\newtheorem{theorem}{Theorem}[section]
\newtheorem{lemma}[theorem]{Lemma}
\newtheorem{proposition}[theorem]{Proposition}
\newtheorem{corollary}[theorem]{Corollary}
\theoremstyle{definition}
\newtheorem{definition}[theorem]{Definition}
\newtheorem{example}[theorem]{Example}
\newtheorem{question}[theorem]{Question}
\newtheorem{remark}[theorem]{Remark}
\newcommand{\End}{\text{End}}
\newcommand{\Fun}{\text{Fun}}
\newcommand{\Pro}{\text{Pro}}
\newcommand{\Ext}{\text{\rm Ext}}
\newcommand{\Hom}{\text{Hom}}
\newcommand{\Ad}{\text{Ad}}
\newcommand{\Comod}{\text{Comod}}
\newcommand{\Ind}{\text{Ind}}
\newcommand{\Rep}{\text{Rep}}
\newcommand{\Spec}{\text{Spec}}
\newcommand{\Corep}{\text{Corep}}
\newcommand{\Vect}{\text{Vec}}
\newcommand{\ot}{\otimes}
\newcommand{\ben}{\begin{enumerate}}
\newcommand{\een}{\end{enumerate}}
\newcommand{\C}{{\mathcal C}}
\newcommand{\Mod}{\mbox{Mod}}
\begin{document}

\title[Module
categories over affine group schemes] {Module categories over affine
group schemes}

\author{Shlomo Gelaki}
\address{Department of Mathematics, Technion-Israel Institute of
Technology, Haifa 32000, Israel} \email{gelaki@math.technion.ac.il}

\date{\today}

\keywords{affine group schemes; $p-$Lie algebras; tensor categories;
module categories; twists; triangular Hopf algebras}

\begin{abstract}
Let $k$ be an algebraically closed field of characteristic $p\ge 0$. Let $G$ be an affine group
scheme over $k$. We classify the indecomposable exact module categories over the rigid
tensor category $\text{Coh}_f(G)$ of coherent sheaves of finite dimensional
$k-$vector spaces on $G$, in terms of $(H,\psi)-$equivariant coherent sheaves on $G$. We deduce from it the
classification of indecomposable {\em geometrical} module categories over $\Rep(G)$. When $G$ is finite, this yields the
classification of {\em all} indecomposable exact module categories over the finite tensor category $\Rep(G)$. In particular, we obtain a
classification of twists for the group algebra $k[G]$ of a finite
group scheme $G$. Applying this to $u(\mathfrak {g})$, where
$\mathfrak {g}$ is a finite dimensional $p-$Lie algebra over $k$
with positive characteristic, produces (new) finite dimensional
noncommutative and noncocommutative triangular Hopf algebras in
positive characteristic. We also introduce and study group scheme theoretical categories, and study isocategorical finite group
schemes.
\end{abstract}

\maketitle

\section{introduction}

Let $k$ be an algebraically closed field of characteristic $p\ge 0$.
Let $G$ be a finite group. Consider the fusion category $\Vect(G)$ of finite dimensional $G-$graded vector spaces over $k$, and the finite tensor category $\Rep(G)$
of finite dimensional representations of $G$ over $k$. Etingof and Ostrik classified the indecomposable exact
module categories over $\Rep(G)$ \cite{EO}, generalizing the classification of
Ostrik in zero characteristic \cite{O}. Alternatively, one could use the duality between $\Vect(G)$ and $\Rep(G)$ (provided by the usual fiber functor on $\Rep(G)$) and the classification of the indecomposable exact
module categories over $\Vect(G)$ to obtain the same result. In particular, the
classification of the semisimple module categories of rank $1$ provides
the
classification of twists for the group algebra $k[G]$, reproducing the classification
given by Movshev in zero characteristic \cite{Mov}. The
classification of twists for finite groups, together with Deligne's
theorem \cite{DE}, enabled Etingof and the author to classify
triangular semisimple and cosemisimple Hopf algebras over $k$ \cite{EG} (see
also \cite{G}).

The goal of this paper is to extend the classification of Etingof
and Ostrik mentioned above to \emph{finite group schemes} $G$ over
$k$, and in particular thus obtain (new) finite dimensional
noncommutative and noncocommutative triangular Hopf algebras in
positive characteristic by twisting $k[G]$. However, in absence of
Deligne's theorem in positive characteristic, the classification of
finite dimensional triangular Hopf algebras in positive
characteristic remains out of reach.

Let $G$ be a finite group scheme over $k$. The idea is to first classify the indecomposable exact module
categories over $\Rep(k[G]^*)$, where $k[G]^*$ is the dual Hopf
algebra of the group algebra $k[G]$ of $G$, and then use the fact
that they are in bijection with the indecomposable exact module
categories over $\Rep(G)$ \cite{EO} to get the classification of the
latter ones. The reason we approach it in this way is that $k[G]^*$ is just the Hopf
algebra $\mathcal{O}(G)$ representing the group scheme $G$, so
$\Rep(k[G]^*)$ is tensor equivalent to the tensor category
$\text{Coh}_f(G)=\text{Coh}(G)$ of coherent sheaves of \emph{finite dimensional}
$k-$vector spaces on $G$ with the tensor product of convolution of
sheaves, which allows us to use geometrical tools and arguments. For
example, when $G$ is an abstract finite group,
$\text{Coh}(G)=\Vect(G)$. 

In fact, in Theorem \ref{grsch} we
classify the indecomposable exact module categories over
$\text{Coh}_f(G)$, where $G$ is \emph{any} affine group scheme over $k$ (i.e., $G$ is not necessarily finite). The classification is given in terms of $(H,\psi)-$equivariant coherent sheaves on $G$ (see Definition \ref{defequiv}). Since $\text{Coh}_f(G)$ is no longer finite when $G$ is not, the proof requires working with Ind and Pro objects, which makes it technically more involved. Furthermore, when
$G$ is not finite, not all indecomposable exact module categories
over $\Rep(G)$ are obtained from those over $\text{Coh}_f(G)$ (see
Theorem \ref{modg} and Remark \ref{notall}); we refer to  those which are as {\em geometrical}. So the classification of module categories (even fiber functors) over $\Rep(G)$ for infinite affine group schemes $G$ remains unknown (even when $G$ is a linear algebraic group over $\mathbb{C}$). 

In Section \ref{gth} we introduce the class of {\em group scheme theoretical categories}, which extends both $\text{Coh}_f(G)$ and $\Rep(G)$, and generalize to them the results from Sections \ref{cohg} and \ref{repg} mentioned above.

As a consequence of our results, combined with \cite{AEGN, EO}, we obtain in Corollary \ref{twistsfinite} that gauge equivalence classes of twists
for the group algebra $k[G]$ of a \emph{finite} group scheme $G$ over $k$ are
parameterized by conjugacy classes of pairs $(H,J)$, where $H$ is a closed group
subscheme of $G$ and $J$ is a \emph{nondegenerate} twist for $k[H]$ (
just as in the case of abstract finite groups). Furthermore, in Proposition \ref{minond} we show that a twist for $G$ is nondegenerate if and only if it is \emph{minimal} (again, as for abstract finite groups), by showing directly, that is, without using Deligne's theorem, that a quotient of a Tannakian category is also Tannakian (Proposition \ref{quot}). We use this in
Sections \ref{excom}, \ref{explie} to give some examples of twists for $k[A]$ and
$u(\mathfrak {g})$, where $A$ is a finite commutative group scheme
over $k$ with positive characteristic and $\mathfrak {g}$ is a
finite dimensional $p-$Lie algebra over $k$ of positive
characteristic $p$. In particular, applying this to $u(\mathfrak {g})$ yields (new) finite dimensional
noncommutative and noncocommutative triangular Hopf algebras in positive characteristic.

We conclude the paper with Section \ref{isoc} in which we extend \cite{EG1} by giving the
construction of all finite group schemes which are isocategorical to
a fixed finite group scheme. In particular, it follows that two isocategorical finite group schemes are necessarily isomorphic as schemes (but not as groups \cite{EG1}, \cite{Da1, Da2}).


{\bf Acknowledgments.} The author is grateful to Pavel Etingof for stimulating and helpful discussions.

The research was partially supported by The Israel Science
Foundation (grants No. 317/09 and 561/12).

\section{Preliminaries}

Throughout the paper we fix an algebraically closed field $k$ of
characteristic $p\ge 0$.

\subsection{Affine group schemes}\label{gsch} Let $G$ be an affine group scheme over $k$, with unit morphism $e:\Spec(k)\to G$, inversion morphism
$i:G\to G$, and multiplication morphism $m:G\times G\to G$,
satisfying the usual group axioms. Let $\mathcal{O}(G)$ be the coordinate algebra of $G$, i.e., $\mathcal{O}(G)$ is a commutative Hopf algebra, and
$G=\Spec_k(\mathcal{O}(G))$. In other words, we are given a collection of group structures on the sets $G(R):=\Hom_{Alg}(\mathcal{O}(G),R)$ of $R-$valued points of $G$, where $R$ is a commutative algebra over $k$, which is functorial in $R$. (See, e.g., \cite{JJ} for the general theory of group schemes.)

A {\em closed} group subscheme of $G$ is, by definition, an affine group scheme $H$ whose coordinate algebra $\mathcal{O}(H)$ is a quotient of $\mathcal{O}(G)$ by a radical Hopf ideal $\mathcal{I}(H)$: $\mathcal{O}(H)=\mathcal{O}(G)/\mathcal{I}(H)$. The ideal $\mathcal{I}(H)$ is referred to as the {\em defining ideal} of $H$ in $\mathcal{O}(G)$. For example, the
{\em connected component of the identity} in $G$ is a closed group subscheme of $G$, denoted by $G^0$. Let
$\pi_0(G):=G/G^0$. Then $\mathcal{O}(\pi_0(G))$ is the unique
maximal finite dimensional semisimple Hopf subalgebra of
$\mathcal{O}(G)$. 

We let $\Rep(G)$ denote the category of finite dimensional rational representations of $G$ over $k$ (equivalently, $\Rep(G)$ is the category of finite dimensional comodules over $\mathcal{O}(G)$); it is a
\emph{symmetric rigid tensor category}. (See, e.g., \cite{e} for the definition of a tensor category
and its general theory.)

\subsubsection{Finite group schemes.} An affine group scheme $G$ is called \emph{finite} if
$\mathcal{O}(G)$ is finite dimensional. In this case,
$\mathcal{O}(G)^*$ is a finite dimensional cocommutative Hopf
algebra, which is called the \emph{group algebra} of $G$, and
denoted by $k[G]$. In particular, $\Rep(G)$ is a \emph{finite}
symmetric tensor category and $\Rep(G)=\Rep(k[G])$ as symmetric
tensor categories. A finite group scheme $G$ is called
\emph{constant} if its representing Hopf algebra $\mathcal{O}(G)$ is
the Hopf algebra of functions on some finite
abstract group with values in $k$, and is called \emph{etale} if $\mathcal{O}(G)$ is
semisimple. Since $k$ is algebraically closed, it is known that $G$
is etale if and only if it is a constant group scheme \cite{W}. A
finite group scheme $G$ is called \emph{infinitesimal} if
$\mathcal{O}(G)$ is a local algebra.

\begin{theorem} (See \cite[6.8, p.52]{W})
Let $G$ be a finite group scheme. Then $\pi_0(G)$ is etale, $G^0$ is
infinitesimal, and $G$ is a semidirect product $G = G^0\rtimes
\pi_0(G)$. If the characteristic of $k$ is $0$ then $G=\pi_0(G)$ is etale.
\end{theorem}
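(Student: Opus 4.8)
The plan is to read the entire structure off the decomposition of the finite-dimensional commutative algebra $\mathcal{O}(G)$ into local factors, and then to produce the splitting by exploiting that $k$ is algebraically closed. Since $\mathcal{O}(G)$ is a finite-dimensional commutative $k$-algebra and $k=\bar k$, it is a finite product $\mathcal{O}(G)=\prod_{x\in G(k)}A_x$ of local algebras with residue field $k$, indexed by the finite abstract group $\Gamma:=G(k)=\Hom_{Alg}(\mathcal{O}(G),k)$. The span of the primitive idempotents is the maximal semisimple (hence etale) Hopf subalgebra $\mathcal{O}(\pi_0(G))\cong k^{\Gamma}$, so $\pi_0(G)$ is etale and, as $k=\bar k$, is the constant group scheme on $\Gamma$. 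Writing $A_e$ for the local factor containing $e$, we get $\mathcal{O}(G^0)=A_e$ local, so $G^0$ is infinitesimal with $G^0(k)=\{e\}$. I would check that $G^0$ is a closed normal subgroup scheme by a connectedness argument: $m$ sends the connected scheme $G^0\times G^0$ into the component of $m(e,e)=e$ and $i$ sends $G^0$ into the component of $e$, while conjugation by any point fixes $e$ and hence preserves the identity component.

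The crux is the homomorphic section $s\colon\pi_0(G)\to G$ of the projection $p$. Here I would use that a homomorphism out of a constant group scheme is the same datum as a homomorphism of abstract groups on $k$-points: a $k$-algebra map $\mathcal{O}(G)\to k^{\Gamma}$ is exactly a $\Gamma$-indexed family of $k$-points, and it is a Hopf map precisely when this family is a group homomorphism $\Gamma\to G(k)$. Since $G^0(k)=\{e\}$, the map $p$ induces an isomorphism $G(k)\cong\pi_0(G)(k)=\Gamma$ on $k$-points, and I take $s$ to be the morphism corresponding to its inverse $\Gamma\to G(k)$. Then $p\circ s$ induces the identity on $k$-points of the constant group scheme $\pi_0(G)$ and hence equals $\id$, so the sequence $1\to G^0\to G\to\pi_0(G)\to1$ splits.

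Given $s$ and the normality of $G^0$, the map $\mu\colon G^0\rtimes\pi_0(G)\to G$, $(h,\gamma)\mapsto h\,s(\gamma)$, is a homomorphism of group schemes, and I would verify it is an isomorphism on coordinate rings: left translation by $x\in G(k)$ identifies each $A_x$ with $A_e$, whence $\mathcal{O}(G)\cong A_e\otimes k^{\Gamma}=\mathcal{O}(G^0\times\pi_0(G))$, which is exactly the comorphism of $\mu$. This gives $G=G^0\rtimes\pi_0(G)$. Finally, when $\mathrm{char}\,k=0$ I invoke Cartier's theorem that every commutative Hopf algebra over a field of characteristic $0$ is reduced; a reduced finite-dimensional algebra over $k=\bar k$ is $k^{\Gamma}$, so $A_e=k$, $G^0$ is trivial, and $G=\pi_0(G)$ is etale.

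The main obstacle is the section $s$. The two ``infinitesimal $\times$ etale'' identifications and the normality of $G^0$ are soft consequences of the component decomposition, but the splitting genuinely uses $k=\bar k$ (over a non-closed field $\pi_0(G)$ need not be constant, and no splitting need exist). The delicate points are that an abstract-group section on $k$-points really arises from a morphism of group schemes, and that the identity $p\circ s=\id$ may be checked on $k$-points; both rest on representability of the constant group scheme $\Gamma$ by $k^{\Gamma}$.
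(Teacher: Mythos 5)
The paper gives no proof of this statement---it is background quoted verbatim from Waterhouse \cite[6.8]{W}---so there is nothing internal to compare against; your argument is the standard textbook proof and is essentially correct. The decomposition $\mathcal{O}(G)=\prod_{x\in G(k)}A_x$ into local factors, the identification of the span of the primitive idempotents with $\mathcal{O}(\pi_0(G))\cong k^{\Gamma}$, the construction of the section $s$ from the bijection $G(k)\cong\pi_0(G)(k)$ via the universal property of the constant group scheme, and the Cartier reducedness argument in characteristic $0$ are all the right steps and are carried out correctly. The only place where your sketch is thinner than it should be is the normality of $G^0$: ``conjugation by any point fixes $e$ and hence preserves the identity component'' must be checked functorially in $R$-points, not just on $k$-points; the clean way is to note that the composite $G\times G^0\to G\to\pi_0(G)$ factors through $\pi_0(G\times G^0)\cong\pi_0(G)\times\pi_0(G^0)=\pi_0(G)$ (using that $\pi_0$ commutes with products over a perfect field) and that the resulting map of constant group schemes is determined by, hence equal to, its value on $k$-points, where it is constantly $e$. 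With that repair the proof is complete.
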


Let $G$ be a finite \emph{commutative} group scheme over $k$, i.e.,
$\mathcal{O}(G)$ is a finite dimensional commutative and
cocommutative Hopf algebra. In this case, $k[G]$ is also a finite dimensional
commutative and
cocommutative Hopf algebra, so it represents a finite commutative
group scheme $G^D$ over $k$, which is called the \emph{Cartier dual}
of $G$. For example, the Cartier dual of the group scheme $\alpha_p$
(= the Frobenius kernel of the additive group $\mathbb{G}_a$) is
$\alpha_p$, while the Cartier dual of $\mu_p$ (= the Frobenius
kernel of the multiplicative group $\mathbb{G}_m$) is the constant
group scheme $\mathbb{Z}/p\mathbb{Z}$.

\begin{theorem} (See \cite[6.8, p.52]{W})
Let $G$ be a finite commutative group scheme over $k$. Then
$G=G_{ee}\times G_{ec}\times G_{ce}\times G_{cc}$ decomposes
canonically as a direct product of four finite commutative group
schemes over $k$ of the following types: $G_{ee}$ is etale with
etale dual (i.e., an abstract abelian group $A$ such that $p\nmid |A|$),
$G_{ec}$ is etale with connected dual (e.g.,
$\mathbb{Z}/p\mathbb{Z}$), $G_{ce}$ is connected with etale dual
(e.g., $\mu_p$), and $G_{cc}$ is connected with connected dual
(e.g., $\alpha_p\cong \alpha_p^D$).
\end{theorem}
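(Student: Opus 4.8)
The plan is to assemble the four-fold decomposition from two orthogonal binary splittings: the connected--\'etale splitting of $G$ itself, and the connected--\'etale splitting of the Cartier dual $G^D$, which, transported back through duality, becomes the multiplicative--unipotent splitting of $G$. Applying both splittings and checking that they are mutually compatible produces exactly four factors, one for each combination of ``type of $G$'' and ``type of $G^D$''.

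First I would split off the connected and \'etale parts of $G$. By the preceding theorem the connected--\'etale sequence $1 \to G^0 \to G \to \pi_0(G) \to 1$ splits as a semidirect product; since $G$ is commutative this semidirect product is a direct product, so $G = G^0 \times \pi_0(G)$ with $G^0$ connected (infinitesimal) and $\pi_0(G)$ \'etale. Because $k$ is algebraically closed, hence perfect, the reduced subscheme $G_{\mathrm{red}}$ is a closed subgroup scheme realizing $\pi_0(G)$, so this splitting is \emph{canonical}, not merely valid up to isomorphism.

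Next I would repeat this on the dual side. Since $G$ is finite commutative, so is $G^D$; applying the first step to $G^D$ gives $G^D = (G^D)^0 \times \pi_0(G^D)$, and dualizing, using $(A\times B)^D \cong A^D \times B^D$ together with $G^{DD}\cong G$, yields a second canonical direct product $G = M \times U$, where $M := \pi_0(G^D)^D$ has \'etale Cartier dual and $U := ((G^D)^0)^D$ has connected Cartier dual. Equivalently, I would apply this multiplicative--unipotent splitting separately to the two factors $\pi_0(G)$ and $G^0$ from the first step. Writing $\pi_0(G) = G_{ee} \times G_{ec}$ and $G^0 = G_{ce} \times G_{cc}$, and using that a closed subgroup scheme of an \'etale group scheme is again \'etale, while a closed subgroup scheme of $G^0$ is connected since $\mathcal{O}(G^0)$ is local and $G^0$ is therefore a single topological point, I obtain the asserted factors: $G_{ee}$ \'etale with \'etale dual, $G_{ec}$ \'etale with connected dual, $G_{ce}$ connected with \'etale dual, $G_{cc}$ connected with connected dual, giving $G = G_{ee}\times G_{ec}\times G_{ce}\times G_{cc}$.

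The main obstacle will be checking that the two canonical splittings are genuinely compatible, so that refining one by the other produces a bona fide four-fold product rather than a mere filtration; this amounts to verifying that the canonical idempotent decompositions of $\mathcal{O}(G)$ induced by $G^0$ and by $M$ commute, which follows from the canonicity, and hence functoriality, of both constructions over the perfect field $k$. The examples $\mathbb{Z}/p\mathbb{Z}$ (\'etale, with connected dual $\mu_p$), $\mu_p$ (connected, with \'etale dual $\mathbb{Z}/p\mathbb{Z}$), and $\alpha_p\cong\alpha_p^D$ (connected with connected dual) then illustrate the three genuinely positive-characteristic types, while the \'etale-with-\'etale-dual type is precisely an abstract abelian group of order prime to $p$.
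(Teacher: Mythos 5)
The paper does not prove this statement: it is quoted verbatim from Waterhouse \cite[6.8]{W} as a preliminary, so there is no internal argument to compare against. Your reconstruction is the standard textbook proof (connected--\'etale splitting over a perfect field, applied to both $G$ and $G^D$ and transported back through Cartier duality), and it is correct: the splitting of the connected--\'etale sequence via $G_{\mathrm{red}}$, the reduction of the semidirect product to a direct product by commutativity, the compatibility of duality with products, and the observation that direct factors of a connected (resp.\ \'etale) finite group scheme are again connected (resp.\ \'etale) are exactly the ingredients needed, and you supply each of them. The compatibility issue you flag at the end is handled automatically by your own device of applying the second splitting factorwise to the first, so nothing further is required.
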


Recall that a finite commutative group scheme $G$ is called
\emph{diagonalizable} if $\mathcal{O}(G)$ is the group algebra
$k[A]$ of a finite abelian group $A$. For example, $\mu_n$ is
diagonalizable since $\mathcal{O}(\mu_n)=k[\mathbb{Z}/n\mathbb{Z}]$.
In fact, any diagonalizable finite group scheme $G$ is a direct
product of various $\mu_n$. Clearly, the group algebra of a finite
diagonalizable group scheme is semisimple.

\begin{theorem}(Nagata, see \cite[p.223]{A})\label{ss}
Let $G$ be a finite group scheme over $k$. The group algebra $k[G]$ is
semisimple if and only if $G^0$ is diagonalizable and $p$ does not
divide the order of $G(k)$. In particular, for an infinitesimal
group scheme $G$, if $k[G]$ is semisimple then $G$ is
diagonalizable.
\end{theorem}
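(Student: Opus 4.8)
The plan is to split $G$ into its infinitesimal and étale parts via the connected-étale decomposition, reduce $k[G]$ to a skew group algebra, and then settle each part by a Maschke-type criterion. First I would use the structure theorem $G = G^0 \rtimes \pi_0(G)$: on group algebras this identifies $k[G]$ with the skew group algebra $k[G^0] \ast \Gamma$, where $\Gamma := \pi_0(G)(k)$ acts on $k[G^0]$ by conjugation. Since $k$ is algebraically closed, $\pi_0(G)$ is a constant group scheme, so $\Gamma$ is an ordinary finite group; and because $G^0$ is infinitesimal (so $G^0(k)$ is trivial) we have $\Gamma \cong G(k)$. By Maschke's theorem for skew group algebras, $k[G^0]\ast\Gamma$ is semisimple if and only if $k[G^0]$ is semisimple and $|\Gamma| = |G(k)|$ is invertible in $k$, i.e.\ $p \nmid |G(k)|$. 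This reduces everything to the infinitesimal statement: $k[G^0]$ is semisimple if and only if $G^0$ is diagonalizable.

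One implication is already recorded in the excerpt (the group algebra of a diagonalizable finite group scheme is semisimple), so the content is the forward direction. Here I would descend to Frobenius kernels. Let $G_1 \subseteq G^0$ be the first Frobenius kernel, so that $k[G_1] = u(\g)$ is the restricted enveloping algebra of $\g := \Lie(G^0)$. Since $k[G^0]$ is free over the Hopf subalgebra $k[G_1]$ (Nichols--Zoeller), I would argue that semisimplicity descends, so $u(\g)$ is semisimple; and a direct computation with the PBW basis shows that $u(\g)$ is semisimple exactly when $\g$ is a torus, i.e.\ abelian with bijective $p$-operation $x \mapsto x^{[p]}$. Such a $\g$ forces $G_1 \cong \mu_p^{\times n}$, which is diagonalizable.

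To complete the infinitesimal case I would induct on the height of $G^0$. The quotient $G^0/G_1$ has strictly smaller height and $k[G^0/G_1]$ is a quotient Hopf algebra of the semisimple algebra $k[G^0]$, hence is semisimple; by induction $G^0/G_1$ is diagonalizable. Thus $G^0$ is an extension of a diagonalizable group scheme by a diagonalizable one, and the remaining structural point is that a linearly reductive infinitesimal group scheme arising in this way is of multiplicative type. Over an algebraically closed field infinitesimal groups of multiplicative type are precisely the diagonalizable ones, so $G^0$ is commutative and diagonalizable; this also yields the \emph{in particular} clause.

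The hard part is the genuinely positive-characteristic input: showing that semisimplicity of $k[G^0]$ excludes any unipotent ($\alpha_p$- or Witt-type) constituent and hence forces multiplicative type. The height-one instance is the crucial calculation, since a single non-toral element of $\g$ generates a copy of $k[t]/(t^p)$ inside $u(\g)$ and thus a nilpotent, obstructing semisimplicity; propagating this through the height induction, and carefully verifying that semisimplicity really does descend along the skew-group and Frobenius-kernel extensions (the forward directions of the relevant Maschke theorems), is where the real work lies. By comparison the étale part is just the classical Maschke theorem for the finite group $G(k)$.
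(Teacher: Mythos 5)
The paper does not prove this statement at all: it is quoted as a classical theorem of Nagata with a reference to Abe's book, so there is no in-paper argument to compare yours against. Your outline follows the standard route (connected--\'etale decomposition, Maschke for the \'etale part, Frobenius-kernel induction for the infinitesimal part), which is essentially the proof found in the literature, but as written it has three concrete soft spots.

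First, the ``only if'' half of your Maschke step is not a general fact about skew group algebras: $\Fun(\Gamma)\ast\Gamma\cong\Mat_{|\Gamma|}(k)$ (translation action) is semisimple even when $p\mid|\Gamma|$. What saves you here is that the conjugation action fixes the augmentation of $k[G^0]$, so $k[\Gamma]$ is a quotient algebra of $k[G]$ and classical Maschke applies to it; this needs to be said. Second, and more seriously, ``a single non-toral element of $\g$ generates a copy of $k[t]/(t^p)$ and thus a nilpotent, obstructing semisimplicity'' is wrong on both counts: a non-toral element can be semisimple (take $\lambda h$ with $h$ toral and $\lambda\in k\setminus\mathbb{F}_p$), and the mere presence of nilpotent elements never obstructs semisimplicity of an algebra ($\Mat_2(k)$ is full of them). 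The correct mechanism is that a non-torus contains a nonzero $p$-nilpotent element $x$ (this uses Jordan decomposition and the fact that a restricted Lie algebra all of whose elements are semisimple is a torus), that $x$ generates the local Hopf subalgebra $u(kx)\cong k[t]/(t^p)$, and that semisimplicity descends to Hopf subalgebras --- not because of freeness alone (a semisimple algebra can be free over a non-semisimple subalgebra), but because Nichols--Zoeller freeness makes restriction preserve projectives, the trivial module is projective over a semisimple Hopf algebra, and a Hopf algebra whose trivial module is projective is semisimple. Third, your induction terminates with ``$G^0$ is an extension of a diagonalizable group scheme by a diagonalizable one,'' and the assertion that such an extension is again diagonalizable is precisely the remaining content of the infinitesimal case; it is true (infinitesimality forces the conjugation action and the commutator pairing to be trivial, and one then dualizes to extensions of \'etale abelian $p$-groups), but it is a theorem, not a formality, and as stated your argument simply assumes it.
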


\subsection{$p-$Lie algebras}\label{plie}
Assume that the ground field $k$ has characteristic $p>0$. Let
$\mathfrak{g}$ be a finite dimensional $p-$Lie algebra over $k$ and
let $u(\mathfrak{g})$ be its $p-$restricted universal enveloping
algebra (see, e.g., \cite{J}, \cite{SF}). Then $u(\mathfrak{g})$ is
a cocommutative Hopf algebra of dimension $p^{\dim(\mathfrak{g})}$
and its dual (commutative) Hopf algebra $u(\mathfrak{g})^*$ is a
local algebra satisfying $x^p=0$ for any $x$ in the augmentation
ideal of $u(\mathfrak{g})^*$. Recall that there is an equivalence of
categories between the category of infinitesimal group schemes
$G$ over $k$ of \emph{height $1$} and the category of finite dimensional
$p-$Lie algebras $\mathfrak{g}$ over $k$, given by
$G\mapsto\mathfrak{g}$, where $k[G]=u(\mathfrak{g})$.

An $n-$dimensional \emph{torus} is an $n-$dimensional abelian
$p-$Lie algebra $\mathfrak{t}$ over $k$ with a basis consisting of
\emph{toral} elements $h_i$ (i.e, $h_i^{p}=h_i$). By a theorem of
Hochschild (see Theorem \ref{ss}), tori are precisely those
finite dimensional $p-$Lie algebras whose representation categories
are semisimple (see \cite{SF}). In other words, $u(\mathfrak{t})$ is
a semisimple commutative (and cocommutative) Hopf algebra, and
$\Rep(\mathfrak{t})=\Rep(u(\mathfrak{t}))$ is a fusion category.
Moreover, it is known that $u(\mathfrak{t})$ is isomorphic to the
Hopf algebra $\Fun((\mathbb{Z}/p\mathbb{Z})^n,k)$ of functions on the
elementary abelian $p-$group of rank $n$, so
$\Rep(u(\mathfrak{t}))=\Vect((\mathbb{Z}/p\mathbb{Z})^n)$ is the fusion
category of finite dimensional $(\mathbb{Z}/p\mathbb{Z})^n-$graded vector
spaces.

\subsection{Module categories over tensor categories.}
Let $\mathcal{C}$ be a rigid tensor category over $k$, i.e., a
$k-$linear locally finite abelian category,
equipped with an associative tensor product, a unit object and a rigid structure (see, e.g., \cite{e}). In particular, every object in $\C$ has {\em finite length}.

Let $\Ind(\mathcal{C})$ and $\Pro(\mathcal{C})$ denote the categories of
$\Ind-$objects and $\Pro-$objects of $\mathcal{C}$, respectively (see, e.g., \cite{KS}). The rigid structure on $\mathcal{C}$ induces two duality functors $\Pro(\mathcal{C})\to\Ind(\mathcal{C})$ (``continuous dual") and $\Ind(\mathcal{C})\to \Pro(\mathcal{C})$ (``linear dual"), which we shall both denote by $X\mapsto X^*$; they are antiequivalence inverses of each other.

It is well known that the tensor
structure on $\mathcal{C}$ extends to a tensor structure on
$\Ind(\mathcal{C})$ and $\Pro(\mathcal{C})$ (however, $\Ind(\mathcal{C})$ and $\Pro(\mathcal{C})$ are not rigid). It
is also known that $\Ind(\mathcal{C})$ has enough injectives. More
generally, recall that a (left) \emph{module category} over $\mathcal{C}$ is a locally finite abelian category $\mathcal{M}$ over $k$ equipped with a (left) action $\ot ^{\mathcal{M}}:\mathcal{C}\boxtimes \mathcal{M}\to \mathcal{M}$, such that the
bifunctor $\ot ^{\mathcal{M}}$ is bilinear on morphisms and biexact.
Similarly, the $\mathcal{C}-$module structure on $\mathcal{M}$ extends to a module structure on $\Ind(\mathcal{M})$ over $\Ind(\mathcal{C})$.

One can define a \emph{dual internal Hom} in a $\mathcal{C}-$module
category $\mathcal{M}$ as follows: for $M_1,M_2\in \mathcal{M}$, let
$\overline{\Hom}(M_1,M_2)\in \Pro(\mathcal{C})$ be the
pro-object representing the left exact functor $$\mathcal{C}\to
\Vect,\,X\mapsto \Hom_{\mathcal{M}}(M_2,X\ot^{\mathcal{M}} M_1),$$ i.e.,
$$\Hom_{\mathcal{M}}(M_2,X\ot^{\mathcal{M}} M_1)
\cong\Hom_{\text{Pro}(\mathcal{C})} (\overline{\Hom}(M_1,M_2),X).$$
For any $M\in \mathcal{M}$, the pro-object
$\overline{\Hom}(M,M)$ has a canonical
structure of a coalgebra. If $\mathcal{M}$ is {\em indecomposable} (i.e., $\mathcal{M}$ is not equivalent to a direct sum of two nontrivial module subcategories) and {\em exact} (see definition below) then the category
$\Comod_{\Pro(\mathcal{C})}(\overline{\Hom}(M,M))$ of right
comodules over $\overline{\Hom}(M,M)$ in $\Pro(\mathcal{C})$,
equipped with its canonical structure of a $\mathcal{C}-$module
category, is equivalent to $\mathcal{M}$. (This is a special case of Barr-Beck Theorem in category theory; see \cite[Theorem 3.17]{EO}.). We note that in terms of
internal Hom's \cite{EO}, the algebra $\underline{\Hom}(M,M)$
in $\Ind(\mathcal{C})$ is isomorphic to the dual algebra of the coalgebra
$\overline{\Hom}(M,M)$ under the duality functor $^*:\Pro(\mathcal{C})\to\Ind(\mathcal{C})$.

\begin{example}\label{hopf}
Let $H$ be a Hopf algebra over $k$, let $\C:=\Rep(H)$ be the rigid tensor category of finite dimensional representations of $H$ over $k$, let $\mathcal{M}:=\Vect$ be the module category over $\C$ with respect to the forgetful functor $\C\to \Vect$ (equipped with the usual tensor structure), and let $\delta:=k$ be the trivial representation. Then $\underline{\Hom}(\delta,\delta)=H^{\circ}$ is the {\em finite dual} of $H$ (i.e., the Hopf algebra of linear functionals on $H$ vanishing on a finite codimensional ideal of $H$). Indeed, let $X\in \C$ and denote its underlying vector space by $\overline{X}$. On one hand, $$\Hom_{\Ind(\mathcal{C})}(X,\underline{\Hom}(k,k))=\Hom_{\Vect}(X\ot k,k)=\Hom_{\Vect}(\overline{X},k)=
\overline{X}^*.$$ On the other hand, since $X$ is finite dimensional, we have that 
\begin{eqnarray*}
\lefteqn{\Hom_{\Ind(\mathcal{C})}(X,H^{\circ})=\Hom_{\Ind(\mathcal{C})}(X,H^{*})=\Hom_{\Ind(\mathcal{C})}(X\ot H,k)}\\
& & =\Hom_{\Ind(\mathcal{C})}(\overline{X}\ot _k H,k)=\Hom_{\Ind(\mathcal{C})}(H,\overline{X}^*)=\overline{X}^*.
\end{eqnarray*}
Therefore the claim follows from Yoneda's lemma.

Consequently, $\overline{\Hom}(\delta,\delta)=(H^{\circ})^*=\widehat{H}$ is the profinite completion of $H$ with respect to its algebra structure (i.e., $\widehat{H}$ is the inverse limit $\varprojlim H/I$ over all finite codimensional proper ideals $I$ of $H$).  
\end{example}

\begin{definition}(cf. \cite{EO})
A module category $\mathcal{M}$ over
$\mathcal{C}$ is called \emph{exact} if any additive module functor
$\mathcal{M}\to \mathcal{M}_1$ from $\mathcal{M}$ to any other
$\mathcal{C}-$module category $\mathcal{M}_1$ is exact.
\end{definition}

\begin{remark}\label{2cat}
The collection of all exact $\C-$module categories forms a $2-$category $\Mod(\C)$: the $1-$morphisms in $\Mod(\C)$ are $\C-$module functors, and the $2-$morphisms are natural transformations of $\C-$module functors.
\end{remark}

\begin{proposition}\label{exact} (cf. \cite{EO})
Let $\mathcal{C}$ be a rigid tensor category over $k$, and let $\mathcal{M}$ be a module category over $\mathcal{C}$. Then the following are equivalent:

1) $\mathcal{M}$ is exact.

2) For any $M\in \mathcal{M}$
and any injective object $I\in \Ind(\mathcal{C})$, $I\ot M$ is injective in $\Ind(\mathcal{M})$.

3) For any $M\in \mathcal{M}$ and any projective object $P\in \Pro(\mathcal{C})$, $P\ot M$ is projective in $\Pro(\mathcal{M})$.
\end{proposition}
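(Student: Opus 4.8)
The plan is to prove the equivalence of the three conditions by establishing the cycle $1)\Rightarrow 2)\Rightarrow 3)\Rightarrow 1)$, adapting the finite-category argument of Etingof--Ostrik \cite{EO} to the present setting where $\C$ is rigid but not necessarily finite, so that injectives must be sought in $\Ind(\C)$ and projectives in $\Pro(\C)$. The essential tool throughout is the adjunction supplied by the rigid structure: for $X\in\C$, $M\in\mathcal{M}$, and $N$ an object of $\mathcal{M}$ (or $\Ind(\mathcal{M})$), one has natural isomorphisms
\begin{equation*}
\Hom(X\ot M,N)\cong\Hom(M,X^*\ot N),\qquad \Hom(N,X\ot M)\cong\Hom(X^*\ot N,M),
\end{equation*}
coming from the duality functor $X\mapsto X^*$. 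These isomorphisms convert exactness of $-\ot M$ into the defining lifting/extension properties of injective and projective objects.

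For $1)\Rightarrow 2)$, I would fix an injective $I\in\Ind(\C)$ and $M\in\mathcal{M}$ and show that the functor $N\mapsto\Hom_{\Ind(\mathcal{M})}(N,I\ot M)$ is exact. Using the adjunction above, this functor is naturally isomorphic to $N\mapsto\Hom_{\Ind(\mathcal{M})}(I^*\ot N,M)$, where $I^*\in\Pro(\C)$ is the continuous dual of $I$. The key observation is that tensoring by the fixed object $M$, viewed as an additive module functor into the module category $\mathcal{M}$ (or a suitable target module category), is exact precisely because $\mathcal{M}$ is exact; writing $I$ as a filtered colimit of objects of $\C$ and passing to the limit, one reduces the injectivity of $I\ot M$ to the left-exactness of these Hom functors together with the exactness hypothesis. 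Dually, for $2)\Rightarrow 3)$ I would use the antiequivalence $^*:\Pro(\C)\to\Ind(\C)$, which carries projectives in $\Pro(\C)$ to injectives in $\Ind(\C)$ (and commutes appropriately with the module action up to the duality on $\mathcal{M}$), so that the statement about projectivity of $P\ot M$ is the image under this contravariant duality of the statement about injectivity of $P^*\ot M$ already proved in $2)$.

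The implication $3)\Rightarrow 1)$ is where I expect the real work to lie, and it is the main obstacle. Here one must show that if $P\ot M$ is projective in $\Pro(\mathcal{M})$ for every projective $P\in\Pro(\C)$ and every $M$, then every additive module functor $F:\mathcal{M}\to\mathcal{M}_1$ is exact. The strategy is to exhibit each object $M\in\mathcal{M}$ as a retract (direct summand) of $P\ot M'$ for a suitable projective $P\in\Pro(\C)$ and object $M'$; concretely, applying the functor $-\ot M$ to the natural epimorphism from a projective cover of the unit in $\Pro(\C)$, together with the unit constraint $\mathbf{1}\ot M\cong M$, realizes $M$ as a summand of a projective-tensored object. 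A module functor $F$ commutes with the action and with direct summands, so $F$ sends such objects to summands of objects of the form $P\ot F(M')$, which are projective by hypothesis applied in $\mathcal{M}_1$; exactness of $F$ on all of $\mathcal{M}$ then follows from the fact that every object has a resolution by such projective-tensored objects and that $F$ preserves these resolutions. The technical delicacy is to make the colimit/limit manipulations in $\Ind$ and $\Pro$ rigorous—in particular, ensuring that the duality functors, the module action, and the filtered (co)limits interact as expected, and that biexactness of $\ot^{\mathcal{M}}$ is genuinely used to pass between $\C$ and its Ind/Pro completions—since these are exactly the points that are automatic in the finite case treated in \cite{EO} but require care when $\C$ is infinite.
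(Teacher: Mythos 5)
Your overall architecture (a cycle of implications, with the passage between 2) and 3) handled by the duality between $\Pro(\mathcal{C})$ and $\Ind(\mathcal{C})$) is reasonable, and the duality step agrees with what the paper does. But both substantive implications in your proposal have genuine gaps.

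For $1)\Rightarrow 2)$: rewriting $\Hom_{\Ind(\mathcal{M})}(N,I\ot M)$ as $\Hom_{\Pro(\mathcal{M})}(I^*\ot N,M)$ is only a restatement of the problem. The surjectivity you need, of $\Hom(I^*\ot N,M)\to\Hom(I^*\ot N',M)$ for a monomorphism $N'\hookrightarrow N$, translates back through the very same adjunction into the extension property that defines injectivity of $I\ot M$, so no progress has been made; and your ``key observation'' that $-\ot M$ is exact ``because $\mathcal{M}$ is exact'' conflates functors \emph{into} $\mathcal{M}$ (exact simply by biexactness of the action, no hypothesis needed) with functors \emph{out of} $\mathcal{M}$, which is where exactness of $\mathcal{M}$ actually has content. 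The mechanism the paper uses, and which your argument is missing, is the dual internal Hom: $\Hom_{\Ind(\mathcal{M})}(?,I\ot M)\cong\Hom_{\Ind(\mathcal{C})}(\overline{\Hom}(M,?),I)$, where $\overline{\Hom}(M,?):\mathcal{M}\to\Pro(\mathcal{C})$ is an additive $\mathcal{C}$-module functor out of $\mathcal{M}$ --- hence exact by hypothesis 1) --- and $\Hom_{\Ind(\mathcal{C})}(-,I)$ is exact because $I$ is injective.

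For $3)\Rightarrow 1)$: the claim that every $M\in\mathcal{M}$ is a retract of $P\ot M'$ with $P\in\Pro(\mathcal{C})$ projective is false. A retract of a projective object is projective, so this would force every object of $\mathcal{M}$ to be projective in $\Pro(\mathcal{M})$; that already fails for the regular module category $\mathcal{M}=\mathcal{C}=\Rep(G)$ with $G$ a finite $p$-group in characteristic $p$, where the trivial module is not projective although $\mathcal{M}$ is exact. The epimorphism $P(\mathbf{1})\ot M\to M$ does not split in general. Your fallback --- that $F$ preserves resolutions by projective-tensored objects --- also does not work, since an additive functor need not preserve exactness of an arbitrary resolution; that would presuppose what is being proved. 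The point you are missing (this is the argument of \cite[Proposition 3.11]{EO}, which the paper invokes, in its dual form, for $2)\Rightarrow 1)$) is that after tensoring a short exact sequence $0\to M'\to M\to M''\to 0$ with a projective $P\in\Pro(\mathcal{C})$, the resulting sequence is \emph{split} exact, because its quotient term $P\ot M''$ is projective by hypothesis 3); additive module functors preserve split exact sequences, giving exactness of $0\to P\ot F(M')\to P\ot F(M)\to P\ot F(M'')\to 0$, and one then descends from $P\ot F(-)$ to $F(-)$ using the epimorphism $P(\mathbf{1})\to\mathbf{1}$ together with the exactness and faithfulness of $P(\mathbf{1})\ot -$.
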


\begin{proof}
The proof that 1) implies 2) is exactly as
the proof of \cite[Proposition 3.16]{EO} (after replacing
``projective" by ``injective" and $``\underline{\Hom}"$ by
$``\overline{\Hom}"$). More precisely, if $\mathcal{M}$ is exact then the $\mathcal{C}-$module functor
$\overline{Hom}(M,?):\mathcal{M}\to \Pro(\mathcal{C})$ is exact. Therefore the functor $$\Hom_{\Ind(\mathcal{M})}(?,I\ot M)=\Hom_{\Ind(\mathcal{C})}(\overline{Hom}(M,?),I)$$ is exact for any injective object $I$ in $\Ind(\mathcal{C})$, so $I\ot M$ is injective in $\Ind(\mathcal{M})$. (Here by $\Hom_{\Ind(\mathcal{C})}(\overline{Hom}(M,?),I)$ we mean the $\Hom-$space $\Hom_{\Ind(\mathcal{C})}(\mathbf{1},\overline{Hom}(M,?)^*\otimes I)$.)

The proof that 2) implies 1) is exactly as the proof of
\cite[Proposition 3.11]{EO} (after replacing ``projective" by
``injective").

Finally, 2) is equivalent to 3) by duality.
\end{proof}

Let $\mathcal{C}$ be a rigid tensor category over $k$, and let
$\mathcal{M}$ be a module category over $\mathcal{C}$. Following
\cite{EO}, we say that two simple objects $M_1,M_2\in \mathcal{M}$
are \textit{related} if there exists an object $X\in \mathcal{C}$ such that
$M_1$ appears as a subquotient in $X\ot M_2$.

\begin{proposition} (\cite{EO})
Let $\mathcal{C}$ be a rigid tensor category over $k$ and let
$\mathcal{M}$ be an exact module category over $\mathcal{C}$. Then
the following hold:

1) The above relation is an equivalence relation.

2) $\mathcal{M}$
decomposes into a direct sum $\mathcal{M}=\oplus \mathcal{M}_i$ of
indecomposable exact module subcategories indexed by the equivalence
classes of the above relation.
\end{proposition}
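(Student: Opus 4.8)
The plan is to prove the two statements about the relation ``$M_1 \sim M_2$ iff $M_1$ appears as a subquotient of $X \ot M_2$ for some $X \in \mathcal{C}$'' on simple objects, and then the resulting block decomposition. First I would verify that the relation is an equivalence relation. Reflexivity is immediate by taking $X = \mathbf{1}$, since $M_2$ is a subquotient of $\mathbf{1} \ot M_2 \cong M_2$. For symmetry, suppose $M_1$ is a subquotient of $X \ot M_2$. The key idea is to use rigidity: since every object of $\mathcal{C}$ has a dual, I would show that if $M_1$ occurs in $X \ot M_2$ then $M_2$ occurs in $X^* \ot M_1$. Concretely, a nonzero map witnessing $M_1$ as a subquotient gives, after passing to the adjunction $\Hom_{\mathcal{M}}(X \ot M_2, N) \cong \Hom_{\mathcal{M}}(M_2, X^* \ot N)$ provided by the rigid structure, a nonzero morphism placing $M_2$ inside $X^* \ot M_1$ (up to subquotients). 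For transitivity, if $M_1$ is a subquotient of $X \ot M_2$ and $M_2$ is a subquotient of $Y \ot M_3$, then $X \ot M_2$ is a subquotient of $X \ot (Y \ot M_3) \cong (X \ot Y) \ot M_3$ by exactness of the tensor action (biexactness of $\ot^{\mathcal{M}}$), so $M_1$ is a subquotient of $(X \ot Y) \ot M_3$ with witnessing object $X \ot Y \in \mathcal{C}$.

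Once the relation is an equivalence relation, I would define, for each equivalence class $c$, the full subcategory $\mathcal{M}_c \subseteq \mathcal{M}$ consisting of objects all of whose simple subquotients lie in the class $c$. The main task for part 2) is to check that each $\mathcal{M}_c$ is a $\mathcal{C}$-module subcategory, that these are closed under the action, and that $\mathcal{M}$ decomposes as their direct sum. Closure under the action is exactly the content of the relation: if $M \in \mathcal{M}_c$ and $X \in \mathcal{C}$, then every simple subquotient $S$ of $X \ot M$ is related to some simple subquotient of $M$ (which lies in $c$), hence $S \in c$ by the definition of the relation together with transitivity, so $X \ot M \in \mathcal{M}_c$. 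I would then argue that the $\mathcal{M}_c$ are abelian module subcategories, using that $\mathcal{M}$ is locally finite so every object has finite length and its composition factors partition among the classes.

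The heart of the decomposition is to show $\mathcal{M} = \bigoplus_c \mathcal{M}_c$, i.e. that every object splits canonically as a direct sum of pieces supported on distinct classes. Here I expect to use exactness of $\mathcal{M}$ in an essential way. The natural approach is a Serre-subcategory/idempotent argument: since distinct classes $c \neq c'$ have no related simples, there are no nonzero extensions between a simple in $c$ and a simple in $c'$ that are respected by the module structure, so the filtration by composition factors from different classes splits. More precisely, I would show $\Hom_{\mathcal{M}}(\mathcal{M}_c, \mathcal{M}_{c'}) = 0$ and $\Ext^1_{\mathcal{M}}(\mathcal{M}_c, \mathcal{M}_{c'}) = 0$ for $c \neq c'$; the vanishing of $\Ext^1$ is where exactness enters, guaranteeing that filtrations with factors in different classes are split and yielding the desired direct sum decomposition at the level of objects. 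The indecomposability of each $\mathcal{M}_c$ follows because within a single class any two simples are connected by the action, so $\mathcal{M}_c$ admits no further nontrivial module-categorical splitting.

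The step I expect to be the main obstacle is establishing the $\Ext^1$-vanishing between different classes, equivalently proving that the filtration of an arbitrary object by its class-components actually splits as module objects rather than merely as a filtered object. Symmetry of the relation via rigidity is clean, but the splitting requires genuinely invoking exactness of $\mathcal{M}$ (so that the relevant internal-Hom functors are exact and the obstruction classes vanish), and one must ensure the splitting is compatible with the $\mathcal{C}$-action and not just $k$-linear. Since this proposition is attributed to \cite{EO}, I would ultimately follow their treatment for this part, adapting any finiteness assumptions to the present rigid (possibly non-finite) setting.
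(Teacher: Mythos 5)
Your reflexivity and transitivity arguments are fine, and your outline of part 2 (each $\mathcal{M}_i$ is closed under the action, $\Hom$ and $\Ext^1$ vanish between distinct classes, indecomposability follows from connectedness within a class) is consistent with the paper, which leaves these points implicit. The genuine gap is in your proof of symmetry. You speak of ``a nonzero map witnessing $M_1$ as a subquotient'' of $X\ot M_2$ and feed it into the adjunction $\Hom_{\mathcal{M}}(X\ot M_2,N)\cong\Hom_{\mathcal{M}}(M_2,X^*\ot N)$. But a subquotient is a quotient of a subobject: it is witnessed by a diagram $X\ot M_2\hookleftarrow N'\twoheadrightarrow M_1$, and neither arrow has the whole object $X\ot M_2$ as source or target. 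In general there is no nonzero morphism $X\ot M_2\to M_1$ (that would force $M_1$ into the cosocle) and no nonzero morphism $M_1\to X\ot M_2$ (socle), so the adjunction has nothing to act on; the parenthetical ``up to subquotients'' does not repair this, and it is not even clear that $X^*$ is the correct witnessing object. A telling symptom is that your part 1 never uses the hypothesis that $\mathcal{M}$ is exact, whereas symmetry is exactly where that hypothesis is needed --- rigidity of $\mathcal{C}$ alone is not known to suffice.

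The paper's proof supplies the missing step as follows. Let $E(\mathbf{1})\in\Ind(\mathcal{C})$ be the injective hull of the unit object. Exactness of $\mathcal{M}$ (via the injective/projective characterization of exactness) guarantees that $E(\mathbf{1})\ot X\ot M_2$ is injective in $\Ind(\mathcal{M})$, and one obtains
$$\Hom_{\Pro(\mathcal{M})}(X^*\ot E(\mathbf{1})^*\ot M_1,M_2)=\Hom_{\Ind(\mathcal{M})}(M_1,E(\mathbf{1})\ot X\ot M_2)\ne 0;$$
the nonvanishing holds because $E(\mathbf{1})^*\ot M_1$ is a projective pro-object surjecting onto $M_1$, hence contains the projective cover of $M_1$ as a direct summand, and $\Hom$ from that projective cover into $X\ot M_2$ is nonzero precisely because $M_1$ is a composition factor of $X\ot M_2$. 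Only after this conversion of ``subquotient'' into a genuine nonzero $\Hom$-space does rigidity enter; finally, since $M_2$ has finite length and $X^*\ot E(\mathbf{1})^*\ot M_1$ is a filtered limit of objects $Y\ot M_1$ with $Y\in\mathcal{C}$, the nonzero morphism factors through some such $Y\ot M_1$, exhibiting $M_2$ as a quotient (hence subquotient) of $Y\ot M_1$. You would need to incorporate this tensoring with $E(\mathbf{1})$ (or the dual projective-cover version) for your symmetry step to close; the same mechanism is also what justifies the $\Ext^1$-vanishing you correctly flag as the crux of part 2.
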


\begin{proof}
1) The proof is essentially the proof of \cite[Lemma 3.8 and
Proposition 3.9]{EO}. Namely, the proof that the relation is
reflexive and transitive is exactly the same. Suppose that $M_1$
appears as a subquotient in $X\ot M_2$, and let
$E(\textbf{1})\in\Ind(\mathcal{C})$ be the injective hull of the
unit object $\textbf{1}\in \mathcal{C}$. By Proposition \ref{exact}, $E(\textbf{1})\ot X\ot M_2$ is injective in $\Ind(\mathcal{M})$, and hence
$$\Hom_{\Pro(\mathcal{M})}(X^*\ot E(\textbf{1})^*\ot M_1,M_2)=
\Hom_{\Ind(\mathcal{M})}(M_1,E(\textbf{1})\ot X\ot M_2)\ne 0.$$ This implies the existences of
$Y\in\mathcal{C}$ such that $\Hom_{\mathcal{M}}(
Y\ot M_1,M_2)\ne 0$, which proves that
the relation is also symmetric.

2) For an equivalence class $i$ let $\mathcal{M}_i$ be the
full subcategory of $\mathcal{M}$ consisting of objects all simple
subquotients of which lie in $i$. Clearly, $\mathcal{M}_i$ is an
indecomposable module subcategory of $\mathcal{M}$ and
$\mathcal{M}=\oplus \mathcal{M}_i$. Furthermore, $\mathcal{M}_i$ is
exact since so is $\mathcal{M}$.
\end{proof}

\begin{definition} (cf. \cite{EO})
Let $\mathcal{M}$ be an exact module category over $\mathcal{C}$. We say that an object $\delta\in\mathcal{M}$ generates $\mathcal{M}$ if for
any $M\in \mathcal{M}$ there exists $X\in\mathcal{C}$ such that
$\Hom_{\mathcal{M}}(X\ot ^{\mathcal{M}}\delta,M)\ne 0$. 
\end{definition}

\begin{remark} (cf. \cite{EO})
It is known that $\delta$ generates $\mathcal{M}$ if and only if its simple
subquotients represent all equivalence classes of simple objects in $\mathcal{M}$. Therefore, $\delta$ generates $\mathcal{M}$ if and only if for any $M\in \mathcal{M}$ there exists $X\in\mathcal{C}$ such that
$M$ is a subquotient of $X\ot ^{\mathcal{M}}\delta$.
\end{remark}

\begin{corollary}\label{simgen}
Let $\mathcal{M}$ be an indecomposable exact module category over a
rigid tensor category $\mathcal{C}$, and let $\delta\in \mathcal{M}$
be a simple object. Then $\delta$ generates $\mathcal{M}$. \qed
\end{corollary}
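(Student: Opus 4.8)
The plan is to deduce the statement directly from the structural results established just above. First I would invoke the decomposition Proposition: for an exact module category $\mathcal{M}$ over $\mathcal{C}$, the ``related'' relation on simple objects is an equivalence relation, and $\mathcal{M}$ splits as a direct sum $\mathcal{M}=\oplus_i \mathcal{M}_i$ of indecomposable exact module subcategories indexed by its equivalence classes. Since $\mathcal{M}$ is assumed indecomposable, this sum can have only one summand, so there is exactly one equivalence class; equivalently, any two simple objects of $\mathcal{M}$ are related.

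Next I would use the characterization of generation recorded in the Remark above: an object $\delta\in\mathcal{M}$ generates $\mathcal{M}$ if and only if the simple subquotients of $\delta$ represent every equivalence class of simple objects in $\mathcal{M}$. Because $\delta$ is simple, its only simple subquotient is $\delta$ itself, which lies in the unique equivalence class. Hence the simple subquotients of $\delta$ exhaust all equivalence classes, and $\delta$ generates $\mathcal{M}$.

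The argument is really just a bookkeeping step: all the substantive work has already been carried out, in Proposition \ref{exact} (used to establish symmetry of the relation) and in the decomposition Proposition that follows it. The only point requiring a moment's care is to confirm that ``indecomposable'' in the hypothesis means exactly ``admits no nontrivial splitting into module subcategories,'' matching the sense used in the decomposition statement, so that a single equivalence class is indeed forced. Granting that, there is no genuine obstacle beyond correctly quoting the Remark's reformulation of ``generates''.
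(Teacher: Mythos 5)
Your argument is correct and is exactly the intended one: the paper leaves the corollary as an immediate consequence (marked \qed) of the preceding decomposition Proposition and the Remark characterizing generation via simple subquotients, which is precisely the chain of deductions you spell out. Nothing further is needed.
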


\section{exact module categories over $\text{Coh}_f(G)$}\label{cohg}

Let $G$ be an affine group scheme over $k$ (see Section 2.1).

\subsection{The category $\text{Coh}_f(G)$}

We shall denote by $\text{Coh}_f(G)$ (resp., $\text{Coh}(G)$) the abelian category of coherent sheaves of \emph{finite dimensional} $k-$vector spaces on $G$, i.e., coherent sheaves
supported on finite sets in $G$ (resp., \emph{all} coherent sheaves of $k-$vector spaces on $G$). Recall that $\text{Coh}_f(G)$ (resp., $\text{Coh}(G)$) is a rigid tensor category (resp., tensor category) with the convolution product
$$X\otimes Y:=m_*(X\boxtimes Y)$$ as the tensor product
(where $m_*$ is the direct image functor of $m$). It is known that $\text{Coh}_f(G)$ (resp., $\text{Coh}(G)$) is tensor equivalent to the rigid tensor category $\Rep(\mathcal{O}(G))$ of \emph{finite dimensional}
$k-$representations of the Hopf algebra $\mathcal{O}(G)$ (resp., the tensor category of \emph{finitely generated}
$k-$representations of the Hopf algebra $\mathcal{O}(G)$). Recall also that $\Ind(\text{Coh}_f(G))$ is the category of locally finite representations of $\mathcal{O}(G)$, i.e., representations in which every vector generates a finite dimensional subrepresentation, while $\Ind(\text{Coh}(G))=\text{QCoh}(G)$ is the category of quasicoherent sheaves of $k-$vector spaces on $G$, i.e., the category of \emph{all}
$k-$representations of the Hopf algebra $\mathcal{O}(G)$.

Let $\delta_g$ denote the {\em simple} object in $\text{Coh}_f(G)$ corresponding to the closed point $g\in G$. It is well known that $\Ext^1_{\mathcal{O}(G)}(\delta_g,\delta_h)=0$ if $g\ne h$, and hence $\text{Coh}_f(G)$ decomposes, as an abelian category, into a direct sum $\text{Coh}_f(G)=\oplus_{g\in G}\text{Coh}_f(G)_g$, where $\text{Coh}_f(G)_g$ is the abelian subcategory of sheaves supported at $g$. Since each of these subcategories has a unique simple object, there is a unique {\em indecomposable projective} object in the pro-completion category, which is $P_g:=\widehat{\mathcal{O}(G)_g}$ (= the completion of $\mathcal{O}(G)$ at $g$).
Therefore any projective in $\Pro(\text{Coh}_f(G))$ is a completed direct sum of such $P_g$.

\begin{example}
If $G$ is a finite abstract group, i.e., a constant group scheme over $k$, then $\text{Coh}_f(G)=\text{Coh}(G)$ is nothing but the fusion category $\Vect(G)$ of finite dimensional $G-$graded vector spaces over $k$.
\end{example}

\subsection{Equivariant coherent sheaves}

Let $H$ be a closed group subscheme of $G$ (see Section 2.1), and let $\mu
:G\times H\to G$ be its free action on $G$ by right translations (in other words, the free actions of $H(R)$ on $G(R)$ by right translations are functorial in $R$, $R$ a commutative algebra over $k$). Set
$$\eta:=\mu(id\times m_{|H})=\mu(\mu\times id):
G\times H\times H\to G.$$ Let
$$\text{p}_{GH}^1:G\times H\to G,\,\text{p}_{GHH}^1:G\times H\times H\to G,\,
\text{p}_{GHH}^{12}:G\times H\times H\to G\times H$$ be the projections on
$G$, $G$ and $G\times H$, respectively. We clearly have that $p_{GH}^1\circ
p_{GHH}^{12}=p_{GHH}^1$.

Let $\psi: H\times H\to \mathbb{G}_m$ be a normalized $2-$cocycle.
Equivalently, $\psi\in \mathcal{O}(H)\ot \mathcal{O}(H)$ is a
\emph{Drinfeld twist} for $\mathcal{O}(H)$, i.e., $\psi$ is an
invertible element satisfying the equations
$$(\Delta\ot id)(\psi)(\psi\ot 1)=(id\ot \Delta)(\psi)(1\ot \psi),$$
$$(\varepsilon\ot id)(\psi)=(id\ot \varepsilon)(\psi)=1.$$ We let
$\mathcal{O}(H)_{\psi}$ be the (``twisted") coalgebra with underlying vector
space $\mathcal{O}(H)$ and comultiplication $\Delta_{\psi}$ given by
$\Delta_{\psi}(f):=\Delta(f)\psi$, where $\Delta$ is the standard
comultiplication of $\mathcal{O}(H)$.

Note that $\psi$ (like any other regular non-vanishing function)
defines an automorphism of any coherent sheaf on $H\times H$ by
multiplication.

\begin{definition}\label{defequiv}
Let $\psi: H\times H\to \mathbb{G}_m$ be a normalized $2-$cocycle on
a closed group subscheme $H$ of $G$.

1) An \emph{$(H,\psi)-$equivariant coherent sheaf} on $G$ is a pair
$(S,\lambda_S)$, where $S\in \text{Coh}(G)$ and $\lambda_S$ is an
isomorphism $\lambda_S:(\text{p}_{GH}^1)^*(S)\to \mu^*(S)$ of sheaves on
$G\times H$ such that the diagram of morphisms of sheaves on
$G\times H\times H$
\begin{equation*}
\label{equivariantX} \xymatrix{(\text{p}_{GHH}^1)^*(S) \ar[d]_{(id\times
m_{|H})^*(\lambda_S)}\ar[rr]^{(\text{p}_{GHH}^{12})^*(\lambda_S)}
&& \mu^*(S)\ar[d]^{(\mu\times id)^*(\lambda_S)} &&\\
\eta^*(S)\ar[rr]_{id\boxtimes \psi} && \eta^*(S)&&}
\end{equation*}
is commutative.

2) Let $(S,\lambda_S)$ and $(T,\lambda_T)$ be two
$(H,\psi)-$equivariant coherent sheaves on $G$. A morphism $\phi:S\to T$ in
$\text{Coh}(G)$ is said to be \emph{$(H,\psi)-$equivariant} if the
diagram of morphisms of sheaves on $G\times H$
\begin{equation*}
\label{equivariantX} \xymatrix{(p_{GH}^1)^*(S)
\ar[d]_{\lambda_S}\ar[rr]^{(p_{GH}^1)^*(\phi)}
&& (p_{GH}^1)^*(T)\ar[d]^{\lambda_T} &&\\
\mu^*(S)\ar[rr]_{\mu^*(\phi)} && \mu^*(T)&&}
\end{equation*}
is commutative.

3) Let $\text{Coh}_f^{(H,\psi)}(G)$ be the abelian category of
$(H,\psi)-$equivariant coherent sheaves on $G$ \emph{with finite support in
$G/H$} (i.e., sheaves supported on {\em finitely many} $H-$cosets), with
$(H,\psi)-$equivariant morphisms.
\end{definition}

\begin{example}\label{ex1}
$\text{Coh}_f^{(\{1\},1)}(G)=\text{Coh}_f(G)$, and $\text{Coh}_f^{(G,\psi)}(G)=\Vect$ (the unique simple object being the regular representation $\mathcal{O}(G)$).
\end{example}

\begin{remark}\label{biequiv}
Let $(H',\psi')$ be another pair consisting of a closed group subscheme $H'$ of $G$ and a normalized $2-$cocycle $\psi'$ on it. By considering the free right action of $H'\times H$ on $G$ given by $g(a,b):=a^{-1}gb$, we can similarly
define {\em $((H',\psi'),(H,\psi))-$biequivariant coherent sheaves} on $G$, and the abelian category $\text{Coh}_f^{((H',\psi'),(H,\psi))}(G)$.
\end{remark}

\begin{remark}
1) In \cite[p.110]{Mum}, an $H-$equivariant sheaf $(S,\lambda_S)$ on $G$ is referred to as a
sheaf $S$ on $G$ together with a \emph{lift} $\lambda_S$ of the $H-$action $\mu$ on $G$ to $S$.

2) It is well known that the (geometric, hence also categorical)
quotient scheme $G/H$ exists. Let $\text{Coh}_f(G/H)$ be the
abelian category of coherent sheaves of \emph{finite dimensional}
$k-$vector spaces on $G/H$, and let $\pi:G\to G/H$ be the
canonical $H-$invariant morphism. It is known that the inverse
image functor $\pi^*:\text{Coh}_f(G/H)\to \text{Coh}(G)$ determines an
equivalence of categories between $\text{Coh}_f(G/H)$ and
$\text{Coh}_f^{(H,1)}(G)$, with $\pi_*^H$ as its inverse (where $\pi_*^H$
is the subsheaf of $H-$invariants of $\pi_*$).
\end{remark}

This following lemma will be very useful in the sequel.

\begin{lemma}\label{simpleex}
Let $H$ be a closed group subscheme of an affine group scheme $G$
over $k$, acting on itself and on $G$ by right translations
$\mu_H:H\times H\to H$ and $\mu_G:G\times H\to G$, respectively. Let
$\iota=\iota_H:H\hookrightarrow G$ be the inclusion morphism, and let $\psi$ be a normalized $2-$cocycle on $H$. The following hold:

1) The structure sheaf $\mathcal{O}(H)$ of $H$ admits a canonical structure of an $(H,\psi)-$equivariant coherent sheaf on $H$, making it the unique (up to isomorphism) simple object
of $\text{Coh}_f^{(H,\psi)}(H)$.

2) The sheaf $\iota_*\mathcal{O}(H)\in \text{Coh}(G)$ (i.e., the
representation of $\mathcal{O}(G)$ on $\mathcal{O}(H)$ coming from the morphism $\iota$) is a simple object in
$\text{Coh}_f^{(H,\psi)}(G)$.

3) For any $X\in \text{Coh}_f(G)$ and $M\in \text{Coh}_f^{(H,\psi)}(G)$, we have that
$m_*(X\boxtimes M)\in \text{Coh}_f^{(H,\psi)}(G)$.
\end{lemma}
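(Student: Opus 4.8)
The plan is to prove the three parts in order, deriving 2) and 3) from 1) by transporting structure along base-change isomorphisms. Throughout I will work sheaf-theoretically, using that $\mu_G$, $p^1_{GH}$, $m$ are flat (each factors as an automorphism of a product followed by a projection) so that flat base change applies, and that the finite-support hypotheses make the various direct images $m_*$, $\iota_*$ well behaved.

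For part 1), since here $G=H$ the projection $p^1_{HH}$ and action $\mu_H$ both land in $H$, and $(p^1_{HH})^*\mathcal{O}(H)$ and $\mu_H^*\mathcal{O}(H)$ are both canonically the structure sheaf of $H\times H$. Hence an equivariant structure $\lambda$ is exactly multiplication by an invertible function $c\in\mathcal{O}(H\times H)^\times$, and the coherence square of Definition \ref{defequiv} becomes the scalar identity $c(gh_1,h_2)\,c(g,h_1)=\psi(h_1,h_2)\,c(g,h_1h_2)$ on $H\times H\times H$. I would then observe that $c:=\psi$ solves it, as this is precisely the $2$-cocycle (twist) equation for $\psi$ written in function form. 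This produces the canonical structure. For uniqueness of the simple object I would compare with the untwisted case: multiplication of the equivariant datum by $\psi$ defines an equivalence of abelian categories $\text{Coh}_f^{(H,1)}(H)\xrightarrow{\sim}\text{Coh}_f^{(H,\psi)}(H)$ (the morphism condition of Definition \ref{defequiv} does not involve $\psi$, and the same computation as above converts the untwisted cocycle condition into the twisted one), and then invoke the preceding Remark identifying $\text{Coh}_f^{(H,1)}(H)$ with $\text{Coh}_f(H/H)=\text{Coh}_f(\Spec(k))=\Vect$. Under these equivalences the unique simple $k\in\Vect$ corresponds to $\mathcal{O}(H)$ carrying the structure $c=\psi$; this is exactly Example \ref{ex1}.

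For part 2), the key point is that $\iota$ is $H$-equivariant: $\mu_G\circ(\iota\times\id)=\iota\circ\mu_H$ and $p^1_{GH}\circ(\iota\times\id)=\iota\circ p^1_{HH}$, and both resulting squares are Cartesian. Since $p^1_{GH}$ and $\mu_G$ are flat, flat base change gives canonical isomorphisms $(p^1_{GH})^*\iota_*\cong(\iota\times\id)_*(p^1_{HH})^*$ and $\mu_G^*\iota_*\cong(\iota\times\id)_*\mu_H^*$. Conjugating the structure $\lambda_{\mathcal{O}(H)}$ of part 1) by these isomorphisms and applying $(\iota\times\id)_*$ produces an equivariant structure on $\iota_*\mathcal{O}(H)$, whose coherence follows from that of $\mathcal{O}(H)$ by naturality of base change. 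For simplicity I would note that $\iota_*\mathcal{O}(H)$ is supported on the single coset $H\subseteq G$ (hence lies in $\text{Coh}_f^{(H,\psi)}(G)$), that $\iota_*$ is exact and fully faithful (a closed immersion), and that it identifies the full subcategory of $\text{Coh}_f^{(H,\psi)}(G)$ of objects supported on $H$ with $\text{Coh}_f^{(H,\psi)}(H)$. Since every equivariant subobject of $\iota_*\mathcal{O}(H)$ is supported on $H$, simplicity of $\iota_*\mathcal{O}(H)$ reduces to simplicity of $\mathcal{O}(H)$ from part 1).

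For part 3), the geometric input is that right translation by $H$ commutes with convolution: $m\circ(\id_G\times\mu_G)=\mu_G\circ(m\times\id_H)$ and $m\circ p^{12}=p^1_{GH}\circ(m\times\id_H)$ as maps $G\times G\times H\to G$, with both associated squares Cartesian and the bottom maps $\mu_G$, $p^1_{GH}$ flat. Flat base change then yields $\mu_G^*m_*(X\boxtimes M)\cong(m\times\id)_*(X\boxtimes\mu_G^*M)$ and $(p^1_{GH})^*m_*(X\boxtimes M)\cong(m\times\id)_*(X\boxtimes(p^1_{GH})^*M)$, so the structure $\lambda_M\colon(p^1_{GH})^*M\to\mu_G^*M$ induces, via $\id_X\boxtimes\lambda_M$ and $(m\times\id)_*$, a structure $\lambda_{X\otimes M}$ on $m_*(X\boxtimes M)=X\otimes M$. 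Finite support is immediate: if $X$ is supported on finitely many points and $M$ on finitely many cosets $g_jH$, then $X\otimes M$ is supported on the finitely many cosets $\supp(X)\cdot g_jH$. This simultaneously exhibits $\text{Coh}_f^{(H,\psi)}(G)$ as a module category over $\text{Coh}_f(G)$.

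I expect the main obstacle to be verifying the coherence (cocycle) condition for the induced structure $\lambda_{X\otimes M}$ in part 3). This requires descending to the triple product $G\times H\times H$, producing a third flat base-change isomorphism there, and checking that all the base-change isomorphisms are mutually compatible (a cocycle coherence), while tracking that $\psi$—which lives only on the $H\times H$ factor and is untouched by convolving with $X$—reappears in the correct place. The analogous, simpler compatibility must also be checked in part 2). These coherence verifications, together with the care needed because $m$ is not proper (handled via the finite-support hypothesis and passage to $\Ind$/$\Pro$ objects), are where the real work lies; the rest is formal.
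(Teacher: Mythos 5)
Your proposal is correct and follows essentially the same route as the paper: part 1) by writing the equivariant datum on $\mathcal{O}(H)$ as multiplication by an invertible function and solving the coherence equation with $\psi$ itself (the paper packages this via the shearing isomorphism $\varphi=(\mu_H,p^2_{HH})$ composed with $\psi^{-1}$, which is the same computation up to the choice of identification), part 2) by transporting that structure along the two base-change isomorphisms for the Cartesian squares expressing $H$-equivariance of $\iota$, and part 3) by viewing $X\boxtimes M$ as $(\{1\}\times H,\psi)$-equivariant on $G\times G$ and using $H$-equivariance of $m$. Your uniqueness argument in 1) (the $\psi$-twisting equivalence $\text{Coh}_f^{(H,1)}(H)\simeq\text{Coh}_f^{(H,\psi)}(H)$ followed by descent to $\text{Coh}_f(H/H)=\Vect$) and the support argument for simplicity in 2) merely spell out what the paper leaves to Example \ref{ex1} and to the reader.
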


\begin{proof}
1) Consider the isomorphism $\varphi:=(\mu_H,p_{HH}^2):H\times
H\xrightarrow{\cong} H\times H$, where $p_{HH}^2:H\times H\to H$ is the projection on the second coordinate. Clearly, $p_{HH}^1\circ\varphi=\mu_H$,
so $(p_{HH}^1\circ\varphi)^*\mathcal{O}(H)=\mu_H^*\mathcal{O}(H)$. Now,
multiplication by $\psi$ defines an isomorphism
$$\mu_H^*\mathcal{O}(H)=(p_{HH}^1\circ\varphi)^*\mathcal{O}(H)\xrightarrow{\psi}
(\varphi^*\circ (p_{HH}^1)^*)\mathcal{O}(H),$$ and since we have that
$(p_{HH}^1)^*\mathcal{O}(H)=\mathcal{O}(H)\ot \mathcal{O}(H)$, we get an
isomorphism $$\lambda:(p_{HH}^1)^*\mathcal{O}(H)\xrightarrow{=}
\mathcal{O}(H)\ot \mathcal{O}(H)\xrightarrow{\varphi^*}
\varphi^*(\mathcal{O}(H)\ot \mathcal{O}(H))\xrightarrow{\psi^{-1}}
\mu_H^*\mathcal{O}(H).$$ The fact that
$(\mathcal{O}(H),\lambda)$ is an $(H,\psi)-$equivariant coherent sheaf on $H$ can be checked now in a straightforward manner. Clearly, $(\mathcal{O}(H),\lambda)$ is the unique (up to isomorphism) simple object of $\text{Coh}_f^{(H,\psi)}(H)$ (see Example \ref{ex1}).

2) Since $\iota$ is affine, the commutative diagrams
\begin{equation*}
\xymatrix{H\times H \ar[d]_{\iota \times id_H}\ar[rr]^{p_{HH}^1}
&& H \ar[d]^{\iota} &&\\
G\times H\ar[rr]_{p_{GH}^1} && G&&} \xymatrix{H\times H \ar[d]_{\iota
\times id_H}\ar[rr]^{\mu_H}
&& H \ar[d]^{\iota} &&\\
G\times H\ar[rr]_{\mu_G} && G&&}
\end{equation*}
yield isomorphisms 
\begin{equation}\label{eq1}
(p_{GH}^1)^*\iota_*\mathcal{O}(H)\xrightarrow{\cong}
(\iota \times id_H)_*(p_{HH}^1)^* \mathcal{O}(H)
\end{equation} 
and
\begin{equation}\label{eq2}
(\iota \times id_H)_*\mu_H^* \mathcal{O}(H) \xrightarrow{\cong}
\mu_G^*\iota_*\mathcal{O}(H)
\end{equation} 
(``base change").

Let $\lambda:(p_{HH}^1)^*\mathcal{O}(H)\xrightarrow{\cong}
\mu_H^*\mathcal{O}(H)$ be the isomorphism constructed in Part 1. Since $\iota$ is $H-$equivariant, we get an isomorphism
\begin{equation}\label{eq3}
(\iota \times
id_H)_*(p_{HH}^1)^* \mathcal{O}(H) \xrightarrow{(\iota \times
id_H)_*(\lambda)} (\iota \times id_H)_*\mu_H^* \mathcal{O}(H).
\end{equation}
It is now straightforward to check that the composition of isomorphisms (\ref{eq1}), (\ref{eq3}) and (\ref{eq2})
\begin{equation*}
(p_{GH}^1)^*\iota_*\mathcal{O}(H) \xrightarrow{\cong} 
\mu_G^*\iota_*\mathcal{O}(H)
\end{equation*}
endows $\iota_*\mathcal{O}(H)$ with a structure of an $(H,\psi)-$equivariant coherent sheaf on $G$. Clearly, $\iota_*\mathcal{O}(H)$ is simple.

3) Consider the right action $id\times \mu:G\times G\times H\to
G\times G$ of $H$ on $G\times G$. Since $M\in
\text{Coh}_f^{(H,\psi)}(G)$ it is clear that $X\boxtimes M\in \text{Coh}(G\times G)$ is an $(H,\psi)-$equivariant coherent sheaf on $G\times G$ (here we identify $H$ with the subgroup $\{1\}\times H\subseteq G\times G$). But since $m:G\times G\to G$ is $H-$equivariant, $m_*$
carries $(H,\psi)-$equivariant coherent sheaves on $G\times G$ to
$(H,\psi)-$equivariant coherent sheaves on $G$.
\end{proof}

\subsection{Exact module categories}

Let $G$, $H$ and $\psi$ be as in
3.1--3.2, and consider the coalgebra $\mathcal{O}(H)_{\psi}$
in $\text{Coh}(G)$. Let $\widehat{\mathcal{O}(H)_{\psi}}$ be its profinite completion with respect to the algebra structure of $\mathcal{O}(H)$ (see Example \ref{hopf}); it is a coalgebra object in both $\Pro(\text{Coh}(G))$ and $\Pro(\text{Coh}_f(G))$. Let
$\Comod_{\Pro(\text{Coh}_f(G))}(\widehat{\mathcal{O}(H)_{\psi}})$ be the abelian
category of right comodules over $\widehat{\mathcal{O}(H)_{\psi}}$ in
$\Pro(\text{Coh}_f(G))$.

\begin{proposition}\label{grsch}
Let $G$ be an affine group scheme over $k$, let $H$
be a closed group subscheme of $G$, let $\psi$ be a normalized
$2-$cocycle on $H$, and let $\delta=\delta_{(H,\psi)}:=(\iota_H)_*\mathcal{O}(H)\in \text{Coh}_f^{(H,\psi)}(G)$. The following hold:

1) Set $\mathcal{M}:=\text{Coh}_f^{(H,\psi)}(G)$. The bifunctor
$$\ot^{\mathcal{M}}:\text{Coh}_f(G)\boxtimes\mathcal{M}\to
\mathcal{M},\,\, X\boxtimes M\mapsto m_*(X\boxtimes M),$$ defines on
$\mathcal{M}$ a structure of an indecomposable
$\text{Coh}_f(G)-$module category.

2) Set $\mathcal{V}:=\Comod_{\Pro(\text{Coh}_f(G))}(\widehat{\mathcal{O}(H)_{\psi}})$.
The bifunctor
$$\ot^{\mathcal{V}}:\text{Coh}_f(G)\boxtimes\mathcal{V}
\to \mathcal{V},\,\, X\boxtimes V\mapsto m_*(X\boxtimes V),$$
defines on $\mathcal{V}$ a structure of an $\text{Coh}_f(G)-$module
category.

3) The categories $\text{Coh}_f^{(H,\psi)}(G)$ and
$\Comod_{\Pro(\text{Coh}_f(G))}(\widehat{\mathcal{O}(H)_{\psi}})$ are equivalent as
module categories over $\text{Coh}_f(G)$. In particular, $\overline{\Hom}(\delta,\delta)$ and $\widehat{\mathcal{O}(H)_{\psi}}$ are isomorphic as coalgebras in $\Pro(\text{Coh}_f(G))$.
\end{proposition}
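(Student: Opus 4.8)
The plan is to prove the three parts in order, with the bulk of the work concentrated in establishing the module-category structures and then identifying the two as module categories via the abstract Barr--Beck machinery already set up in the Preliminaries. For Part 1, I would first invoke Lemma \ref{simpleex}(3), which already guarantees that the convolution bifunctor $X\boxtimes M\mapsto m_*(X\boxtimes M)$ lands in $\text{Coh}_f^{(H,\psi)}(G)$; what remains is to check the module-category axioms. Biexactness follows because $m$ is a finite (hence affine and flat enough) morphism on the relevant finite-support sheaves, so $m_*$ is exact here, and $\boxtimes$ is biexact; the associativity and unit constraints are inherited from those of the convolution tensor structure on $\text{Coh}_f(G)$ together with the associativity $m\circ(m\times\id)=m\circ(\id\times m)$, so the pentagon/triangle for the module action reduces to the ones already known for $\text{Coh}_f(G)$. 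For indecomposability I would use Corollary \ref{simgen}: by Lemma \ref{simpleex}(2) the object $\delta=(\iota_H)_*\mathcal{O}(H)$ is simple, so it suffices to show it generates $\mathcal{M}$, i.e.\ that every object of $\text{Coh}_f^{(H,\psi)}(G)$ is a subquotient of some $m_*(X\boxtimes\delta)=X\ot^{\mathcal M}\delta$; since convolving $\delta_g\in\text{Coh}_f(G)$ with $\delta$ translates the support to the coset $gH$, one sweeps out all $H$-cosets and hence all of $\mathcal{M}$.

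For Part 2, the structure on $\mathcal{V}=\Comod_{\Pro(\text{Coh}_f(G))}(\widehat{\mathcal{O}(H)_{\psi}})$ is the canonical one attached to any coalgebra in the $\Pro$-completion: the action $X\ot^{\mathcal V}V=m_*(X\boxtimes V)$ is well defined because the tensor structure on $\text{Coh}_f(G)$ extends to $\Pro(\text{Coh}_f(G))$ (as recalled in the Preliminaries) and because the comodule coaction is natural with respect to this action, so $X\ot^{\mathcal V}V$ inherits a $\widehat{\mathcal{O}(H)_{\psi}}$-comodule structure. Here I would lean explicitly on Example \ref{hopf}, which computes $\overline{\Hom}(\delta,\delta)$ as a profinite completion and thereby identifies $\widehat{\mathcal{O}(H)_{\psi}}$ as a genuine coalgebra object of $\Pro(\text{Coh}_f(G))$; the module axioms for $\mathcal V$ are then purely formal, exactly as in the general comodule-category construction recalled before Example \ref{hopf}.

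Part 3 is the heart of the statement, and I would obtain it by applying the Barr--Beck reconstruction theorem (the special case cited as \cite[Theorem 3.17]{EO} in the Preliminaries) to the module category $\mathcal{M}=\text{Coh}_f^{(H,\psi)}(G)$ with its distinguished generator $\delta$. Since $\mathcal{M}$ is indecomposable (Part 1) and exact --- which I would verify using the criterion of Proposition \ref{exact}, checking that $I\ot\delta$ is injective in $\Ind(\mathcal{M})$ for injective $I\in\Ind(\text{Coh}_f(G))$ --- that theorem yields an equivalence $\mathcal{M}\simeq\Comod_{\Pro(\text{Coh}_f(G))}(\overline{\Hom}(\delta,\delta))$ of $\text{Coh}_f(G)$-module categories. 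It then suffices to compute the coalgebra $\overline{\Hom}(\delta,\delta)$ and identify it with $\widehat{\mathcal{O}(H)_{\psi}}$. By definition $\overline{\Hom}(\delta,\delta)$ represents the functor $X\mapsto\Hom_{\mathcal M}(\delta,X\ot^{\mathcal M}\delta)=\Hom_{\mathcal M}\bigl(\delta,m_*(X\boxtimes\delta)\bigr)$, and unwinding this using the description $\delta=(\iota_H)_*\mathcal{O}(H)$ together with adjunction ($m_*$ against $m^*$, and $\iota$-pushforward against restriction) should produce $\mathcal{O}(H)$ as the underlying pro-object, with the comultiplication twisted precisely by $\psi$ because the $(H,\psi)$-equivariant structure on $\delta$ built in Lemma \ref{simpleex}(1) was defined by multiplication by $\psi^{-1}$. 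The coalgebra comparison is exactly the ``$\overline{\Hom}(\delta,\delta)\cong\widehat{\mathcal{O}(H)_\psi}$'' assertion of the last sentence, and tracking the cocycle twist through this Hom-computation is the main obstacle: one must check that the coalgebra structure coming from the canonical coalgebra structure on $\overline{\Hom}(\delta,\delta)$ matches $\Delta_\psi$ rather than the untwisted $\Delta$, which requires carefully comparing the equivariance cocycle condition of Definition \ref{defequiv}(1) with the Drinfeld twist identity $(\Delta\ot\id)(\psi)(\psi\ot1)=(\id\ot\Delta)(\psi)(1\ot\psi)$.
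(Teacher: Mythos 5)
Your Parts 1 and 2 track the paper's argument closely: the paper likewise deduces the module structure from Lemma \ref{simpleex} together with $m(m\times \id)=m(\id\times m)$ and the cocycle identity, and gets indecomposability from the fact that the simple object $\delta$ generates $\mathcal{M}$ (indeed it records the stronger fact that \emph{every} $M\in\mathcal{M}$ is of the form $X\ot^{\mathcal M}\delta$, which is a cleaner statement than your coset-sweeping sketch). Part 3 is where you genuinely diverge. The paper does not invoke Barr--Beck here at all: it proves the equivalence $\text{Coh}_f^{(H,\psi)}(G)\simeq\Comod_{\Pro(\text{Coh}_f(G))}(\widehat{\mathcal{O}(H)_{\psi}})$ directly, by noting that for $S\in\Pro(\text{Coh}_f(G))$ the adjunction $\Hom_{G\times H}(\mu^*(S),(p_{GH}^1)^*(S))\cong\Hom_G(S,\mu_*(p_{GH}^1)^*(S))$, combined with $\mu_*(p_{GH}^1)^*(S)\cong S\ot\widehat{\mathcal{O}(H)}$, turns an equivariance isomorphism $\lambda$ into a map $\rho_\lambda:S\to S\ot\widehat{\mathcal{O}(H)}$, and then checking that $(S,\lambda^{-1})$ satisfies the equivariance diagram of Definition \ref{defequiv} if and only if $\rho_\lambda$ is a comodule map for the twisted comultiplication $\Delta_\psi$. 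This adjunction dictionary is exactly what absorbs the ``main obstacle'' you flag (matching the equivariance cocycle condition against the Drinfeld twist identity), and the coalgebra isomorphism $\overline{\Hom}(\delta,\delta)\cong\widehat{\mathcal{O}(H)_{\psi}}$ then falls out as a consequence rather than serving as the input.

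The cost of your route is that it front-loads two substantial verifications which you acknowledge but do not carry out. First, the Barr--Beck reconstruction you cite requires $\mathcal{M}$ to be \emph{exact}; in the paper exactness of $\text{Coh}_f^{(H,\psi)}(G)$ is only established later, in Theorem \ref{grsch1}, via the explicit computation $P_1\ot^{\mathcal M}\delta=\widehat{\mathcal{O}(H)_1}\ot_k P(\delta)$ (the projective form of Proposition \ref{exact}). Saying you ``would verify'' that $I\ot\delta$ is injective is precisely where the work lies, and your sketch gives no indication of how. Second, identifying the comultiplication on $\overline{\Hom}(\delta,\delta)$ as $\Delta_\psi$ rather than $\Delta$ by unwinding internal Homs is delicate, and you explicitly leave it open. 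Neither step would fail, so your plan is a workable alternative rather than a wrong one; but as written, Part 3 is a program rather than an argument, whereas the paper's direct comparison of structures is shorter and logically independent of exactness.
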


\begin{proof}
1) Since $m(m\times id)=m(id\times m)$ and $\psi$ is a $2-$cocycle,
it follows from Lemma \ref{simpleex} that $\ot^{\mathcal{M}}$
defines on $\mathcal{M}$ a structure of an $\text{Coh}_f(G)-$module
category. Clearly, $\text{Coh}_f(H)$ is the subcategory of $\text{Coh}_f(G)$ consisting of those objects $X$ for which $X\ot ^{\mathcal{M}} \delta $ is a multiple of $\delta$, and any object $M\in\mathcal{M}$ is of the form $X\ot ^{\mathcal{M}}\delta$ for some $X\in \text{Coh}_f(G)$. In particular, the simple object $\delta$ (see Lemma \ref{simpleex}) generates $\mathcal{M}$, so $\mathcal{M}$ is indecomposable.

2) By definition, an object in $\mathcal{V}$ is a pair $(V,\rho_V)$ consisting of an object
$V\in \Pro(\text{Coh}_f(G))$
and a morphism $\rho_V:V\to V\ot\widehat{\mathcal{O}(H)_{\psi}}$ in
$\Pro(\text{Coh}_f(G))$ satisfying the comodule axioms. Clearly, if
$X\in\text{Coh}_f(G)$ then $m_*(X\boxtimes V)\in\Pro(\text{Coh}_f(G))$, and $\rho_{m_*(X\boxtimes
V)}:=id_X\ot\rho_V$ is a morphism in $\Pro(\text{Coh}_f(G))$ defining on
$m_*(X\boxtimes V)$ a structure of a right comodule over
$\widehat{\mathcal{O}(H)_{\psi}}$.

3) For any $S\in \Pro(\text{Coh}_f(G))$ there is a natural isomorphism $$\Hom_{G\times
H}(\mu^*(S),(p_{GH}^1)^*(S))\cong \Hom_G(S,\mu_*(p_{GH}^1)^*(S))$$ (``adjunction"). Since $\mu_*(p_{GH}^1)^*(S)\cong S\ot
\widehat{\mathcal{O}(H)}$, we can assign
to any isomorphism $\lambda:\mu^*(S)\to (p_{GH}^1)^*(S)$ a morphism
$\rho_{\lambda}:S\to S\ot \widehat{\mathcal{O}(H)}$. It is now straightforward
to verify that $(S,\lambda^{-1})$ is an $(H,\psi)-$equivariant sheaf
on $G$ if and only if $\rho_{\lambda}:S\to S\ot
\widehat{\mathcal{O}(H)_{\psi}}$ is a comodule map.
\end{proof}

\begin{example}
We have that $\text{Coh}_f^{(\{1\},1)}(G)=\text{Coh}_f(G)$ is the regular module, and $\text{Coh}_f^{(G,1)}(G)=\Vect$ is the usual fiber functor on $\text{Coh}_f(G)$.
\end{example}

We are now ready to state and prove the main result of
this section.

\begin{theorem}\label{grsch1}
Let $G$ be an affine group scheme over $k$. There is
a bijection between conjugacy classes of pairs $(H,\psi)$ and
equivalence classes of indecomposable exact module categories over
$\text{Coh}_f(G)$, assigning $(H,\psi)$ to $\text{Coh}_f^{(H,\psi)}(G)$.
\end{theorem}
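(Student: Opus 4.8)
The plan is to establish the two directions of the claimed bijection by combining Proposition \ref{grsch} with the Barr--Beck reconstruction recalled in Section 2. First I would check that the assignment is well defined, i.e.\ that each $\text{Coh}_f^{(H,\psi)}(G)$ is an indecomposable exact module category and that conjugate pairs yield equivalent module categories. Indecomposability is Proposition \ref{grsch}(1). For exactness I would use the criterion of Proposition \ref{exact}(3): the indecomposable projectives of $\Pro(\text{Coh}_f(G))$ are the completions $P_g=\widehat{\mathcal{O}(G)_g}$, and a direct computation of the convolution $P_g\ot^{\mathcal{M}}\delta=m_*(P_g\boxtimes (\iota_H)_*\mathcal{O}(H))$ identifies it with the pro-representable projective cover of the simple object supported on the coset $gH$; hence $P\ot^{\mathcal{M}}M$ is projective in $\Pro(\mathcal{M})$ for every projective $P$ and every $M$, which is exactness. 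For conjugacy invariance I would observe that right translation $R_a\colon x\mapsto xa$ by a point $a\in G(k)$ commutes with the left convolution action (since $m(x,ya)=m(x,y)a$), so $(R_a)_*$ is a $\text{Coh}_f(G)$-module autoequivalence of $\text{Coh}(G)$ carrying $\text{Coh}_f^{(H,\psi)}(G)$ to $\text{Coh}_f^{(a^{-1}Ha,\,\psi^a)}(G)$; thus conjugate pairs give equivalent module categories.

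For surjectivity, let $\mathcal{M}$ be an indecomposable exact module category over $\text{Coh}_f(G)$ and pick a simple object $\delta\in\mathcal{M}$, which generates $\mathcal{M}$ by Corollary \ref{simgen}. By Barr--Beck, $\mathcal{M}\cong\Comod_{\Pro(\text{Coh}_f(G))}(C)$ with $C=\overline{\Hom}(\delta,\delta)$, so by Proposition \ref{grsch}(3) it suffices to identify the coalgebra $C$ with $\widehat{\mathcal{O}(H)_\psi}$ for some pair $(H,\psi)$. I would first locate $H$: the full subcategory $\mathcal{C}_\delta:=\{X\in\text{Coh}_f(G):X\ot^{\mathcal{M}}\delta\in\mathrm{add}(\delta)\}$ is a rigid tensor Serre subcategory of $\text{Coh}_f(G)=\Rep(\mathcal{O}(G))$ (closure under $\ot$, subquotients and duals follows from biexactness, simplicity of $\delta$, and rigidity), hence corresponds to a quotient Hopf algebra $\mathcal{O}(G)\twoheadrightarrow\mathcal{O}(H)$, i.e.\ to a unique closed group subscheme $H\subseteq G$; rigidity of $\mathcal{C}_\delta$ is exactly what forces $H$ to be closed under inversion and thus a \emph{subgroup} scheme rather than merely a submonoid. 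Since $\delta$ is simple, Schur's lemma gives $\End_{\mathcal{M}}(\delta)=k$, so the unit component of $C$ is $1$-dimensional and $C$ is connected, consistent with $\supp C = H$ as the scheme-theoretic support.

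Next I would extract $\psi$. The action of $\text{Coh}_f(H)=\mathcal{C}_\delta$ on the single simple $\delta$ defines a $k$-linear exact faithful functor $V\colon\text{Coh}_f(H)\to\Vect$, $X\mapsto\Hom_{\mathcal{M}}(\delta,X\ot^{\mathcal{M}}\delta)$, and the module-category associativity constraint equips $V$ with the structure of a fiber functor on $\Rep(\mathcal{O}(H))$ twisted by an associator; by Lemma \ref{simpleex} and Example \ref{ex1} such twisted fiber functors are exactly the normalized $2$-cocycles $\psi$ on $H$ (Drinfeld twists of $\mathcal{O}(H)$), the pentagon of $\mathcal{M}$ yielding the cocycle identity and $\psi$ being well defined up to coboundary. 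Transporting the coalgebra structure through these identifications gives $C\cong\widehat{\mathcal{O}(H)_\psi}$, whence $\mathcal{M}\cong\text{Coh}_f^{(H,\psi)}(G)$ by Proposition \ref{grsch}. For injectivity I would observe that $H$ (as $\mathcal{C}_\delta$) and the class of $\psi$ are recovered from $\mathcal{M}$ intrinsically once the simple generator $\delta$ is fixed, and that replacing $\delta$ by another simple object changes $(H,\psi)$ precisely by a translation as in the first paragraph, i.e.\ by $G(k)$-conjugation; hence distinct conjugacy classes give inequivalent module categories.

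The main obstacle is the identification of $C$ with $\widehat{\mathcal{O}(H)_\psi}$ in the last two paragraphs. One must run the support, rigidity and fiber-functor arguments for an Ind/Pro object rather than for an ordinary coherent sheaf, and—crucially—control the possibly non-reduced (infinitesimal) scheme structure of $H$: the set of closed points of $\supp C$ only sees $H_{\mathrm{red}}$, so the full subgroup-scheme structure of $H$ and the cocycle $\psi$ must be read off from $C$ (equivalently from its dual algebra $\underline{\Hom}(\delta,\delta)$) as a topological $\mathcal{O}(G)$-comodule coalgebra, not merely from its support as a set of points. Verifying that the twisted fiber functor $V$ has the correct underlying comodule structure, so that no exotic (non-geometrical) fiber functor on $\Rep(\mathcal{O}(H))$ can occur, is where the argument is most delicate.
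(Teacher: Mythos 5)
Your proposal is correct and follows essentially the same route as the paper: exactness via Proposition \ref{exact} and the explicit computation of $P_g\ot^{\mathcal{M}}\delta$, then a simple generator $\delta$, the stabilizer tensor subcategory $\mathcal{C}\cong\text{Coh}_f(H)$, the twisted fiber functor producing $\psi$, and Proposition \ref{grsch} to conclude. The one step you flag as delicate --- identifying $\overline{\Hom}(\delta,\delta)$ with $\widehat{\mathcal{O}(H)_{\psi}}$ --- is closed in the paper by a short Yoneda argument showing both pro-objects represent the functor $X\mapsto \overline{X_H}$ (the maximal subsheaf of $X$ scheme-theoretically supported on $H$), which in particular handles the non-reduced structure of $H$ that you rightly worry about.
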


\begin{proof}
We first show that the indecomposable $\text{Coh}_f(G)-$module category $\mathcal{M}:=\text{Coh}_f^{(H,\psi)}(G)$ is exact. To this end, we have to show that if $P\in\Pro(\text{Coh}_f(G))$ is projective and $X\in \mathcal{M}$, then $P\ot ^{\mathcal{M}} X$ is projective (see Proposition \ref{exact}). Clearly, it is sufficient to show it for $X:=\delta=\delta_{(H,\psi)}$. Moreover, since any projective in $\Pro(\text{Coh}_f(G))$ is a completed direct sum of $P_g$ (see Section 3.1), it suffices to check that $P_g\ot ^{\mathcal{M}} \delta$ is projective. Furthermore, since $P_g=\delta_g\otimes P_1$, and $\delta_g\ot ^{\mathcal{M}} \,?$ is an autoequivalence of $\mathcal{M}$ as an abelian category (since $\delta_g$ is invertible), it suffices to do so for $g=1$. Finally, this is done just by computing this product explicitly using the definition, which yields that $P_1\ot ^{\mathcal{M}} \delta=\widehat{\mathcal{O}(H)_1}\otimes_k P(\delta)$, where $P(\delta)$ is the projective cover of $\delta$ (i.e., the unique indecomposable projective in the block of $\Pro(\mathcal{M})$ containing $\delta$; as a sheaf on $G$, it is the function algebra on the formal neighborhood of $H$), and hence projective as desired. 

Conversely, we have to show that any indecomposable exact module
category $\mathcal{M}$ over $\text{Coh}_f(G)$ is of the form
$\text{Coh}_f^{(H,\psi)}(G)$. Indeed, let $\delta\in \mathcal{M}$ be a
simple object generating $\mathcal{M}$ (such $\delta$ exists by
Corollary \ref{simgen}), and consider the full subcategory
$$\mathcal{C}:=\{X\in \text{Coh}_f(G)\mid X\ot ^{\mathcal{M}}
\delta=\dim_k(X)\delta\}$$ of $\text{Coh}_f(G)$. Clearly,
$\mathcal{C}$ is a tensor subcategory of $\text{Coh}_f(G)$. Therefore, there exists a closed group subscheme $H$ of $G$ such that
$\mathcal{C}\cong \text{Coh}_f(H)$ as tensor categories, and we may identify $\mathcal{C}$ with $\text{Coh}_f(H)$. Moreover, the functor
$$F:\mathcal{C}=\text{Coh}_f(H)\to \Vect,\,F(X)=\Hom_{\mathcal{M}}(\delta,X\ot ^{\mathcal{M}} \delta),$$ together with the tensor structure
$F(\cdot)\otimes F(\cdot)\xrightarrow{\cong} F(\cdot\otimes \cdot)$
coming from the associativity constraint, is a fiber functor on $\text{Coh}_f(H)$. But, letting $\overline{X}$ denote the underlying vector space of $X$ (where we view $X$ as an $\mathcal{O}(H)-$module),
we see that $F(X)=\overline{X}$. We therefore get a functorial isomorphism
$\overline{X}\otimes \overline{Y}\xrightarrow{\cong} \overline{X\otimes Y}$, which is nothing but an
invertible element $\psi$ of $\mathcal{O}(H)\otimes
\mathcal{O}(H)=\mathcal{O}(H\otimes H)$ taking values in
$\mathbb{G}_m(k)$. Clearly, $\psi$ is a twist for $\mathcal{O}(H)$.
To summarize, we have obtained that the $\C-$submodule category $<\delta>$ of $\mathcal{M}$ consisting of all multiples of $\delta$ is equivalent to $\text{Coh}_f^{(H,\psi)}(H)$ as a module category over $\mathcal{C}=\text{Coh}_f(H)$.

Finally, let $X\in \text{Coh}_f(G)$, and let $X_H\in \text{Coh}_f(H)$ be the maximal subsheaf of $X$ which is scheme theoretically supported on $H$ (i.e., $\overline{X_H}$ consists of all vectors in $\overline{X}$ which are annihilated by the defining ideal of $H$ in $\mathcal{O}(G)$). Now, on the one hand, since for any $g\in G$, $\delta_g\ot ^{\mathcal{M}}\delta$ is simple, and $\delta_g\ot ^{\mathcal{M}}\delta\cong\delta$ if and only if $g\in H$, it is clear that
$$\Hom_{\Pro(\text{Coh}_f(G))}
(\overline{\Hom}(\delta,\delta),X)=\Hom_{\mathcal{M}}(\delta,X\ot
^{\mathcal{M}}\delta)=\overline{X_H}$$ 
(since it holds for any simple $X$). On the other hand, it is clear that $$\Hom_{\Pro(\text{Coh}_f(G))}
(\widehat{\mathcal{O}(H)_{\psi}},X)=\overline{X_H}.$$
Therefore by Yoneda's lemma, the two coalgebras $\overline{\Hom}(\delta,\delta)$ and $\widehat{\mathcal{O}(H)_{\psi}}$ in $\Pro(\text{Coh}_f(G))$ are isomorphic. But this implies that $\mathcal{M}$ is equivalent
to $\Comod_{\Pro(\text{Coh}_f(G))}(\widehat{\mathcal{O}(H)_{\psi}})$ as a module category over $\text{Coh}_f(G)$ (as $\mathcal{M}$ is
indecomposable, exact, and generated by $\delta$), hence also to $\text{Coh}_f^{(H,\psi)}(G)$ by Proposition \ref{grsch}. 

We are done.
\end{proof}

\section{Exact module categories over $\Rep(G)$}\label{repg}

Let $\mathcal{C}$ be a rigid tensor category. Given two exact module categories $\mathcal{M}$, $\mathcal{N}$ over $\mathcal{C}$, let $\Fun_{\C}(\mathcal{M},\mathcal{N})$ denote the abelian category of $\mathcal{C}-$functors from $\mathcal{M}$ to $\mathcal{N}$. The \emph{dual category of $\mathcal{C}$ with respect to $\mathcal{M}$} is the category $\mathcal{C}^*_{\mathcal{M}}:=\End_{\mathcal{C}}(\mathcal{M})$ of $\mathcal{C}-$endofunctors of $\mathcal{M}$. If $\mathcal{M}$ is indecomposable, $\mathcal{C}^*_{\mathcal{M}}$ is a rigid tensor category, and $\mathcal{M}$ is an indecomposable exact module category over $\mathcal{C}^*_{\mathcal{M}}$. Also, $\Fun_{\C}(\mathcal{M},\mathcal{N})$ is an exact module category over
$\mathcal{C}^*_{\mathcal{M}}$ via the composition of functors.

\subsection{Module categories.} 

We keep the notation from Section \ref{cohg}, and set ${\mathcal{M}(H,\psi)}:=\text{Coh}_f^{(H,\psi)}(G)$; in particular, $\mathcal{M}(G,1)=\Vect$ is the usual fiber functor on $\text{Coh}_f(G)$. We also set ${\mathcal{M}(G,(H,\psi))}:=\text{Coh}_f^{(G,(H,\psi))}(G)$ (see Remark \ref{biequiv}). 

Recall that the $2-$cocycle $\psi$ determines a central extension
$H_{\psi}$ of $H$ by $\mathbb{G}_m$. By an {\em $(H,\psi)-$representation} of $H$ we shall mean a rational representation of the group
scheme $H_{\psi}$ on which $\mathbb{G}_m$ acts with weight $1$
(i.e., via the identity character). Let us denote the category of finite dimensional $(H,\psi)-$representations of the group scheme $H_{\psi}$ by $\Rep(H,\psi)$. Clearly, $\Rep(H,\psi)$ is equivalent to the category $\Corep(\mathcal{O}(H)_{\psi})$ of finite dimensional comodules over the twisted coalgebra $\mathcal{O}(H)_{\psi}$ (see Section 3.2).

\begin{lemma}\label{modrep}
The following hold:

1) The category $\Fun_{\text{Coh}_f(G)}(\mathcal{M}(G,1),\mathcal{M}(H,\psi))$ is equivalent to the category ${\mathcal{M}(G,(H,\psi))}$ as an abelian category. In particular, $\Rep(G)$ is equivalent to $\text{Coh}_f(G)^*_{\mathcal{M}(G,1)}$ as a tensor category.

2) The categories
$\Fun_{\text{Coh}_f(G)}(\mathcal{M}(G,1),\mathcal{M}(H,\psi))$ and $\Rep(H,\psi)$ 
are equivalent as module categories over $\Rep(G)$.
\end{lemma}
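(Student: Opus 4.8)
The plan is to realize a $\text{Coh}_f(G)$-module functor out of the fiber-functor module category $\mathcal{M}(G,1)=\Vect$ as a biequivariant sheaf, and then to pass to the fiber over the identity $e\in G$ to land in $\Rep(H,\psi)$. For Part 1 I would send a module functor $F$ to $N:=F(k)\in\mathcal{M}(H,\psi)$. Because $\Vect$ is generated by $k$ and $X\in\text{Coh}_f(G)$ acts on $\mathcal{M}(G,1)$ through the fibre functor $X\mapsto\overline{X}$, the module-functor structure on $F$ is exactly a coherent family of isomorphisms $s_X\colon \overline{X}\otimes_k N\xrightarrow{\cong} m_*(X\boxtimes N)=X\otimes^{\mathcal{M}(H,\psi)}N$, natural in $X$. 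Reading $s_X$ off on the skyscrapers $\delta_a$ gives isomorphisms $N\cong (L_a)_*N$ (with $L_a$ left translation by $a$); assembling these over all of $G$---which is where the $\Ind/\Pro$ bookkeeping flagged in the introduction is needed---produces a single $(G,1)$-equivariant structure on $N$ for left translation, the module-functor coherence axiom becoming precisely the equivariance cocycle condition of Definition \ref{defequiv} with trivial cocycle. I expect this assembly, and the matching of the two coherences, to be the main obstacle.

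Since each $s_X$ is by construction a morphism in $\mathcal{M}(H,\psi)=\text{Coh}_f^{(H,\psi)}(G)$, it is automatically right $(H,\psi)$-equivariant, so the left structure just produced commutes with the right $(H,\psi)$-structure carried by $N$; thus $N$ is an honest $((G,1),(H,\psi))$-biequivariant sheaf (Remark \ref{biequiv}), i.e.\ $N\in\mathcal{M}(G,(H,\psi))$. A natural transformation $F\to F'$ is determined by its value on $k$, which is exactly a biequivariant morphism $N\to N'$, so $F\mapsto N$ is fully faithful; for essential surjectivity I would run the construction in reverse, setting $F_N(V):=V\otimes_k N$ with module structure read off from the left-equivariance of $N$. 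This establishes the abelian equivalence $\Fun_{\text{Coh}_f(G)}(\mathcal{M}(G,1),\mathcal{M}(H,\psi))\cong\mathcal{M}(G,(H,\psi))$.

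The bridge used for the ``in particular'' of Part 1 and for Part 2 is the equivalence $\Phi\colon\mathcal{M}(G,(H,\psi))\xrightarrow{\cong}\Rep(H,\psi)$, $N\mapsto e^*N$. Here $G\times H$ acts on $G$ by $(a,b)\cdot g=a^{-1}gb$ transitively, the stabilizer of $e$ being the diagonal $H=\{(h,h)\}$; the equivariant structure therefore turns the fibre $e^*N$ into a representation of that stabilizer, with the right-hand factor $\psi$ supplying exactly the twist, so $e^*N\in\Rep(H,\psi)$. I would check that $\Phi$ is an equivalence by exhibiting the quasi-inverse $W\mapsto\mathcal{O}(G)\otimes_k W$ (left $G$-equivariant with fibre $W$, right $(H,\psi)$-structure given by the right $H$-coaction on $\mathcal{O}(G)$ twisted by $\psi$), i.e.\ the standard descent/induction equivalence for a transitive action. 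Specializing to $H=G,\ \psi=1$ identifies $\mathcal{M}(G,(G,1))$ with $\Rep(G)$; combined with the previous paragraph this gives $\text{Coh}_f(G)^*_{\mathcal{M}(G,1)}=\End(\mathcal{M}(G,1))\cong\Rep(G)$ as abelian categories. For the tensor structure I note that composition of endofunctors acts on fibres at $e$ by $(\Psi_1,\Psi_2)\mapsto e^*\Psi_1(k)\otimes_k e^*\Psi_2(k)$ with the combined $G$-action, i.e.\ by the tensor product of $G$-representations; the monoidal coherence is inherited through the (faithful, exact) fibre functor, yielding the asserted tensor equivalence.

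For Part 2 I would compose the two equivalences into $\Fun_{\text{Coh}_f(G)}(\mathcal{M}(G,1),\mathcal{M}(H,\psi))\cong\Rep(H,\psi)$, $F\mapsto e^*F(k)$, and check compatibility with the $\Rep(G)=\text{Coh}_f(G)^*_{\mathcal{M}(G,1)}$-actions, which on the left is precomposition of functors. If $\Psi\in\End(\mathcal{M}(G,1))$ corresponds to $V\in\Rep(G)$ and $F$ to $W\in\Rep(H,\psi)$, then $(F\circ\Psi)(k)=V\otimes_k F(k)$, so $e^*((F\circ\Psi)(k))=V\otimes_k e^*F(k)$ as a vector space, and tracking the equivariant structures shows the resulting $H$-action is the tensor product of the $(H,\psi)$-action on $W$ with the restriction to $H$ of the $G$-action on $V$. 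Hence $e^*((F\circ\Psi)(k))\cong\text{Res}^G_H(V)\otimes W$ in $\Rep(H,\psi)$, which is precisely the restriction-and-tensoring $\Rep(G)$-module structure. The only delicate point is that merely the restriction $\text{Res}^G_H(V)$, and not the full $G$-structure on $V$, survives passage to the fibre over $e$---exactly because the stabilizer of $e$ is the diagonal $H$---and this is the crux of the module compatibility.
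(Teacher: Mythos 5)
Your proposal follows essentially the same route as the paper: identify $\text{Coh}_f(G)$-module functors out of $\mathcal{M}(G,1)=\Vect$ with $(G,(H,\psi))$-biequivariant sheaves via $F\mapsto F(k)$, and then pass to $\Rep(H,\psi)$ by taking the stalk at the identity, with quasi-inverse given by spreading a representation $W$ out to the sheaf with global sections $\mathcal{O}(G)\otimes_k W$. Your added bookkeeping (assembling the module-functor coherences into the equivariance datum, and tracking the restriction-and-tensoring $\Rep(G)$-module structure) only elaborates steps the paper leaves implicit, so the argument is correct and not genuinely different.
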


\begin{proof}
1) Since $\mathcal{M}(G,1)=\Vect$, a functor $\mathcal{M}(G,1)\to\mathcal{M}(H,\psi)$ is
just an $(H,\psi)-$equivariant sheaf $X$ on $G$. The fact that the
functor is a $\text{Coh}_f(G)-$module functor gives $X$ a commuting
$G-$equivariant structure for the left action of $G$ on itself,
i.e., $X$ is $(G,(H,\psi))-$biequivariant. Conversely, it is clear that any $(G,(H,\psi))-$biequivariant sheaf on $G$ defines a $\text{Coh}_f(G)-$module functor $\mathcal{M}(G,1)\to\mathcal{M}(H,\psi)$.

Finally, the category of $(G,G)-$biequivariant sheaves
on $G$ is equivalent to the category $\Rep(G)$ as a tensor category, and the second claim follows.

2) By Part 1), we may identify $\Fun_{\text{Coh}_f(G)}(\mathcal{M}(G,1),\mathcal{M}(H,\psi))$ and ${\mathcal{M}(G,(H,\psi))}$ as abelian categories.

Now, if $X$ is a $(G,(H,\psi))-$biequivariant sheaf on $G$ then the
inverse image sheaf $e^{*}(X)$ on $\Spec(k)$ (``the stalk at $1$")
acquires a structure of an $(H,\psi)-$representation via the action
of the element $(h,h^{-1})$ in $G\times H$, i.e., it is an object in
$\Rep(H,\psi)$. We have thus defined a functor $${\mathcal{M}(G,(H,\psi))}\to \Rep(H,\psi),\,\,X\mapsto e^{*}(X).$$

Conversely, an $(H,\psi)-$representation $V$ can be spread out over
$G$ and made into a $(G,(H,\psi))-$biequivariant sheaf $X$ on $G$, with global sections
$\mathcal{O}(G)\otimes_k V$. We have thus defined a functor $$\Rep(H,\psi)\to {\mathcal{M}(G,(H,\psi))},\,\,V\mapsto \mathcal{O}(G)\otimes_k V.$$

Finally, it is straightforward to verify that the two functors constructed above are mutually inverse.
\end{proof}

\begin{example}
The $\Rep(G)-$module category $\Rep(\{1\},1)=\Vect$ is the usual fiber functor on $\Rep(G)$. 
\end{example}

The proof of the next lemma is similar to the proof of Lemma \ref{modrep}.

\begin{lemma}\label{modfun1}
We have that $\text{Coh}_f(G)$ is equivalent to $\Rep(G)_{\Rep(\{1\},1)}^*$ as a tensor category, and the categories $\Fun_{\Rep(G)}(\Rep(\{1\},1),\Rep(H,\psi))$ and $\mathcal{M}(H,\psi)$ are equivalent as module categories over $\text{Coh}_f(G)$. \qed
\end{lemma}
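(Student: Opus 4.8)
The plan is to mirror the proof of Lemma \ref{modrep}, exploiting the symmetry between the two factors of $G$ in the biequivariance story, now with the roles of $\Rep(G)$ and $\text{Coh}_f(G)$ interchanged. The starting point is the duality established in the previous section: by Theorem \ref{grsch1} and Lemma \ref{modrep}, $\mathcal{M}(G,1)=\Vect$ is the indecomposable exact module category over $\text{Coh}_f(G)$ corresponding to the fiber functor, and its dual category is $\text{Coh}_f(G)^*_{\mathcal{M}(G,1)}\cong\Rep(G)$. The general double-dual principle for indecomposable exact module categories then gives that $\text{Coh}_f(G)$ is recovered as the dual of $\Rep(G)$ with respect to the image of $\mathcal{M}(G,1)$, which under the duality is precisely $\Rep(\{1\},1)=\Vect$, the fiber functor on $\Rep(G)$. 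Thus the first claim, $\text{Coh}_f(G)\cong\Rep(G)^*_{\Rep(\{1\},1)}$ as tensor categories, follows from the symmetry of the duality $(\mathcal{C},\mathcal{M})\mapsto(\mathcal{C}^*_{\mathcal{M}},\mathcal{M})$ together with Lemma \ref{modrep}(1).

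For the second claim I would unwind $\Fun_{\Rep(G)}(\Rep(\{1\},1),\Rep(H,\psi))$ exactly as in the proof of Lemma \ref{modrep}(1), but reading the biequivariance from the other side. Since $\Rep(\{1\},1)=\Vect$, a $\Rep(G)$-module functor from it to $\Rep(H,\psi)$ is the datum of an object of $\Rep(H,\psi)$ together with a compatible $G$-action coming from the $\Rep(G)$-module structure; by Lemma \ref{modrep}(2) an object of $\Rep(H,\psi)$ is the same as a $(G,(H,\psi))$-biequivariant sheaf on $G$ (via $V\mapsto\mathcal{O}(G)\otimes_k V$), and the extra $\Rep(G)$-module compatibility promotes the latent $G$-equivariance on the nose. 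The upshot is a natural identification of $\Fun_{\Rep(G)}(\Rep(\{1\},1),\Rep(H,\psi))$ with $\mathcal{M}(G,(H,\psi))=\text{Coh}_f^{(G,(H,\psi))}(G)$ as an abelian category, which by Lemma \ref{modrep}(1) is in turn $\mathcal{M}(H,\psi)=\text{Coh}_f^{(H,\psi)}(G)$.

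It remains to check that this identification is compatible with the $\text{Coh}_f(G)$-module structures, where on the left-hand side $\text{Coh}_f(G)\cong\Rep(G)^*_{\Rep(\{1\},1)}$ acts by composition of functors. I would trace through how post-composition (equivalently pre-composition, via the rigidity identifications) by an endofunctor of $\Rep(\{1\},1)$ coming from $X\in\text{Coh}_f(G)$ translates, under the dictionary $V\mapsto\mathcal{O}(G)\otimes_k V$ and the stalk-at-$1$ functor $e^*$, into the convolution action $M\mapsto m_*(X\boxtimes M)$ on $\mathcal{M}(H,\psi)$. This is the genuine content of the lemma, and the remaining steps are the routine verification that the two functors in each direction are mutually inverse and that the module constraints agree.

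The main obstacle I expect is precisely this last compatibility bookkeeping: making the action of the dual category $\Rep(G)^*_{\Rep(\{1\},1)}$ by composition match the convolution action on $\text{Coh}_f^{(H,\psi)}(G)$, since the two descriptions of $\text{Coh}_f(G)$ (as convolution sheaves versus as $\Rep(G)$-endofunctors of $\Vect$) sit on opposite sides of the duality, and one must keep careful track of which factor of $G$ carries the $G$-equivariance and which carries the $(H,\psi)$-equivariance. Everything else is formal once Lemma \ref{modrep} and Theorem \ref{grsch1} are in hand, which is why the statement can be asserted to have a proof \emph{similar to} that of Lemma \ref{modrep}.
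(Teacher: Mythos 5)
Your proposal is correct and is essentially the paper's own argument: the paper gives no details beyond declaring the proof ``similar to the proof of Lemma \ref{modrep}'', and your unwinding of $\Fun_{\Rep(G)}(\Rep(\{1\},1),\Rep(H,\psi))$ via $(G,(H,\psi))$-biequivariant sheaves on $G$, with the roles of the two sides of the biequivariance exchanged and the dictionary $V\mapsto \mathcal{O}(G)\otimes_k V$, $X\mapsto e^*(X)$, is exactly that mirroring. One small remark: rather than invoking the abstract double-dual principle for the first claim (which, since $\text{Coh}_f(G)$ is not a finite tensor category when $G$ is infinite, is not quite off-the-shelf from \cite{EO}), it is cleaner to note that the first claim is just the case $H=\{1\}$ of your direct computation of the functor category, together with matching the tensor structures --- precisely how the identification $\text{Coh}_f(G)^*_{\mathcal{M}(G,1)}\cong\Rep(G)$ is obtained in the proof of Lemma \ref{modrep}(1).
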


Lemma \ref{modfun1} prompts the following definition.

\begin{definition}
A {\em geometrical} module category $\mathcal{N}$ over
$\Rep(G)$ is an exact module category $\mathcal{N}$ such that $\Fun_{\Rep(G)}(\Rep(\{1\},1),\mathcal{N}_i)\ne 0$ for any indecomposable direct summand module category $\mathcal{N}_i$ of $\mathcal{N}$.
\end{definition}

It is clear that geometrical module categories over
$\Rep(G)$ form a full $2-$subcategory $\Mod_{geom}(\Rep(G))$ of the $2-$category $\Mod(\Rep(G))$ (see Remark \ref{2cat}). 

We can now deduce from Lemmas \ref{modrep}, \ref{modfun1} the main result of this section, which says that geometrical module categories over $\Rep(G)$ are precisely those exact module categories which come from exact module categories over $\text{Coh}_f(G)$. More precisely, we have the following theorem, which generalizes a known result in the finite group case (see \cite{O}).

\begin{theorem}\label{modg}
Let $G$ be an affine group scheme over $k$. Then the $2-$functors
$$\Mod(\text{Coh}_f(G))\to \Mod_{geom}(\Rep(G)),\,\,
\mathcal{M}\mapsto\Fun_{\text{Coh}_f(G)}(\mathcal{M}(G,1),\mathcal{M}),$$ 
and
$$\Mod_{geom}(\Rep(G))\to \Mod(\text{Coh}_f(G)),\,\,
\mathcal{N}\mapsto\Fun_{\Rep(G)}(\Rep(\{1\},1),\mathcal{N}),$$
are $2-$equivalences which are mutually inverse. 
In particular, there is
a bijection between conjugacy classes of pairs $(H,\psi)$ and
equivalence classes of indecomposable geometrical module categories over
$\Rep(G)$, assigning $(H,\psi)$ to $\Rep(H,\psi)=\Corep(\mathcal{O}(H)_{\psi})$. \qed
\end{theorem}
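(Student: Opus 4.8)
The plan is to assemble the theorem essentially mechanically from the two preceding lemmas together with the general $2$-categorical duality formalism recalled at the start of Section \ref{repg}. The conceptual engine is the fact that for an indecomposable exact module category $\mathcal{M}$ over a rigid tensor category $\mathcal{C}$, taking $\C$-functors out of $\mathcal{M}$ sets up a $2$-equivalence between $\Mod(\C)$ and $\Mod(\C^*_{\mathcal{M}})$, with $\Fun_{\C}(\mathcal{M},-)$ and $\Fun_{\C^*_{\mathcal{M}}}(\Fun_\C(\mathcal{M},\mathcal{M}),-)$ mutually inverse. I would instantiate this with $\C=\text{Coh}_f(G)$ and $\mathcal{M}=\mathcal{M}(G,1)=\Vect$, the fiber functor module category.

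First I would record the crucial identification: by Lemma \ref{modrep}(1), $\Rep(G)$ is tensor equivalent to $\text{Coh}_f(G)^*_{\mathcal{M}(G,1)}$, and symmetrically, by Lemma \ref{modfun1}, $\text{Coh}_f(G)$ is tensor equivalent to $\Rep(G)^*_{\Rep(\{1\},1)}$. Thus the two ambient tensor categories $\text{Coh}_f(G)$ and $\Rep(G)$ are precisely each other's duals with respect to the pair of distinguished fiber-functor module categories $\mathcal{M}(G,1)$ and $\Rep(\{1\},1)$, and under this identification the two fiber-functor module categories correspond to one another. Feeding this into the general duality stated above immediately yields that $\Fun_{\text{Coh}_f(G)}(\mathcal{M}(G,1),-)$ and $\Fun_{\Rep(G)}(\Rep(\{1\},1),-)$ are mutually inverse $2$-equivalences between $\Mod(\text{Coh}_f(G))$ and $\Mod(\Rep(G))$.

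The one point that genuinely requires attention is identifying the essential image of $\Fun_{\text{Coh}_f(G)}(\mathcal{M}(G,1),-)$ as exactly the geometrical module categories $\Mod_{geom}(\Rep(G))$, rather than all of $\Mod(\Rep(G))$. Here I would argue that the composite $\mathcal{N}\mapsto \Fun_{\Rep(G)}(\Rep(\{1\},1),\mathcal{N})\mapsto \Fun_{\text{Coh}_f(G)}(\mathcal{M}(G,1),\Fun_{\Rep(G)}(\Rep(\{1\},1),\mathcal{N}))$ recovers $\mathcal{N}$ precisely when the intermediate module category is nonzero on each indecomposable summand, which by the very definition of \emph{geometrical} is the condition $\Fun_{\Rep(G)}(\Rep(\{1\},1),\mathcal{N}_i)\ne 0$; conversely anything in the image of the first $2$-functor satisfies this, since it is built out of $\mathcal{M}(G,1)$ and the duality sends $\mathcal{M}(G,1)$ to $\Rep(\{1\},1)$. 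The main obstacle, and the step I would be most careful about, is verifying that the two $2$-functors are genuinely mutually inverse (not merely quasi-inverse up to the indecomposability hypothesis), which amounts to checking that applying $\Fun_{\C^*_{\mathcal{M}}}(\Fun_\C(\mathcal{M},\mathcal{M}),-)$ returns the double-dual canonical equivalence; this I would reduce to the double-duality statement $\mathcal{C}\cong (\C^*_{\mathcal{M}})^*_{\mathcal{M}}$ applied to $\text{Coh}_f(G)$, using Lemma \ref{modfun1}.

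Finally, for the concluding bijection I would restrict both $2$-equivalences to objects and combine with Theorem \ref{grsch1}: indecomposable exact module categories over $\text{Coh}_f(G)$ are parameterized by conjugacy classes of pairs $(H,\psi)$ via $(H,\psi)\mapsto \mathcal{M}(H,\psi)=\text{Coh}_f^{(H,\psi)}(G)$, and Lemma \ref{modrep}(2) identifies $\Fun_{\text{Coh}_f(G)}(\mathcal{M}(G,1),\mathcal{M}(H,\psi))$ with $\Rep(H,\psi)=\Corep(\mathcal{O}(H)_{\psi})$ as $\Rep(G)$-module categories. Transporting the bijection of Theorem \ref{grsch1} through the $2$-equivalence then gives the desired parameterization of indecomposable geometrical $\Rep(G)$-module categories by conjugacy classes of pairs $(H,\psi)$.
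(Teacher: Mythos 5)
Your proposal is correct and follows essentially the same route as the paper, which likewise deduces Theorem \ref{modg} directly from Lemmas \ref{modrep} and \ref{modfun1} together with the dual-category formalism and Theorem \ref{grsch1} (the paper in fact leaves the assembly to the reader, marking the theorem with a \qed). Your extra care in isolating the essential-image question --- that the general $2$-equivalence $\Mod(\C)\simeq\Mod(\C^*_{\mathcal{M}})$ must be cut down to $\Mod_{geom}(\Rep(G))$ because $\text{Coh}_f(G)$ need not be finite, cf.\ Remark \ref{notall} --- is exactly the point the paper's definition of \emph{geometrical} is designed to handle.
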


\begin{remark}\label{notall} 1) If $G$ is \emph{not} finite, $\Rep(G)$ may very well have nongeometrical module categories (even fiber functors). For example, let $G:=\mathbb{G}_a^2$ over $\mathbb{C}$, let $J:=\exp(x\otimes
y)$, where $x,y$ are a basis of the (nilpotent) Lie algebra $\mathbb{C}^2$ (this
makes sense on $G-$modules, since on them $x,y$ are nilpotent so the
Taylor series for exponential terminates), and let $\mathcal{N}$ be
the semisimple $\Rep(G)-$module category of rank $1$ corresponding
to the twist $J$. Then the twisted algebra $\mathcal{O}(G)_J$ is the
Weyl algebra generated by $x,y$ with $yx-xy=1$, so it does not have
finite dimensional modules. Hence,
$\Fun_{\Rep(G)}(\mathcal{M}(G,1),\mathcal{N})=0$.

Note that
there is no $2-$cocycle $\psi$ with values in $\mathbb{G}_m$ (there
is one with values in $\mathbb{G}_a$, namely,
$\psi((x_1,x_2),(y_1,y_2))=x_1y_2-x_2y_1$, but to make it take
values in $\mathbb{G}_m$, one needs to take exponential, which is not algebraic).

2) The classification of fiber functors on $\Rep(G)$, where $G$ is a unipotent algebraic group, is given in \cite{EG3}. See also \cite{EG2} for the construction of fiber functors on $\Rep(G)$ for
other algebraic groups. However, the classification of fiber functors is not known for $SL_n$, $n\ge 4$ (it is known for $n\le 3$ \cite{Oh1}, \cite{Oh2}). 
\end{remark}

\subsection{Semisimple module categories of rank $1$.} Recall that the set of equivalence classes of semisimple module
categories over $\Rep(G)$ of rank $1$ is in bijection with the set
of equivalence classes of tensor
structures on the forgetful functor $\Rep(G)\to \Vect$. Therefore,
Theorem \ref{modg} implies that the
conjugacy class of any pair $(H,\psi)$ for which the category
$\Corep(\mathcal{O}(H)_{\psi})$ is semisimple of rank $1$ gives rise
to an equivalence class of a tensor structure on the forgetful functor
$\Rep(G)\to \Vect$. Clearly, for such pair $(H,\psi)$, $H$ must be a
\emph{finite} group subscheme of $G$ (as a simple coalgebra must be finite dimensional). This observation suggests the
following definition.

\begin{definition}\label{nondeg}
Let $H$ be a finite group scheme over $k$. We call a $2-$cocycle $\psi:H\times H\to \mathbb{G}_m$ (equivalently, a twist
$\psi$ for $\mathcal{O}(H)=k[H]^*$) \emph{nondegenerate} if the
category $\Corep(\mathcal{O}(H)_{\psi})$ of finite dimensional
comodules over $\mathcal{O}(H)_{\psi}$ is equivalent to $\Vect$
(i.e., if the coalgebra $\mathcal{O}(H)_{\psi}$ is simple).
\end{definition}

We thus have the following corollary.

\begin{corollary}\label{twists}
The conjugacy class of a pair $(H,\psi)$, where $H$ is a finite
closed group subscheme of $G$ and $\psi:H\times H\to \mathbb{G}_m$
is a nondegenerate $2-$cocycle, gives rise to an equivalence class
of a Hopf $2-$cocycle for $\mathcal{O}(G)$. \qed
\end{corollary}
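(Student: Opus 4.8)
The plan is to reduce Corollary \ref{twists} to the already-established Theorem \ref{modg}, rather than to prove anything new from scratch. The key observation is that a \emph{Hopf $2$-cocycle for $\mathcal{O}(G)$} (equivalently, a tensor structure on the forgetful fiber functor $\Rep(G)\to\Vect$) is precisely the same datum as an equivalence class of a semisimple $\Rep(G)$-module category of rank $1$, as recalled at the start of Section 4.2. Thus the entire content of the corollary is that the pair $(H,\psi)$ with $\psi$ \emph{nondegenerate} produces such a rank-$1$ semisimple module category.

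First I would invoke Theorem \ref{modg}: the conjugacy class of $(H,\psi)$ corresponds to the indecomposable geometrical module category $\Rep(H,\psi)=\Corep(\mathcal{O}(H)_{\psi})$ over $\Rep(G)$. Since $H$ is finite and $\psi$ is a twist for the finite dimensional Hopf algebra $\mathcal{O}(H)=k[H]^*$, this category is finite. Next, I would apply Definition \ref{nondeg}: the hypothesis that $\psi$ is nondegenerate means exactly that the twisted coalgebra $\mathcal{O}(H)_{\psi}$ is simple, i.e.\ $\Corep(\mathcal{O}(H)_{\psi})\cong\Vect$. Therefore the geometrical module category attached to $(H,\psi)$ is equivalent to $\Vect$, which is semisimple of rank $1$.

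Finally, I would match this rank-$1$ semisimple $\Rep(G)$-module category with a tensor structure on the forgetful functor. Because $\mathcal{M}=\Vect$ is a module category over $\Rep(G)$ whose single simple object is acted on by $\Rep(G)$, the module structure furnishes a monoidal structure on the underlying-vector-space functor $\Rep(G)\to\Vect$; unwinding the module associativity constraint gives precisely the Hopf $2$-cocycle (Drinfeld twist) $J$ for $\mathcal{O}(G)$, as in the identification recalled at the beginning of the subsection. Conjugate pairs $(H,\psi)$ give equivalent module categories by Theorem \ref{modg}, hence gauge-equivalent cocycles, so the assignment is well defined on conjugacy classes.

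I do not expect a genuine obstacle here, since every ingredient has been prepared: the only point requiring a little care is the bookkeeping identifying ``rank-$1$ semisimple module category'' with ``tensor structure on the fiber functor,'' but this is the standard dictionary already cited in the text and needs no new argument. The corollary is therefore an immediate consequence of Theorem \ref{modg} together with Definition \ref{nondeg}, which is why it is stated with a \textbf{\qed} and no separate proof.
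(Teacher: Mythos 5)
Your proposal is correct and follows exactly the paper's own route: the corollary is deduced from Theorem \ref{modg} together with Definition \ref{nondeg}, using the standard bijection (recalled at the start of Section 4.2) between rank-$1$ semisimple module categories over $\Rep(G)$ and tensor structures on the forgetful functor. Nothing essential differs from the argument the paper intends by its \qed.
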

 
\begin{remark}
Finite group schemes having a nondegenerate $2-$cocycle may be
called group schemes of \emph{central type} in analogy with finite
abstract groups.
\end{remark}

\section{group scheme theoretical categories}\label{gth}

In this section we extend the classes of rigid tensor categories $\Rep(G)$ and $\text{Coh}_f(G)$ to a larger class of {\em group scheme theoretical categories}, exactly in the same way as it is done for finite groups \cite{O}.

Let $G$ be an affine group scheme over $k$, and let $\omega\in H^3(G,\mathbb{G}_m)$ be a normalized $3-$cocycle. Equivalently, $\omega\in \mathcal{O}(G)^{\ot 3}$ is a
\emph{Drinfeld associator} for $\mathcal{O}(G)$, i.e., $\omega$ is an
invertible element satisfying the equations
$$(id\ot id\ot \Delta)(\omega)(\Delta\ot id\ot id)(\omega)=(1\ot \omega)(id\ot \Delta\ot id)(\omega)(\omega\ot 1),$$
$$(\varepsilon\ot id\ot id)(\omega)=(id\ot \varepsilon\ot id)(\omega)=(id\ot id\ot \varepsilon)(\omega)=1.$$

The proof of the following lemma is straightforward.

\begin{lemma}\label{omega}
The category $\text{Coh}_f(G)$ (resp., $\text{Coh}(G)$) with tensor product given by convolution of sheaves and associativity constraint given by the action of $\omega$ (viewed as an invertible element in $\mathcal{O}(G)^{\ot 3}$) is a rigid tensor category (resp., tensor category). \qed
\end{lemma}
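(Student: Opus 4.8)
The plan is to verify that the convolution tensor product on $\text{Coh}_f(G)$ (resp.\ $\text{Coh}(G)$), together with the associativity constraint built from the $3$-cocycle $\omega$, satisfies the axioms of a (rigid) tensor category. The key observation is that everything reduces, via the tensor equivalence $\text{Coh}_f(G)\cong\Rep(\mathcal{O}(G))$ already recorded in Section~\ref{cohg}, to a purely Hopf-algebraic computation on the commutative Hopf algebra $\mathcal{O}(G)$. Under this equivalence the convolution $m_*(X\boxtimes Y)$ corresponds to the tensor product of $\mathcal{O}(G)$-modules (using the comultiplication $\Delta$ to define the module structure on $X\ot_k Y$), and the element $\omega\in\mathcal{O}(G)^{\ot 3}$ acts on any triple tensor product $X\ot Y\ot Z$ by its image under the representation. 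So the real content is that this action defines a functorial, invertible associativity constraint satisfying the pentagon and triangle axioms.

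First I would make precise how $\omega$ yields a natural transformation $a_{X,Y,Z}:(X\ot Y)\ot Z\to X\ot(Y\ot Z)$: since $\omega$ is an invertible element of $\mathcal{O}(G)^{\ot 3}$ and $X\ot Y\ot Z$ is naturally a module over $\mathcal{O}(G)^{\ot 3}$ (before restricting along the triple comultiplication), the action of $\omega$ gives an isomorphism of the underlying vector spaces, and I would check it intertwines the two $\mathcal{O}(G)$-module structures coming from the two bracketings; this is exactly the statement that $\omega$ is a $3$-cocycle for the trivial coefficient system, i.e.\ it commutes appropriately with $\Delta$. Naturality in $X,Y,Z$ is immediate because $\omega$ acts by a fixed element and all morphisms are $\mathcal{O}(G)$-linear. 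Invertibility is clear since $\omega$ is invertible.

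Next I would verify the pentagon and triangle identities. The pentagon axiom for $a$ translates \emph{verbatim} into the first displayed cocycle equation for $\omega$,
$$(id\ot id\ot \Delta)(\omega)(\Delta\ot id\ot id)(\omega)=(1\ot \omega)(id\ot \Delta\ot id)(\omega)(\omega\ot 1),$$
applied to a fourfold tensor product $W\ot X\ot Y\ot Z$: each factor corresponds to one edge of the pentagon after pushing $\Delta$ into the appropriate slot. Similarly the triangle axiom follows from the normalization conditions $(\varepsilon\ot id\ot id)(\omega)=(id\ot\varepsilon\ot id)(\omega)=(id\ot id\ot\varepsilon)(\omega)=1$, which say that $\omega$ acts as the identity whenever one of the three arguments is the unit object (the structure sheaf $\mathcal{O}(G)$, i.e.\ the trivial $\mathcal{O}(G)$-module $k$). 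The unit object, unit constraints, and the rigid (dual) structure are unchanged from the untwisted category $\text{Coh}_f(G)$, since altering only the associativity constraint by a $3$-cocycle does not affect existence of duals; I would note that left and right duals still exist because the underlying category is unchanged and the evaluation/coevaluation maps can be adjusted by the (invertible) action of $\omega$ to remain coherent.

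The only genuinely nonroutine point—and the one the lemma's ``straightforward'' understates—is the bookkeeping in matching the five edges of the pentagon diagram to the five tensor factors in the cocycle equation, keeping track of which copy of $\Delta$ and which slot each application of $\omega$ lands in. I do not expect any conceptual obstacle: once the dictionary between convolution of sheaves and the module tensor product over $\mathcal{O}(G)$ is fixed, all three axioms are literal transcriptions of the defining equations for a Drinfeld associator, so the proof is indeed a direct check. The same argument applies word-for-word to $\text{Coh}(G)\cong$ the category of finitely generated $\mathcal{O}(G)$-modules, giving the tensor (non-rigid) case.
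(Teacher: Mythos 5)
Your verification is correct and is exactly the routine check the paper omits (the paper states the lemma with only the remark that the proof is straightforward): translating convolution into the tensor product of $\mathcal{O}(G)$-modules via $\Delta$ and reading the pentagon and triangle axioms off the two displayed associator identities is the intended argument. One small correction: the fact that the action of $\omega$ on $X\otimes Y\otimes Z$ is a morphism in the category is automatic from the commutativity of $\mathcal{O}(G)^{\otimes 3}$ together with coassociativity (and likewise the unit object is the skyscraper $\delta_e$, i.e.\ the module $k$ via $\varepsilon$, not the structure sheaf), whereas the $3$-cocycle condition is used only for the pentagon.
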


Let us denote the rigid tensor category (resp., tensor category) from Lemma \ref{omega} by $\text{Coh}_f(G,\omega)$ (resp., $\text{Coh}(G,\omega)$).

Let $H$ be a closed group subscheme of $G$, and let $\psi\in C^2(H,\mathbb{G}_m)$ be a normalized $2-$cochain such that $d\psi=\omega_{|H}$. Let $\text{Coh}_f^{(H,\psi)}(G,\omega)$ be the category of $(H,\psi)-$equivariant coherent sheaves on $(G,\omega)$ \emph{with finite support in
$G/H$}; it is defined similarly to $\text{Coh}_f^{(H,\psi)}(G)$ (the case $\omega=1$) with the obvious adjustments. The proof of the following lemma is similar to the proof of Lemma \ref{simpleex}.

\begin{lemma}\label{momega}
The category $\text{Coh}_f^{(H,\psi)}(G,\omega)$ admits a structure of an indecomposable exact module category over $\text{Coh}_f(G,\omega)$ given by convolution of sheaves. \qed
\end{lemma}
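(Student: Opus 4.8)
The plan is to follow, essentially verbatim, the development carried out for the associator-free case in Lemma~\ref{simpleex}, Proposition~\ref{grsch}(1), and Theorem~\ref{grsch1}, keeping careful track of the two places where the nontrivial associator $\omega$ enters. The module action is again convolution, $X\ot^{\mathcal{M}}M:=m_*(X\boxtimes M)$, where $\mathcal{M}:=\text{Coh}_f^{(H,\psi)}(G,\omega)$, and the associativity constraint of the action is the restriction of the associativity constraint of $\text{Coh}_f(G,\omega)$, i.e., it is implemented by pullbacks of $\omega$. The essential new point is that, whereas in the case $\omega=1$ the associativity (module) axiom for this action reduced to the $2$-cocycle condition on $\psi$, here it reduces to the twisted identity $d\psi=\omega_{|H}$ assumed in the hypothesis.

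First I would establish that the action is well defined on $\mathcal{M}$. Exactly as in Lemma~\ref{simpleex}(3), for $X\in\text{Coh}_f(G,\omega)$ and $M\in\mathcal{M}$ the sheaf $X\boxtimes M$ is an $(H,\psi)$-equivariant sheaf on $G\times G$ (for the right $H$-action on the second factor), and since $m:G\times G\to G$ is $H$-equivariant, $m_*$ carries it to an object of $\mathcal{M}$; finite support in $G/H$ is preserved because $X$ has finite support. Next I would produce the canonical simple generator $\delta=\delta_{(H,\psi)}:=(\iota_H)_*\mathcal{O}(H)$, whose $(H,\psi)$-equivariant structure on $(G,\omega)$ is built as in Lemma~\ref{simpleex}(1)--(2); this construction lives entirely on $H$, where the relevant cochain datum is $\psi$. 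As in Proposition~\ref{grsch}(1) and Theorem~\ref{grsch1}, the full subcategory of those $X$ with $X\ot^{\mathcal{M}}\delta$ a multiple of $\delta$ is the tensor subcategory $\text{Coh}_f(H,\omega_{|H})$, every object of $\mathcal{M}$ is of the form $X\ot^{\mathcal{M}}\delta$, and $\delta$ is simple; hence $\delta$ generates $\mathcal{M}$ and indecomposability follows from Corollary~\ref{simgen}.

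For exactness I would invoke Proposition~\ref{exact}: it suffices to show that $P\ot^{\mathcal{M}}X$ is projective in $\Pro(\mathcal{M})$ for every projective $P\in\Pro(\text{Coh}_f(G,\omega))$ and every $X\in\mathcal{M}$, and by the generation just established it is enough to treat $X=\delta$. Since every such $P$ is a completed direct sum of the indecomposable projectives $P_g=\delta_g\ot P_1$ and each $\delta_g$ is invertible, the argument in Theorem~\ref{grsch1} reduces this to the single computation that $P_1\ot^{\mathcal{M}}\delta$ is projective. That computation is purely at the level of underlying objects of $\Pro(\mathcal{M})$, where the associator $\omega$ (an isomorphism, hence invisible to the abelian structure) plays no role, so it yields $P_1\ot^{\mathcal{M}}\delta\cong\widehat{\mathcal{O}(H)_1}\ot_k P(\delta)$ exactly as before, which is projective.

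The main obstacle is the coherence verification underlying the first two paragraphs: one must check that the $(H,\psi)$-equivariance data is compatible with the associativity constraint of the action, which is implemented by pullbacks of $\omega$. Equivalently, passing to the comodule description of Proposition~\ref{grsch}, one must check that the twisted comultiplication $\Delta_{\psi}(f):=\Delta(f)\psi$ makes $\mathcal{O}(H)_{\psi}$ a coalgebra \emph{in the category} $\text{Coh}_f(G,\omega)$, where coassociativity is tested against the associator $\omega$. In the case $\omega=1$ this was the plain $2$-cocycle condition on $\psi$; with a nontrivial associator the defect of coassociativity of $\Delta_{\psi}$ is measured precisely by $d\psi$, so coassociativity holds if and only if $d\psi=\omega_{|H}$, which is exactly the hypothesis. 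This single computation, the analog of the $2$-cocycle verification in Lemma~\ref{simpleex}(1), is the only place where the cochain, rather than cocycle, nature of $\psi$ is used, and once it is in place the remaining module-category axioms follow formally.
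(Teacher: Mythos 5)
Your proposal is correct and follows essentially the route the paper intends: the paper's ``proof'' of this lemma is simply the declaration that it is similar to Lemma~\ref{simpleex} (with the exactness part deferred to the analogue of Theorem~\ref{grsch1}), and you have filled in exactly those adaptations. In particular you correctly isolate the one genuinely new point, namely that the module associativity axiom now reduces to $d\psi=\omega_{|H}$ rather than the plain $2$-cocycle condition on $\psi$.
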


The proof of the following classification result is similar to the proof of Theorem \ref{grsch1}.

\begin{theorem}\label{modomega}
There is a bijection between conjugacy classes of pairs $(H,\psi)$ and
equivalence classes of indecomposable exact module categories over
$\text{Coh}_f(G,\omega)$, assigning $(H,\psi)$ to $\text{Coh}_f^{(H,\psi)}(G,\omega)$. \qed
\end{theorem}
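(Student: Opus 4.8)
The plan is to follow the structure of the proof of Theorem~\ref{grsch1} almost verbatim, replacing the trivial associator by $\omega$ throughout and checking that every step survives. The statement is that $(H,\psi)\mapsto \text{Coh}_f^{(H,\psi)}(G,\omega)$ gives a bijection between conjugacy classes of pairs and equivalence classes of indecomposable exact module categories over $\text{Coh}_f(G,\omega)$. By Lemma~\ref{momega} each such category is indeed an indecomposable exact module category, so the content is to produce an inverse construction and to verify it is well defined on conjugacy classes.

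First I would address surjectivity. Given an indecomposable exact module category $\mathcal{M}$ over $\text{Coh}_f(G,\omega)$, pick a simple generator $\delta$ (it exists by Corollary~\ref{simgen}) and form the full subcategory $\mathcal{C}:=\{X\in \text{Coh}_f(G,\omega)\mid X\ot^{\mathcal{M}}\delta=\dim_k(X)\delta\}$. As in Theorem~\ref{grsch1} this is a tensor subcategory, hence of the form $\text{Coh}_f(H,\omega_{|H})$ for a closed group subscheme $H\subseteq G$. The functor $F(X)=\Hom_{\mathcal{M}}(\delta,X\ot^{\mathcal{M}}\delta)$ is again canonically identified with the underlying vector space $\overline{X}$, so the associativity constraint of $\mathcal{M}$ endows $F$ with a tensor structure and thereby produces an invertible $\psi\in \mathcal{O}(H)^{\ot 2}$. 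The crucial new point, and the place where $\omega$ enters, is the coherence (pentagon) identity satisfied by this tensor structure: because the associator on $\text{Coh}_f(H,\omega_{|H})$ is $\omega_{|H}$ rather than $1$, the element $\psi$ no longer satisfies the cocycle equation $d\psi=1$ but instead $d\psi=\omega_{|H}$, i.e.\ $\psi$ is a normalized $2$-cochain trivializing $\omega_{|H}$, exactly the data allowed in the definition of $\text{Coh}_f^{(H,\psi)}(G,\omega)$. Having extracted $(H,\psi)$, the reconstruction of $\mathcal{M}$ as $\text{Coh}_f^{(H,\psi)}(G,\omega)$ proceeds by the same $\overline{\Hom}(\delta,\delta)$ computation as before: for any closed point $g$, $\delta_g\ot^{\mathcal{M}}\delta$ is simple and isomorphic to $\delta$ iff $g\in H$, so $\overline{\Hom}(\delta,\delta)$ is computed to be $\widehat{\mathcal{O}(H)_\psi}$ by Yoneda, and the Barr--Beck argument identifies $\mathcal{M}$ with the comodule category, hence with $\text{Coh}_f^{(H,\psi)}(G,\omega)$ by the analogue of Proposition~\ref{grsch}.

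For injectivity and well-definedness on conjugacy classes, I would record that two pairs $(H,\psi)$ and $(H',\psi')$ yield equivalent module categories iff they are conjugate under $G$: a module equivalence carries the distinguished simple generator to a simple generator, and the stabilizer subcategory $\mathcal{C}$ is an invariant of $\mathcal{M}$ up to the choice of generator, which is pinned down up to the action of $G(k)$ by conjugation (translation by $\delta_g$). The cochain $\psi$ is determined by $\delta$ only up to coboundary and up to this conjugation, which is precisely the equivalence relation defining conjugacy classes of pairs in the $\omega$-twisted setting.

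The main obstacle I expect is bookkeeping the associativity constraints correctly so that the pentagon for $F$ yields exactly $d\psi=\omega_{|H}$ rather than some reindexed or inverse version; this is where the $\omega=1$ proof is silently using $d\psi=1$, and one must make sure the normalization conventions for $\omega$, for the tensor structure on $F$, and for the $(H,\psi)$-equivariance diagram in Definition~\ref{defequiv} (as adjusted in the paragraph preceding Lemma~\ref{momega}) are mutually consistent. Everything else, including exactness (which is already granted by Lemma~\ref{momega}) and the comodule reconstruction, is a routine transcription of the proof of Theorem~\ref{grsch1}.
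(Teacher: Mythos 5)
Your proposal is correct and is exactly the argument the paper intends: the paper's own "proof" consists of the single remark that the argument is similar to that of Theorem \ref{grsch1}, and your transcription --- with the one substantive adjustment that the pentagon identity for the fiber functor $F$ on $\mathcal{C}\cong\text{Coh}_f(H,\omega_{|H})$ now yields $d\psi=\omega_{|H}$ instead of $d\psi=1$ --- is precisely the intended adaptation. Your identification of the coherence bookkeeping as the only point requiring care matches where the genuine (if routine) work lies.
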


Let us denote by $\C(G,H,\omega,\psi)$ the dual category of $\text{Coh}_f(G,\omega)$ with respect to its indecomposable exact module category $\text{Coh}_f^{(H,\psi)}(G,\omega)$. We have that $\C(G,H,\omega,\psi)$ is
equivalent to the tensor category of $((H,\psi),(H,\psi))-$biequivariant sheaves on $(G,\omega)$, supported on finitely many
left $H-$cosets (equivalently, right $H-$cosets),
with tensor product given by convolution of sheaves.

\begin{definition}
A rigid tensor category which is tensor equivalent to some $\C(G,H,\omega,\psi)$ is called a group scheme theoretical category.
\end{definition}

\begin{example} 
1) Both $\Rep(G)$ and $\text{Coh}_f(G,\omega)$ are group scheme theoretical categories. 

2) ({\em The center}) 
The center $\mathcal{Z}(\text{Coh}_f(G))$ of $\text{Coh}_f(G)$ is a group scheme theoretical category. Indeed, it is tensor equivalent to $\C(G\times G,G,1,1)$, where $G$ is considered as a closed group subscheme of $G\times G$ via the diagonal morphism $\Delta:G\to G\times G$.

For finite groups $G$, $\text{Coh}_f(G)=\Rep(\Fun(G))$, so  $\mathcal{Z}(\Rep(\Fun(G)))$ is equivalent to $\Rep(D(\Fun(G)))$ (where 
$D(\Fun(G))=\mathbb{C}[G]\ltimes \Fun(G)$ is the Drinfeld double), and it is well known that $\Rep(\mathbb{C}[G]\ltimes \Fun(G))$ is equivalent to the category of $G-$equivariant coherent sheaves on $G$.

In the algebraic group case, the same is true. Let us give an instructive example: suppose that $G$ is a semisimple adjoint algebraic group over $\mathbb{C}$ (i.e., with trivial center).
Let $\mathfrak{g}^*=\text{Lie}(G)^*$ be the coadjoint representation, regarded as a commutative algebraic group (multiple of $\mathbb{G}_a$). A coherent sheaf with finite dimensional space of global sections must be supported
on a finite conjugacy class, i.e., at $1$. So we are talking about $G-$equivariant coherent sheaves on $G$ supported (scheme-theoretically) at $1$.
This is the same as sheaves on $\mathfrak{g}$ with the same property (by using the exponential map), i.e., $G-$equivariant $S\mathfrak{g}^*-$modules, i.e., $G-$equivariant algebraic representations of $\mathfrak{g}^*$, which is the same as representations of the semidirect product $G\ltimes \mathfrak{g}^*$. Therefore, the center $\mathcal{Z}(\text{Coh}_f(G))$ is braided equivalent to $\Rep(G\ltimes \mathfrak{g}^*)$ equipped with its natural (nonsymmetric) braided structure. \footnote{The $R-$matrix for this category is $\exp(\sum x_i\otimes x_i^*)$, where $x_i$ is a basis of $\mathfrak{g}$ and $x_i^*$ the dual basis of $\mathfrak{g}^*$. Note that applying $R$ in $X\otimes Y$,
where $X,Y\in \Rep(G\ltimes \mathfrak{g}^*)$, the exponential will turn into a finite sum (almost all terms of the Taylor series of the exponential will be zero)
because $\mathfrak{g}^*$ acts nilpotently on $Y$.} 
\end{example}

\begin{definition}
Let $\C:=\C(G,H,\omega,\psi)$, $\mathcal{M}(H,\psi):=\text{Coh}_f^{(H,\psi)}(G,\omega)$.
A {\em geometrical} module category $\mathcal{N}$ over
$\C$ is an exact module category $\mathcal{N}$ such that $\Fun_{\C}(\mathcal{M}(H,\psi),\mathcal{N}_i)\ne 0$ for any indecomposable direct summand module category $\mathcal{N}_i$ of $\mathcal{N}$.
\end{definition}

It is clear that geometrical module categories over
$\C(G,H,\omega,\psi)$ form a full $2-$subcategory $\Mod_{geom}(\C(G,H,\omega,\psi))$ of the $2-$category $\Mod(\C(G,H,\omega,\psi))$ (see Remark \ref{2cat}).
 
The following extends Theorem \ref{modg} (see also Remark \ref{modgtf} in the next section).
 
\begin{theorem}\label{modgth}
Let $\C:=\C(G,H,\omega,\psi)$, $\mathcal{M}(H,\psi):=\text{Coh}_f^{(H,\psi)}(G,\omega)$. Then the $2-$functors
$$\Mod(\text{Coh}_f(G,\omega))\to \Mod_{geom}(\C),\,\,
\mathcal{M}\mapsto\Fun_{\text{Coh}_f(G,\omega)}(\mathcal{M}(H,\psi),\mathcal{M}),$$ 
and
$$\Mod_{geom}(\C)\to \Mod(\text{Coh}_f(G)),\,\,
\mathcal{N}\mapsto\Fun_{\C}(\mathcal{M}(H,\psi),\mathcal{N}),$$
are $2-$equivalences which are mutually inverse. 
In particular, there is
a bijection between conjugacy classes of pairs $(H',\psi')$, where
$H'$ is a closed group subscheme of $G$ and $\psi'\in
C^2(H',\mathbb{G}_m)$ satisfies $d\psi'=\omega_{|H'}$, and equivalence classes of indecomposable geometrical module categories over $\C(G,H,\omega,\psi)$.\qed
\end{theorem}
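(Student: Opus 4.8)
The plan is to deduce the theorem formally from the general duality theory of exact module categories, exactly as Theorem \ref{modg} is deduced from Lemmas \ref{modrep} and \ref{modfun1}, now with the pair $(\text{Coh}_f(G),\Rep(G))$ replaced by $(\text{Coh}_f(G,\omega),\C)$ and the fiber-functor module categories replaced by $\mathcal{M}(H,\psi)$. Write $\D := \text{Coh}_f(G,\omega)$ and $\mathcal{M} := \mathcal{M}(H,\psi) = \text{Coh}_f^{(H,\psi)}(G,\omega)$. By Lemma \ref{momega}, $\mathcal{M}$ is an indecomposable exact module category over $\D$, and by the very definition of $\C = \C(G,H,\omega,\psi)$ we have $\C = \D^*_{\mathcal{M}} = \End_{\D}(\mathcal{M})$; as recalled at the start of Section \ref{repg}, $\mathcal{M}$ is then also an indecomposable exact module category over $\C$.

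The geometric input is the double-dual identification $\C^*_{\mathcal{M}} \cong \D$ as tensor categories. I would establish it by unwinding the biequivariant description of $\C$ recorded before the theorem: a $\C$-endofunctor of $\mathcal{M}$ is, after this unwinding, a $(G,\omega)$-equivariant sheaf for the left translation action, and the category of these is $\text{Coh}_f(G,\omega)$. This is the $\omega$-twisted analog of the first assertion of Lemma \ref{modfun1}; its companion $\C = \D^*_{\mathcal{M}}$ holds by definition here and is the $\omega$-twisted counterpart of Lemma \ref{modrep}(1). Both are proved verbatim as in the case $\omega = 1$: the associator $\omega$ and the cochain $\psi$ enter only through the coherence constraints they impose and play no role in the identification of the underlying abelian categories and functors.

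With the two dual-category identifications in hand, the two 2-functors of the statement are the standard duality 2-functors: $\Fun_{\D}(\mathcal{M}(H,\psi),\mathcal{M})$ is a module category over $\D^*_{\mathcal{M}(H,\psi)} = \C$ via composition of functors, and $\Fun_{\C}(\mathcal{M}(H,\psi),\mathcal{N})$ is a module category over $\C^*_{\mathcal{M}(H,\psi)} \cong \D$ in the same way. General duality theory (\cite{EO}, and \cite{O} in the finite case) supplies canonical unit and counit natural equivalences making these mutually inverse, so the remaining work is to match domains. Here the geometrical restriction enters exactly as in Theorem \ref{modg}: I would check that $\Fun_{\D}(\mathcal{M}(H,\psi),\mathcal{M})$ is always geometrical over $\C$, each indecomposable summand having nonzero $\Fun_{\C}(\mathcal{M}(H,\psi),-)$ by the counit equivalence together with Theorem \ref{modomega}, and conversely that geometricality of $\mathcal{N}$ is precisely what makes the counit $\Fun_{\D}(\mathcal{M}(H,\psi),\Fun_{\C}(\mathcal{M}(H,\psi),\mathcal{N})) \to \mathcal{N}$ an equivalence. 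The final bijection on equivalence classes of indecomposables then follows by transporting the classification of Theorem \ref{modomega} across this 2-equivalence.

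The main obstacle is the double-dual identification $\C^*_{\mathcal{M}} \cong \text{Coh}_f(G,\omega)$ together with the verification that unit and counit are genuine equivalences when $G$ is not finite. As in Section \ref{cohg}, all of this must be carried out with Ind- and Pro-objects, and it is exactly at the counit that one cannot expect an equivalence for every $\mathcal{N}$ over $\C$; this is what forces the restriction to $\Mod_{geom}(\C)$ on the dual side. Once these identifications are secured, the remainder is a formal consequence of the duality formalism.
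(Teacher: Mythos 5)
Your proposal is correct and follows essentially the same route the paper intends: the paper offers no separate argument for Theorem \ref{modgth}, presenting it as the $\omega$-twisted extension of Theorem \ref{modg}, whose proof via the dual-category identifications of Lemmas \ref{modrep} and \ref{modfun1} (biequivariant sheaves realizing $\Fun_{\D}(\mathcal{M}(H,\psi),-)$ and the double dual) is exactly what you reproduce, including the observation that geometricality is precisely the condition making the counit an equivalence in the non-finite case.
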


\section{exact module categories over finite group schemes}\label{fin}

In this section $G$ will denote a {\em finite} group scheme over $k$ (see Section 2.1.1).

\subsection{Module categories.} Thanks to \cite[Theorem 3.31]{EO}, Theorem \ref{modg} can be strengthened in the
finite case to give a canonical bijection between exact module
categories over $\text{Coh}_f(G)=\text{Coh}(G)$ and $\Rep(G)$ (i.e., for finite group schemes, every exact module category over $\Rep(G)$ is geometrical). Namely, we have the following result.

\begin{theorem}\label{modgfin}
Let $G$ be a finite group scheme over $k$. The $2-$functors
$$\Mod(\text{Coh}(G))\to \Mod(\Rep(G)),\,\,
\mathcal{M}\mapsto\Fun_{\text{Coh}(G)}(\mathcal{M}(G,1),\mathcal{M}),$$ 
and
$$\Mod(\Rep(G))\to \Mod(\text{Coh}(G)),\,\,
\mathcal{N}\mapsto\Fun_{\Rep(G)}(\Rep(\{1\},1),\mathcal{N}),$$
are $2-$equivalences which are mutually inverse.  In particular, the
equivalence classes of indecomposable exact module categories over
$\Rep(G)=\Rep(k[G])$ are parameterized by the conjugacy classes of
pairs $(H,\psi)$, where $H$ is a closed group subscheme of $G$ and
$\psi:H\times H\to \mathbb{G}_m$ is a normalized $2-$cocycle. \qed
\end{theorem}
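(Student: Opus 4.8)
The plan is to deduce Theorem \ref{modgfin} from Theorem \ref{modg} by showing that, when $G$ is finite, \emph{every} exact module category over $\Rep(G)$ is geometrical, so that $\Mod_{geom}(\Rep(G))=\Mod(\Rep(G))$ and the two mutually inverse $2$-functors of Theorem \ref{modg} become $2$-functors between $\Mod(\text{Coh}(G))$ and $\Mod(\Rep(G))$. Indeed, once geometricity is automatic, the first statement of the theorem is literally Theorem \ref{modg} with $\Mod_{geom}(\Rep(G))$ replaced by $\Mod(\Rep(G))$, and the parameterization of indecomposable exact module categories by conjugacy classes of pairs $(H,\psi)$ follows from the ``in particular'' clause of Theorem \ref{modg} together with the equivalence $\Rep(H,\psi)\simeq\Corep(\mathcal{O}(H)_\psi)$ established in Lemma \ref{modrep}. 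Note also that for finite $G$ we have $\text{Coh}_f(G)=\text{Coh}(G)$, so there is no distinction between the finite-support and full coherent-sheaf versions.

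The crux is therefore to verify that an arbitrary indecomposable exact module category $\mathcal{N}$ over $\Rep(G)$ satisfies $\Fun_{\Rep(G)}(\Rep(\{1\},1),\mathcal{N})\ne 0$. First I would invoke \cite[Theorem 3.31]{EO}, as the excerpt signals: for a finite tensor category $\C$ the assignment $\mathcal{M}\mapsto\Fun_\C(\mathcal{M}_0,\mathcal{M})$ (for a fixed exact module category $\mathcal{M}_0$) sets up a $2$-equivalence between $\Mod(\C)$ and $\Mod(\C^*_{\mathcal{M}_0})$, and moreover this functor never annihilates a nonzero module category. Concretely, applying this with $\C=\text{Coh}(G)$ and $\mathcal{M}_0=\mathcal{M}(G,1)=\Vect$, whose dual category $\text{Coh}(G)^*_{\mathcal{M}(G,1)}$ is $\Rep(G)$ by Lemma \ref{modrep}(1), one obtains that $\mathcal{M}\mapsto\Fun_{\text{Coh}(G)}(\mathcal{M}(G,1),\mathcal{M})$ is a $2$-equivalence $\Mod(\text{Coh}(G))\to\Mod(\Rep(G))$. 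Since its essential image is all of $\Mod(\Rep(G))$, every indecomposable exact module category over $\Rep(G)$ arises as $\Fun_{\text{Coh}(G)}(\mathcal{M}(G,1),\mathcal{M})$ for some indecomposable $\mathcal{M}$, and one reads off that its image under the inverse functor $\mathcal{N}\mapsto\Fun_{\Rep(G)}(\Rep(\{1\},1),\mathcal{N})$ recovers $\mathcal{M}\ne 0$; hence $\mathcal{N}$ is geometrical.

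I expect the main subtlety to lie precisely in the dualizing argument: matching the abstract $2$-equivalence of \cite[Theorem 3.31]{EO} with the concrete functors of Theorem \ref{modg} requires identifying $\Rep(G)$ with the dual $\text{Coh}(G)^*_{\mathcal{M}(G,1)}$ and $\text{Coh}(G)$ with $\Rep(G)^*_{\Rep(\{1\},1)}$ as tensor categories (Lemmas \ref{modrep}(1) and \ref{modfun1}), and then checking that the two candidate functors are genuinely inverse to each other rather than merely inverse up to reindexing. The finiteness of $G$ enters essentially here, since \cite[Theorem 3.31]{EO} is a statement about finite tensor categories and uses that $\text{Coh}(G)$ and $\Rep(G)$ are finite; for infinite $G$ the functor $\Fun_{\Rep(G)}(\Rep(\{1\},1),-)$ genuinely kills nongeometrical module categories, as Remark \ref{notall} illustrates. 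Once the dual-category identifications are in place and \cite[Theorem 3.31]{EO} is applied, the parameterization by conjugacy classes of pairs $(H,\psi)$ is immediate from Theorem \ref{modg}, and the theorem follows.
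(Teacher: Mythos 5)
Your proposal is correct and follows essentially the same route as the paper: the paper's entire justification is the sentence preceding the theorem, which invokes \cite[Theorem 3.31]{EO} to conclude that for finite $G$ every exact module category over $\Rep(G)$ is geometrical, and then reads off the statement from Theorem \ref{modg} together with the identifications of Lemmas \ref{modrep} and \ref{modfun1}. Your write-up simply makes explicit the dualization bookkeeping that the paper leaves implicit.
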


\begin{remark}\label{modgtf}
More generally, the equivalence classes of indecomposable exact
module categories over $\C(G,H,\omega,\psi)$ are
parameterized by the conjugacy classes of pairs $(H',\psi')$, where
$H'$ is a group subscheme of $G$ and $\psi'\in
C^2(H',\mathbb{G}_m)$ satisfies $d\psi'=\omega_{|H'}$.
\end{remark}

\subsection{Twists for $k[G]$.} By \cite{AEGN}, there is a bijection between nondegenerate
twists for $k[G]$ and nondegenerate twists for $\mathcal{O}(G)$. Hence, as a consequence of Theorem \ref{modgfin}, we
deduce the following strengthening of Corollary \ref{twists}.

\begin{corollary}\label{twistsfinite}
Let $G$ be a finite group scheme over $k$. The following four sets
are in canonical bijection one with the other:

1) The set of equivalence classes of tensor structures
on the forgetful functor on $\Rep(G)$.

2) The set of gauge equivalence classes of twists for $k[G]$.

3) The set of conjugacy classes of pairs $(H,\psi)$, where
$H$ is a closed group subscheme of $G$ and $\psi:H\times H\to
\mathbb{G}_m$ is a nondegenerate $2-$cocycle.

4) The set of conjugacy classes of pairs $(H,J)$, where $H$
is a closed group subscheme of $G$ and $J$ is a nondegenerate twist
for $k[H]$. \qed
\end{corollary}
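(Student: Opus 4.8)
The plan is to establish the canonical bijection among the four sets by composing bijections that are either already proved in this excerpt or imported from the literature. I would organize the argument as a cycle $(1)\leftrightarrow(3)\leftrightarrow(4)$ together with the standard identification $(1)\leftrightarrow(2)$, since each adjacent pair is the shortest possible hop. First, the equivalence of $(1)$ and $(2)$ is the classical dictionary between tensor structures on the forgetful fiber functor $\Rep(G)\to\Vect$ and gauge equivalence classes of twists (Drinfeld twists) for the dual Hopf algebra $k[G]$; this is recalled in Section \ref{repg} and needs only be cited. The substantive content is the bijection $(1)\leftrightarrow(3)$, which I would extract directly from Theorem \ref{modgfin}: semisimple module categories over $\Rep(G)$ of rank $1$ correspond to tensor structures on the forgetful functor, and by the classification these are precisely the indecomposable exact module categories $\Rep(H,\psi)$ with $\Corep(\mathcal{O}(H)_{\psi})\cong\Vect$, i.e. with $\psi$ \emph{nondegenerate} in the sense of Definition \ref{nondeg}. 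Thus $(1)\leftrightarrow(3)$ follows by specializing Theorem \ref{modgfin} to the rank-$1$ semisimple case.

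Next I would treat $(3)\leftrightarrow(4)$. Fixing the closed group subscheme $H$, the remaining task is to match nondegenerate $2$-cocycles $\psi:H\times H\to\mathbb{G}_m$ (equivalently, nondegenerate twists for $\mathcal{O}(H)=k[H]^*$) with nondegenerate twists $J$ for the group algebra $k[H]$, and to check that this matching is compatible with the conjugation (by elements of $G(k)$, or more precisely $G$-conjugacy of the pair) on both sides. The crucial input here is the result of \cite{AEGN}, cited immediately before the corollary, which furnishes a bijection between nondegenerate twists for $k[H]$ and nondegenerate twists for $\mathcal{O}(H)$; applying it fiberwise over each $H$ and observing that it is natural with respect to the $G$-action that implements conjugacy gives the desired bijection of conjugacy classes of pairs. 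I would emphasize that nondegeneracy is preserved under this correspondence precisely because \cite{AEGN} identifies nondegenerate objects on each side, so no additional verification of the $\Vect$-condition is needed.

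The final step is simply to assemble these bijections into a single canonical chain $(2)\leftrightarrow(1)\leftrightarrow(3)\leftrightarrow(4)$ and to note that each arrow is canonical and $G$-equivariant, so the composite is a well-defined bijection of sets that respects the conjugacy-class structure throughout. I expect the main obstacle to be the bookkeeping around conjugacy: one must verify that the notion of ``conjugacy class of a pair'' is transported faithfully across each bijection, in particular that conjugating $(H,\psi)$ by $g\in G$ corresponds under \cite{AEGN} to conjugating $(H,J)$ by the same $g$, so that the two classifications by conjugacy classes really line up. Since $H$ is finite the coalgebra $\mathcal{O}(H)_{\psi}$ is finite dimensional, which is what makes the simplicity (nondegeneracy) condition and the duality with $k[H]$ well-behaved; the infinite subtleties that forced the Ind/Pro machinery earlier do not intervene here, so beyond the conjugacy bookkeeping the proof is a direct citation-and-assembly argument with no hard analytic or categorical content.
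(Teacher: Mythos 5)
Your proposal is correct and follows essentially the same route as the paper: the bijection $(1)\leftrightarrow(3)$ is obtained by specializing Theorem \ref{modgfin} to rank-$1$ semisimple module categories (which is exactly how the paper strengthens Corollary \ref{twists}), $(1)\leftrightarrow(2)$ is the standard dictionary between tensor structures on the fiber functor and twists, and $(3)\leftrightarrow(4)$ is the \cite{AEGN} bijection between nondegenerate twists for $k[H]$ and for $\mathcal{O}(H)$, applied to each closed subgroup scheme $H$. The paper leaves the conjugacy bookkeeping implicit, so your attention to it is a harmless refinement rather than a divergence.
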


\begin{remark}\label{finite}
Corollary \ref{twistsfinite} was proved for etale group schemes in
\cite{Mov}, \cite{EG} and \cite{AEGN}.
\end{remark}

\subsection{Minimal twists for $k[G]$.} Recall that a twist $J$ for $k[G]$ is called \emph{minimal} if the
triangular Hopf algebra $(k[G]^J,J_{21}^{-1}J)$ is minimal, i.e., if
the left (right) tensorands of $J_{21}^{-1}J$ span $k[G]$ \cite{R}. 

Using Deligne's theorem \cite{DE}, it is shown in \cite{EG, AEGN} that a twist for a finite abstract group is minimal if and only if it is nondegenerate. In this section we show that the same holds for any finite group scheme, without using Deligne's theorem. In order to achieve it, we shall need the following result about quotients of Tannakian categories, which is of interest by itself.\footnote{We are grateful to the referee for pointing to us that this result is a special case of \cite[Proposition 1]{B}. We include our proof for the sake of completeness, and convenience of the reader.}
\begin{proposition}\label{quot}
Let $G$ be a finite group scheme over $k$, let $\C$ be a symmetric rigid tensor category over $k$, and suppose there exists a surjective \footnote{By saying that $F$ is {\em surjective} we mean that any object $X\in \C$ is isomorphic to a subquotient of $F(V)$ for some $V\in \Rep(G)$.} symmetric tensor functor $F:\Rep(G)\to \C$. Then there exists a closed group subscheme $H$ of $G$ such that $\C\cong \Rep(H)$ as symmetric rigid tensor categories, and $Forget_G\cong Forget_H\circ F$.
\end{proposition}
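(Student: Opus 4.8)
The plan is to realise $\C$ geometrically as the category of $G$-equivariant coherent sheaves on a finite $G$-scheme $Y$, and then to force $Y$ to be a homogeneous space $G/H$; the stabilizer $H$ will be the desired closed subgroup scheme and the fiber at the base point the desired fiber functor. The only inputs beyond formal nonsense will be the rigidity of $\C$, the symmetry of $F$, and the algebraic closedness of $k$ — in particular Deligne's theorem is not needed.

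First I would build the algebra. Since $\C$ and $\Rep(G)$ are finite and $F$ is a tensor functor (hence exact), $F$ admits a right adjoint $F^{R}$, which is lax symmetric monoidal; thus $\mathbf{A}:=F^{R}(\mathbf 1_{\C})$ is a commutative algebra in $\Rep(G)$. Because $F$ is strong monoidal, the projection formula gives $F^{R}(F(V))\cong V\ot\mathbf{A}$, so the monad $F^{R}F$ is $-\ot\mathbf{A}$; as $F$ is faithful, exact and surjective, the Barr--Beck theorem (see \cite[Theorem 3.17]{EO}) yields an equivalence of symmetric tensor categories $\C\cong\text{Mod}_{\mathbf{A}}(\Rep(G))$, under which $F$ becomes the free-module functor $V\mapsto V\ot\mathbf{A}$. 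Applying the fiber functor $Forget_G\colon\Rep(G)\to\Vect$ turns $\mathbf{A}$ into a finite-dimensional commutative $k$-algebra carrying a rational $G$-action, i.e.\ $\mathbf{A}=\mathcal{O}(Y)$ for a finite $G$-scheme $Y:=\Spec(Forget_G(\mathbf{A}))$. Since every object of $\C$ has finite length, this identifies
$$\C\;\cong\;\text{Coh}^{G}(Y),\qquad \ot=\ot_{\mathcal{O}(Y)},$$
the category of $G$-equivariant coherent sheaves on $Y$ with its tensor product over $\mathcal{O}(Y)$.

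Next I would use rigidity to force $Y$ to be homogeneous. In a rigid tensor category over $k$ the unit object is simple; here the unit is $\mathcal{O}(Y)$, so $\mathcal{O}(Y)$ is a simple object of $\text{Coh}^{G}(Y)$. Its $G$-equivariant coherent subsheaves are exactly the $G$-stable ideals, that is, the $G$-stable closed subschemes of $Y$; simplicity therefore says $Y$ has no proper nonempty $G$-stable closed subscheme, i.e.\ $G$ acts transitively on $Y$. As $k$ is algebraically closed and $Y$ is finite and nonempty, $Y$ has a $k$-point $y$; setting $H:=\text{Stab}_G(y)$, a closed group subscheme of $G$, transitivity identifies the orbit map as an isomorphism $G/H\xrightarrow{\cong}Y$. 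Hence $\C\cong\text{Coh}^{G}(G/H)\cong\Rep(H)$ as symmetric rigid tensor categories, the last equivalence being descent along the base point $eH$. Under these identifications $F$ is the restriction functor $V\mapsto V\ot\mathcal{O}(G/H)$, whose fiber at $eH$ is $\overline{V}$; since taking the fiber at $eH$ \emph{is} $Forget_H$, we obtain $Forget_H\circ F\cong Forget_G$, as required.

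The step I expect to be the main obstacle is the monadic identification $\C\cong\text{Mod}_{\mathbf{A}}(\Rep(G))$ — i.e.\ verifying that the Barr--Beck comparison functor is an equivalence, which is exactly where the surjectivity hypothesis on $F$ is consumed (it gives essential surjectivity) — together with the bookkeeping that the tensor, symmetric and rigid structures transported onto $\text{Coh}^{G}(Y)$ are the standard ones. Once $\C$ is pinned down as equivariant sheaves, the geometric finish (rigidity $\Rightarrow$ a single orbit, $k=\bar k$ $\Rightarrow$ a rational base point $\Rightarrow$ $Y\cong G/H$) is routine, and it is precisely here, rather than through Deligne's theorem, that the algebraic closedness of $k$ enters.
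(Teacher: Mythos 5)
Your argument is correct in outline but follows a genuinely different route from the paper's. The paper never invokes monadicity or equivariant geometry: it pushes the regular algebra forward, setting $R:=F(\mathcal{O}(G))/I$ for a maximal ideal subobject $I$ of $F(\mathcal{O}(G))$, observes that $\Hom_{\C}(\mathbf{1},R)=k$ and that $X\ot R$ is a free $R$-module for every $X\in\C$ (this is exactly where surjectivity is consumed, since $\mathcal{O}(G)$ has this property in $\Rep(G)$), and then reads off a fiber functor $L(X)=\Hom_{\C}(\mathbf{1},X\ot R)$ directly; Tannakian reconstruction gives $\C\cong\Rep(A)$ with $A\hookrightarrow k[G]$. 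Your construction is the adjoint picture: you pull the unit back, obtaining $\mathbf{A}=F^{R}(\mathbf{1})=\mathcal{O}(Y)$ upstairs in $\Rep(G)$ and choosing a $k$-point of $Y$, where the paper chooses a maximal ideal of $F(\mathcal{O}(G))$ downstairs in $\C$. What your route buys is the geometric description $\C\cong\text{Coh}^{G}(G/H)$; what it costs is three pieces of infrastructure the paper avoids entirely: (i) finiteness of $\C$ is not a hypothesis and must be extracted from surjectivity (finitely many composition factors of $F(k[G])$ and bounded Loewy length) before the existence of $F^{R}$ and the exactness of $\C$ as a $\Rep(G)$-module category can be quoted; (ii) in the Barr--Beck step the hypotheses to verify concern $F^{R}$, not $F$, and, more substantially, the module-category equivalence of \cite[Theorem 3.17]{EO} must be upgraded to a \emph{symmetric tensor} equivalence with $\ot_{\mathbf{A}}$ on the target --- this de-equivariantization formalism is standard but is real content rather than bookkeeping; (iii) the orbit theory for finite group schemes (a finite $G$-scheme with no proper nonempty $G$-stable closed subscheme is $G/\text{Stab}(y)$ for a $k$-point $y$), which is where algebraic closedness enters for you just as it enters for the paper in choosing $I$ with residue field $k$. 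All three are fillable from standard references, so the proof stands, but as written the weight rests on steps you have only named.
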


\begin{proof}
Consider the image $F(\mathcal{O}(G))$ of the commutative unital algebra object $\mathcal{O}(G)$ in $\Rep(G)$; it is a commutative unital algebra object in $\C$. Let $I\in\C$ be a maximal ideal subobject of $F(\mathcal{O}(G))$, and set $R:=F(\mathcal{O}(G))/I$. Then $R$ is a commutative unital algebra object in $\C$. Let $\Mod_{\C}(R)$ be the category of modules in $\C$ over $R$. Clearly, $R$ is a simple object in $\Mod_{\C}(R)$, so $\Hom_{\C}(\mathbf{1},R)=\Hom_{R}(R,R)=k$. 

Observe that for any $X\in \C$, there exists a finite dimensional vector space $\overline{X}$ such that $X\ot R\cong \overline{X}\ot _k R$ as modules over $R$ (i.e., $X\ot R$ is free). Indeed, this follows since $\mathcal{O}(G)\in \Rep(G)$ has this property and $F$ is surjective. Therefore, since $\Hom_{\C}(\mathbf{1},R)=k$, it follows that $\Hom_R(R,X\otimes R)=\Hom_{\C}(\mathbf{1},X\otimes R)$ canonically by Frobenius reciprocity, which implies that there is a canonical isomorphism $\Hom_{\C}(\mathbf{1},X\ot R)\ot _k R\xrightarrow{\cong}X\otimes R$. Hence the functor $$L:\C\to \Vect,\,X\mapsto \Hom_{\C}(\mathbf{1},X\ot R),$$ together with the tensor structure given by
\begin{eqnarray*}
\lefteqn{L(X\ot Y)=\Hom_{\C}(\mathbf{1},(X\ot Y)\ot R)}\\& &\xrightarrow{\cong}\Hom_{\C}(\mathbf{1},X\ot (L(Y)\ot _k R))\xrightarrow{\cong}\Hom_{\C}(\mathbf{1},(L(X)\ot _k L(Y))\ot _k R))\\
& & \xrightarrow{\cong} L(X)\ot _k L(Y), 
\end{eqnarray*}
is a fiber (= exact tensor) functor on $\C$.
But then a standard argument (see e.g., \cite{DM}) yields that $\C$ is equivalent to $\Rep(A)$ for some finite dimensional Hopf algebra $A$ over $k$, as a rigid tensor category. Hence, there exists an injective homomorphism $A\xrightarrow{1-1} k[G]$ of Hopf algebras, and the result follows.      
\end{proof}

\begin{remark}
Proposition \ref{quot} holds for any affine group scheme over $k$ (i.e., not necessarily finite). Namely, quotients of Tannakian categories are Tannakian. The proof is essentially the same, except that $\mathcal{O}(G)$ and its image under (the extension of) $F$ are Ind objects, so certain adaptations are required (see \cite[Proposition 1]{B}). 
\end{remark}

We can now state and prove the main result of this section.

\begin{proposition}\label{minond}
Let $G$ be a finite group scheme over $k$, and let $J$ be a twist for $k[G]$. Then $J$ is minimal if and only if it is nondegenerate.
\end{proposition}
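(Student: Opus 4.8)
The plan is to funnel both properties of $J$ into a single statement about a closed subgroup scheme $H\subseteq G$ extracted from the minimal part of the triangular Hopf algebra $(k[G]^J,R)$, where $R:=J_{21}^{-1}J$, and then to compare. First I would recall from \cite{R} that the linear span of the left tensorands of $R$ — equivalently, the image of the linear map $f_R:\mathcal{O}(G)\to k[G]$, $\phi\mapsto(\phi\ot\id)(R)$ — is a Hopf subalgebra $B\subseteq k[G]^J$, and that $(B,R)$ is the minimal triangular Hopf subalgebra; in particular $\dim_k B$ equals the rank of $R$. By the definition recalled just before the statement, $J$ is minimal exactly when $B=k[G]^J$, i.e. when $\dim_k B=|G|$.

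The key categorical input I would use is that twisting by $J$ is a \emph{symmetric} monoidal operation: the identity functor, equipped with the tensor structure given by $J$, is an equivalence of symmetric tensor categories between $(\Rep(G),\text{std})=(\Rep(k[G]),1\ot1)$ and $(\Rep(k[G]^J),R)$, since twisting carries the trivial $R$-matrix $1\ot1$ to $J_{21}^{-1}J=R$. I would then compose this with the restriction functor $\Rep(k[G]^J)\to\Rep(B)$ along $B\hookrightarrow k[G]^J$; this is a surjective symmetric tensor functor (symmetric because both braidings are induced by the same $R$, and surjective because $k[G]^J$ is free over its Hopf subalgebra $B$). The upshot is a surjective symmetric tensor functor $F:\Rep(G)\to\Rep(B)$, with $\Rep(B)$ carrying the symmetric braiding from the triangular structure $R$. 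Now Proposition~\ref{quot} applies and yields a closed group subscheme $H\subseteq G$ with an equivalence $\Rep(B)\cong\Rep(H)$ of symmetric rigid tensor categories compatible with the forgetful functors. Transporting the fiber functor and reconstructing, $B$ and $k[H]$ then have tensor equivalent representation categories with identified fiber functors, so $B\cong k[H]^{J'}$ as Hopf algebras for a twist $J'$ of $k[H]$; minimality of $(B,R)$ forces $R$ to have full rank $\dim_k B=|H|$ on $k[H]^{J'}$, i.e. $J'$ is \emph{nondegenerate} for $k[H]$. This recovers exactly the classification datum $(H,J')$ attached to $J$ in Corollary~\ref{twistsfinite}.

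It then remains to compare the two conditions against $H=G$. Since $\dim_k B=\dim_k k[H]^{J'}=|H|$, the minimality condition $B=k[G]^J$ reads $|H|=|G|$, i.e. $H=G$ (a closed subgroup scheme of the same order). On the other side, by Definition~\ref{nondeg} together with \cite{AEGN} and the bijection of Corollary~\ref{twistsfinite}, $J$ is nondegenerate precisely when its datum satisfies $H=G$ (equivalently, the associated twisted coalgebra is simple of full dimension $|G|$, i.e. $R$ has full rank). Hence both ``$J$ is minimal'' and ``$J$ is nondegenerate'' are equivalent to $H=G$, which gives the proposition. I expect the one genuinely nontrivial step — and the reason Proposition~\ref{quot} is invoked in place of Deligne's theorem — to be the identification of the minimal part $B$ with the twisted group algebra $k[H]^{J'}$ of an \emph{honest} closed subgroup scheme $H$: a priori $(\Rep(B),R)$ is only a symmetric rigid tensor category, which without further input could be merely super-Tannakian, but realized as a quotient of the Tannakian category $\Rep(G)$ it is genuinely Tannakian by Proposition~\ref{quot}. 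A secondary point requiring care is checking that the subgroup scheme $H$ produced by Proposition~\ref{quot} coincides with the one labeling $J$ in Corollary~\ref{twistsfinite}, which I would deduce from the coherence of the equivalences in Theorem~\ref{modgfin}.
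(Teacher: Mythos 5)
Your skeleton matches the paper's: extract the minimal triangular part $B$ of $(k[G]^J,R)$, observe that the composite $\Rep(G)\to\Rep(k[G]^J)\to\Rep(B)$ is a surjective symmetric tensor functor, apply Proposition~\ref{quot} to get $\Rep(B)\cong\Rep(H)$ for a closed subgroup scheme $H\subseteq G$ and hence $B\cong k[H]^{J'}$, and then compare everything against Corollary~\ref{twistsfinite}. However, the step you dismiss as ``a secondary point requiring care'' --- that the pair $(H,J')$ produced this way is conjugate to the pair classifying $J$ in Corollary~\ref{twistsfinite} --- is in fact the heart of the nondegenerate-implies-minimal direction, and ``the coherence of the equivalences in Theorem~\ref{modgfin}'' does not supply it. Without it your two equivalences live over two a priori different subgroups: minimality of $J$ is equivalent to $|H|=|G|$ for the Proposition~\ref{quot} subgroup, while nondegeneracy of $J$ is equivalent to $H''=G$ for the classification datum $(H'',J'')$, and nothing in your argument connects them. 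The paper closes this by noting that the inclusion $(k[H]^{J'},J'_{21}{}^{-1}J')\hookrightarrow(k[G]^J,J_{21}^{-1}J)$ of triangular Hopf algebras forces $J(J')^{-1}$ to be a \emph{symmetric} twist for $k[G]$, and then invoking \cite[Theorem 3.2]{DM} (uniqueness of the symmetric fiber functor on a Tannakian category) to conclude that a symmetric twist is gauge-trivial; hence $J$ and $J'$ are gauge equivalent and the pairs $(G,J)$ and $(H,J')$ are conjugate, so the bijection of Corollary~\ref{twistsfinite} forces $H=G$. You need this argument (or a substitute for it); any attempt to identify the two subgroups by appealing directly to the classification tends to be circular, since ``the minimal part of a nondegenerate twist on $H''$ is all of $k[H'']$'' is exactly the statement being proved.

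A second, smaller problem is the clause ``minimality of $(B,R)$ forces $R$ to have full rank $\dim_k B=|H|$ on $k[H]^{J'}$, i.e.\ $J'$ is nondegenerate for $k[H]$.'' Full rank of $R$ is the definition of \emph{minimality} of $J'$, whereas nondegeneracy (Definition~\ref{nondeg}) is simplicity of the twisted coalgebra $\mathcal{O}(H)_{\psi}$; equating the two is precisely the proposition, applied to $H$. What you actually need here is only the easy implication (minimal implies nondegenerate), which should be proved first and separately exactly as the paper does: by Corollary~\ref{twistsfinite}, $J'$ is gauge equivalent to the image of a nondegenerate twist supported on some $\overline{H}\subseteq H$, and minimality forces $\overline{H}=H$. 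With that implication in hand, $(H,J')$ becomes a legitimate classification datum, and the symmetric-twist argument above finishes the proof. The rest of your setup (surjectivity of restriction via freeness over the Hopf subalgebra, symmetry of the twisting equivalence, and the role of Proposition~\ref{quot} in ruling out the merely super-Tannakian possibility) is correct and agrees with the paper.
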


\begin{proof}
Suppose $J$ is minimal. By Corollary \ref{twistsfinite}, there exist a closed group
subscheme $\overline{H}$ of $H$ and a nondegenerate twist
$\overline{J}$ for $k[\overline{H}]$ such that the image of
$\overline{J}$ under the embedding
$k[\overline{H}]_{\overline{J}}\hookrightarrow k[H]_J$ is $J$. Since
$J$ is minimal and $\overline{H}\subseteq H$, it follows that
$\overline{H}=H$.

Conversely, suppose $J$ is nondegenerate.
Let $(A,J_{21}^{-1}J)$ be the minimal triangular Hopf subalgebra of
$(k[G]^{J},J_{21}^{-1}J)$. The restriction functor $\Rep(G)\to \Rep(A)$ is a surjective symmetric tensor functor. Thus by Proposition \ref{quot},
$\Rep(A)$ is equivalent to $\Rep(H)$, as a symmetric tensor category, for some closed group subscheme $H$ of $G$.
Now, it is a standard fact
(see e.g., \cite{G}) that such an equivalence functor gives rise to a twist $I\in
k[H]^{\ot 2}$ and an isomorphism of triangular Hopf algebras
$
(k[H]^I,I_{21}^{-1}I)\xrightarrow{\cong} (A,J_{21}^{-1}J)$.

We therefore get an injective homomorphism of triangular Hopf algebras
$(k[H]^{I},I_{21}^{-1}I)\xrightarrow{1-1} (k[G]^{J},J_{21}^{-1}J)$, which implies
that $JI^{-1}$ is a symmetric twist for $k[G]$. But by \cite[Theorem 3.2]{DM}, this implies that $JI^{-1}$ is gauge equivalent to $1\ot 1$. Therefore, the triangular Hopf algebras
$(k[G]^{JI^{-1}},I_{21}J_{21}^{-1}JI^{-1})$ and $(k[G],1\ot 1)$ are
isomorphic. In other words, $(k[G]^{I},I_{21}^{-1}I)$ and
$(k[G]^{J},J_{21}^{-1}J)$ are isomorphic as triangular Hopf algebras, i.e., the pairs $(G,J)$ and $(H,I)$ are conjugate. We thus conclude from Corollary \ref{twistsfinite} that $H=G$, and hence that $J$ is a minimal twist, as required.
\end{proof}

\subsection{The commutative case}\label{excom}

Let $A$ be a finite commutative group
scheme over $k$ and let $A^D$ be its Cartier dual (see Section \ref{gsch}). By definition,
$k[A]=\mathcal{O}(A^D)$ and $k[A^D]=\mathcal{O}(A)$. Therefore,
Corollary \ref{twistsfinite} implies the following.

\begin{proposition}\label{twistscommut}
There is a canonical isomorphism of abelian groups between the group
of gauge equivalence classes of twists for $k[A]$ and the group
$H^2(A^D,\mathbb{G}_m)$. \qed
\end{proposition}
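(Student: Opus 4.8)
The plan is to use Cartier duality to convert a twist for the group algebra $k[A]$ into a $2$-cocycle on the dual group scheme $A^D$, and then to match gauge equivalence with cohomological equivalence. Since $A$ is commutative, $k[A]=\mathcal{O}(A^D)$ is both commutative and cocommutative, and this is precisely what will allow the set of gauge equivalence classes to acquire a group structure, in contrast with the general finite situation of Corollary \ref{twistsfinite}, where only a bijection of sets is produced.

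First I would record the set-level identification. By definition a twist for $k[A]$ is an invertible element $J\in k[A]\otimes k[A]$ satisfying the Drinfeld twist equations with respect to the comultiplication of $k[A]$. Under the Cartier duality isomorphism $k[A]=\mathcal{O}(A^D)$ of Hopf algebras, these are exactly the equations defining a normalized $2$-cocycle $A^D\times A^D\to\mathbb{G}_m$ (see Section 3.2). Hence the set of twists for $k[A]$ is literally the set $Z^2(A^D,\mathbb{G}_m)$ of normalized $2$-cocycles, where an $n$-cochain means a morphism $(A^D)^n\to\mathbb{G}_m$, i.e.\ an invertible element of $\mathcal{O}(A^D)^{\otimes n}$.

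Next I would identify gauge equivalence with cohomological equivalence. A gauge transformation is given by an invertible element $g\in k[A]$ with $\varepsilon(g)=1$, sending $J$ to $(g\otimes g)\,J\,\Delta(g)^{-1}$. Viewing $g$ as a normalized $1$-cochain on $A^D$, commutativity of $k[A]$ shows that the factor $(g\otimes g)\Delta(g)^{-1}$ is exactly the coboundary $dg\in B^2(A^D,\mathbb{G}_m)$, so that the gauge-transformed twist is $J\cdot dg$. Consequently two twists are gauge equivalent if and only if they differ by a $2$-coboundary, and the gauge-trivial twists are precisely $B^2(A^D,\mathbb{G}_m)$. This yields a canonical bijection between gauge equivalence classes of twists and $H^2(A^D,\mathbb{G}_m)=Z^2(A^D,\mathbb{G}_m)/B^2(A^D,\mathbb{G}_m)$; Corollary \ref{twistsfinite} enters to guarantee that every class is accounted for (each contains an honest twist, hence a $2$-cocycle), giving surjectivity onto $H^2$.

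Finally I would check that this bijection is an isomorphism of \emph{groups}. Because $k[A]=\mathcal{O}(A^D)$ is commutative, the product $J_1J_2$ of two twists is again a twist: applying $\Delta\otimes\id$ and $\id\otimes\Delta$ and using that all tensor factors commute, the twist equation for the product reduces to the product of the two individual twist equations. Thus multiplication makes $Z^2(A^D,\mathbb{G}_m)$ an abelian group, with $B^2$ a subgroup, and it descends to the group law on gauge equivalence classes, matching the usual cohomology group structure on $H^2(A^D,\mathbb{G}_m)$. The step I expect to require the most care is precisely this last one, verifying that the pointwise product of twists both stays a twist and is compatible with gauge equivalence: this is exactly where commutativity of $A$ is indispensable, and it is what upgrades the mere set bijection of Corollary \ref{twistsfinite} to a canonical isomorphism of abelian groups.
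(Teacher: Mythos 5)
Your proposal is correct and follows the same route the paper intends: the paper's (unwritten) argument is exactly the Cartier duality identification $k[A]=\mathcal{O}(A^D)$ combined with the dictionary of Section 3.2 between Drinfeld twists for $\mathcal{O}(H)$ and normalized $2$-cocycles $H\times H\to\mathbb{G}_m$, with gauge transformations matching coboundaries and commutativity of $k[A]$ supplying the group law. The only superfluous step is your appeal to Corollary \ref{twistsfinite} for surjectivity, which is automatic since every class in $H^2(A^D,\mathbb{G}_m)$ is represented by a $2$-cocycle, i.e.\ by an honest twist.
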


\begin{corollary}\label{vanish1}
Suppose that either $A=A_{ec}$ or $A=A_{ce}$. Then the equivalence
classes of indecomposable exact module categories over $\Rep(A)$ are
in bijection with the conjugacy classes of closed group subschemes
of $A$. In particular, the trivial twist is the only twist for
$k[A]$, i.e., the forgetful functor on $\Rep(A)$ has only the trivial tensor structure.
\end{corollary}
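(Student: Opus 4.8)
The plan is to reduce both assertions to the single cohomological vanishing $H^2(B,\mathbb{G}_m)=0$, where $B$ is \emph{any} finite commutative group scheme over $k$ one of whose Cartier dual pair $B,B^D$ is etale and the other connected (so that $B$ is of type $A_{ec}$ or $A_{ce}$), and where $H^2(B,\mathbb{G}_m)$ denotes the group of gauge equivalence classes of normalized $2$-cocycles $B\times B\to\mathbb{G}_m$, equivalently of twists for $\mathcal{O}(B)$. Granting this, part (a) follows quickly: by Theorem \ref{modgfin} the indecomposable exact module categories over $\Rep(A)$ are the conjugacy classes of pairs $(H,\psi)$ with $H$ a closed group subscheme of $A$ and $\psi$ a $2$-cocycle on $H$, and since $A$ is commutative all conjugations are trivial, so these are the pairs $(H,[\psi])$ with $[\psi]\in H^2(H,\mathbb{G}_m)$. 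Every closed group subscheme $H$ of $A_{ec}$ (resp.\ $A_{ce}$) is again etale with connected dual (resp.\ connected with etale dual), because a closed subgroup scheme inherits etaleness/connectedness while $H^D$ is a quotient of $A^D$; hence $H^2(H,\mathbb{G}_m)=0$ for all such $H$, the cocycle datum disappears, and the module categories are parameterized exactly by the closed group subschemes of $A$ (i.e.\ their conjugacy classes). For part (b), Proposition \ref{twistscommut} identifies gauge equivalence classes of twists for $k[A]$ with $H^2(A^D,\mathbb{G}_m)$; as $A^D$ is of type $A_{ce}$ (resp.\ $A_{ec}$) it again satisfies the vanishing, so the only twist is the trivial one, that is, by Corollary \ref{twistsfinite}, the forgetful functor on $\Rep(A)$ carries only the trivial tensor structure.

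It remains to establish the vanishing $H^2(B,\mathbb{G}_m)=0$, which is the heart of the argument. I would use the splitting of an abelian Hopf $2$-cocycle into its symmetric and antisymmetric parts: antisymmetrization $[\psi]\mapsto\big((x,y)\mapsto\psi(x,y)\psi(y,x)^{-1}\big)$ yields an exact sequence $0\to\Ext^1(B,\mathbb{G}_m)\to H^2(B,\mathbb{G}_m)\to \mathrm{Alt}(B,\mathbb{G}_m)$, where $\mathrm{Alt}(B,\mathbb{G}_m)$ is the group of alternating bicharacters and $\Ext^1(B,\mathbb{G}_m)$ classifies the symmetric (abelian) central extensions. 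Both outer groups vanish here. The bicharacter group is the conceptual crux: a bicharacter $B\times B\to\mathbb{G}_m$ is the same as a homomorphism $B\to B^D$, and since exactly one of $B,B^D$ is etale and the other connected, there is no nonzero such homomorphism (its image would be simultaneously etale and connected, hence trivial), so $\mathrm{Alt}(B,\mathbb{G}_m)=0$. Finally $\Ext^1(B,\mathbb{G}_m)=0$ over the algebraically closed field $k$: for diagonalizable $B=D(N)$ this follows from the exact antiequivalence between groups of multiplicative type and finitely generated abelian groups, giving $\Ext^1(D(N),D(\mathbb{Z}))=\Ext^1_{\mathbb{Z}}(\mathbb{Z},N)=0$; for a constant $p$-group $B=\underline{\Gamma}$ it follows by d\'evissage from $\Ext^1(\mathbb{Z}/p^a,\mathbb{G}_m)=k^\times/(k^\times)^{p^a}=0$. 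Combining the two vanishings with the exact sequence gives $H^2(B,\mathbb{G}_m)=0$.

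I expect the main obstacle to be making the symmetric/antisymmetric decomposition precise in positive characteristic — specifically, verifying that the kernel of antisymmetrization is exactly the image of $\Ext^1(B,\mathbb{G}_m)$ with no residual torsion. A clean alternative that sidesteps this for the subtle case (part (a) with $A=A_{ce}$) is to observe that $\Rep(A_{ce})\cong\Vect(N)$ for $N=\widehat{A_{ce}}$ a finite abelian $p$-group, since $\mathcal{O}(A_{ce})=k[N]$ and comodules over the group algebra are $N$-graded vector spaces; then Ostrik's classification \cite{O} of module categories over the pointed fusion category $\Vect(N)$ gives pairs $(L\le N,\omega\in H^2(L,k^\times))$, and $H^2(L,k^\times)=0$ (the Schur multiplier of a $p$-group into the $p$-torsion-free group $k^\times$, as $\mu_p(k)=\{1\}$), so the classification reduces to subgroups $L\le N$, which correspond bijectively to closed group subschemes of $A_{ce}$. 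The constant case is likewise immediate from this Schur-multiplier vanishing applied to $H^2(H,\mathbb{G}_m)=H^2(H(k),k^\times)$. I would present the uniform bicharacter argument as the main line and retain these as independent cross-checks.
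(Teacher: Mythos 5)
Your main line is the same as the paper's: reduce everything to the vanishing $H^2(A,\mathbb{G}_m)=0$ for the two types (via Proposition \ref{twistscommut} and Theorem \ref{modgfin}), and prove it by antisymmetrization $\psi\mapsto\psi_{21}^{-1}\psi$, whose kernel is $H_s^2(A,\mathbb{G}_m)=\Ext^1(A,\mathbb{G}_m)=0$ and whose image lies in $\Hom(A,A^D)=0$ because no nontrivial homomorphism exists between an etale and a connected finite group scheme. The only divergence is that the paper cites Mumford for $\Ext^1(A,\mathbb{G}_m)=0$ for arbitrary finite commutative $A$, whereas you prove it directly in the two relevant cases and add the $\Vect(N)$/Schur-multiplier cross-check; both are correct.
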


\begin{proof}
By Proposition \ref{twistscommut}, it is sufficient to show that in
both cases $H^2(A,\mathbb{G}_m)=0$. Indeed, consider the group
homomorphism
$$H^2(A,\mathbb{G}_m)\to \Hom(A\times A,\mathbb{G}_m),\,\,\psi\mapsto
\psi_{21}^{-1}\psi;$$ it is well defined since for any two choices
$\psi_1,\psi_2$ the $2-$cocycle $\psi_1\psi_2^{-1}$ is symmetric,
and it is known that
$H_s^2(A,\mathbb{G}_m)=\Ext^1(A,\mathbb{G}_m)=0$ (see, e.g.,
\cite{Mum}). Clearly, its image is contained in the group of
skew-symmetric bilinear forms on $A$, i.e., is contained in
$\Hom(A,A^D)$. But since $\Hom(A_{ec},A_{ce})=0$, $\Hom(A,A^D)=0$,
so the above homomorphism is trivial. This means that
$H^2(A,\mathbb{G}_m)=H_s^2(A,\mathbb{G}_m)=0$, as claimed.
\end{proof}

In contrast, the cases $A=A_{ee}$ (see Remark \ref{finite}) and
$A=A_{cc}$ (see Example \ref{abelianres}) are more interesting, as
demonstrated also by the following proposition.

\begin{proposition}\label{nonvanish1}
Let $\psi\in H^2(A\times A^D,\mathbb{G}_m)$ be the class represented by the $2-$cocycle given by $\psi((a_1,f_1),(a_2,f_2))=\,<f_1,a_2>$,
where \linebreak $<\,,\,>$ denotes the canonical
pairing $<\,,\,>:A^D\times A\to \mathbb{G}_m$. Then $\psi$ is
nondegenerate, i.e., it corresponds to a nondegenerate twist for
$k[A\times A^D]$.
\end{proposition}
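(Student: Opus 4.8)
The plan is to prove nondegeneracy directly from Definition \ref{nondeg}: writing $H:=A\times A^D$, I will show that the twisted coalgebra $\O(H)_\psi$ is a \emph{simple} coalgebra, which is exactly the assertion that $\psi$ is nondegenerate. Since everything in sight is finite dimensional, it is equivalent (and more transparent) to pass to linear duals and show that the dual algebra $\O(H)_\psi^{*}$ is a \emph{simple} $k$-algebra. Over the algebraically closed field $k$ a simple finite dimensional algebra is a matrix algebra $\End_k(V)$, and then $\Corep(\O(H)_\psi)$, being equivalent to the module category of $\End_k(V)$, is equivalent to $\Vect$, as required.

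First I would fix the Cartier-duality identifications. Set $B:=\O(A)$; then $\O(A^D)=k[A]=B^{*}$ and $k[A^D]=B$, so that $\O(H)=\O(A)\ot\O(A^D)=B\ot B^{*}$ and, dually, $k[H]=\O(H)^{*}=B^{*}\ot B$ as the tensor product of the group algebras $k[A]=B^{*}$ and $k[A^D]=B$. Under these identifications the canonical pairing $\langle\,,\,\rangle$ lies in $\O(A^D)\ot\O(A)=B^{*}\ot B$ and is precisely the canonical element $\sum_i e^{i}\ot e_i$, i.e.\ the identity of $\End_k(B)=B^{*}\ot B$; it is a unit of $\O(A^D\times A)$ because it is the pullback along the pairing morphism of the coordinate on $\mathbb{G}_m$. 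Thus, viewed in $\O(H)\ot\O(H)$, the cocycle $\psi$ is the invertible element obtained by inserting this canonical pairing into the $f_1$-factor of the first tensorand and the $a_2$-factor of the second.

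The heart of the argument is to identify the twisted multiplication on $k[H]=B^{*}\ot B$. Dualizing $\Delta_\psi=\Delta\cdot\psi$ gives $(\alpha\cdot_\psi\beta)(c)=(\alpha\ot\beta)(\Delta(c)\psi)$ for $c\in\O(H)$. Because $\psi$ couples precisely the $\O(A^D)$-factor of the first tensorand with the $\O(A)$-factor of the second through the evaluation pairing, the two subalgebras $k[A]$ and $k[A^D]$ retain their original products but acquire the cross relation of the \emph{Heisenberg double} (smash product) $\mathcal{H}(B)=B\#B^{*}$, in which $B^{*}$ acts on $B$ by the coregular action; that is, I claim $\O(H)_\psi^{*}\cong\mathcal{H}(B)$. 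I would verify this by expanding $\Delta(p\ot q)$ for generators $p\in\O(A)$, $q\in\O(A^D)$, multiplying by $\psi$, and reading off the straightening relation on algebra generators. This is where essentially all of the (routine but delicate) Hopf-algebraic bookkeeping resides.

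Finally I would invoke the classical fact that for a finite dimensional Hopf algebra $B$ the Heisenberg double acts faithfully and irreducibly on $B$ itself, via left multiplication by $B$ together with the coregular action of $B^{*}$, yielding an isomorphism $\mathcal{H}(B)\xrightarrow{\cong}\End_k(B)$ by the dimension count $\dim_k\mathcal{H}(B)=(\dim_kB)^{2}=\dim_k\End_k(B)$. Hence $\O(H)_\psi^{*}$ is simple, so $\O(H)_\psi$ is a simple coalgebra and $\Corep(\O(H)_\psi)\cong\Vect$, proving that $\psi$ is nondegenerate. The main obstacle is the third step: pinning down the conventions (which Cartier-duality identifications, and the twist $\psi$ versus the induced product on $k[H]$) so that the twisted product is matched with the Heisenberg-double smash product rather than, say, the symmetric Drinfeld double. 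The asymmetry of the cocycle, pairing the first factor's $A^D$-part with the second factor's $A$-part, is exactly what forces the smash product and thereby guarantees simplicity.
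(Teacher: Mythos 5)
Your proposal is correct and follows essentially the same route as the paper: identify the cocycle with the canonical element $\sum f_i\otimes a_i$ of the evaluation pairing, recognize $(\mathcal{O}(A\times A^D)_\psi)^*$ as the Heisenberg double $B\# B^*$, and conclude simplicity (the paper cites \cite[Corollary 9.4.3]{Mon} for this last fact, whereas you sketch its proof via the faithful irreducible action on $B$ and a dimension count). Both arguments defer the same computation---that the $\psi$-twisted product really is the smash product---to a routine verification.
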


\begin{proof}
It is straightforward to verify that the corresponding
twist for $\mathcal{O}(A\times A^D)$ (which we shall also denote by $\psi$) is given by $\psi=\sum f_i\otimes a_i$, where $f_i$ and $a_i$
are dual bases of $\mathcal{O}(A)$ and $\mathcal{O}(A)^*$,
respectively. But $(\mathcal{O}(A\times A^D)_{\psi})^*$ is an
Heisenberg double, hence a simple algebra by [Mon, Corollary 9.4.3]
(see \cite{AEGN}), so $\psi$ is nondegenerate.
\end{proof}

\subsection{$p-$Lie algebras}\label{explie}

Assume that $k$ has characteristic $p>0$. In the case of $p-$Lie
algebras (see Section \ref{plie}), Theorem \ref{modgfin} and Corollary \ref{twistsfinite}
translate into the following result.

\begin{theorem}\label{modgfinlie}
Let $\mathfrak{g}$ be a finite dimensional $p-$Lie algebra over $k$.
The equivalence classes of indecomposable exact module categories
over $\Rep(\mathfrak{g})$ are parameterized by the conjugacy classes
of pairs $(\mathfrak{h},\psi)$, where $\mathfrak{h}$ is a $p-$Lie
subalgebra of $\mathfrak{g}$ and $\psi$ is a Hopf $2-$cocycle for
$u(\mathfrak{h})$. In particular, the gauge equivalence classes of
twists for $u(\mathfrak{g})$ are in bijection with conjugacy classes
of pairs $(\mathfrak{h},J)$, where $\mathfrak{h}$ is a $p-$Lie
subalgebra of $\mathfrak{g}$ and $J$ is a nondegenerate twist for
$u(\mathfrak{h})$. \qed
\end{theorem}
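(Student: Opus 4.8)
The plan is to deduce Theorem \ref{modgfinlie} directly from the general group-scheme results by invoking the equivalence of categories between infinitesimal group schemes of height $1$ and finite dimensional $p-$Lie algebras recalled in Section \ref{plie}. Concretely, a finite dimensional $p-$Lie algebra $\mathfrak{g}$ over $k$ corresponds to the finite group scheme $G$ with $k[G]=u(\mathfrak{g})$, so that $\Rep(\mathfrak{g})=\Rep(u(\mathfrak{g}))=\Rep(G)$ as symmetric tensor categories. Thus the first step is simply to observe that Theorem \ref{modgfinlie} is the specialization of Theorem \ref{modgfin} and Corollary \ref{twistsfinite} to this particular finite group scheme $G$.

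The only genuine content beyond this translation is to verify that the classifying data match up correctly under the correspondence $\mathfrak{g}\leftrightarrow G$. First I would note that closed group subschemes $H$ of $G$ correspond bijectively to $p-$Lie subalgebras $\mathfrak{h}$ of $\mathfrak{g}$: since $\mathfrak{h}\mapsto u(\mathfrak{h})$ is a functor landing in Hopf subalgebras of $u(\mathfrak{g})=k[G]$, and closed group subschemes are exactly those $H$ with $k[H]$ a Hopf subalgebra of $k[G]$ (dually, $\mathcal{O}(H)$ a quotient Hopf algebra of $\mathcal{O}(G)$ by a radical Hopf ideal), the equivalence of categories for height $1$ group schemes furnishes this bijection, compatibly with the conjugation action of $G(k)$ (equivalently, the adjoint action on $\mathfrak{g}$). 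Second, a normalized $2-$cocycle $\psi:H\times H\to \mathbb{G}_m$ is by definition the same as a Drinfeld twist for $\mathcal{O}(H)=u(\mathfrak{h})^*$, equivalently a Hopf $2-$cocycle for $u(\mathfrak{h})$; and nondegeneracy of $\psi$ in the sense of Definition \ref{nondeg} matches nondegeneracy of the corresponding twist $J$ for $u(\mathfrak{h})$ via the bijection of \cite{AEGN} already used in Corollary \ref{twistsfinite}.

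Given these identifications, the first assertion of the theorem — the parametrization of indecomposable exact module categories over $\Rep(\mathfrak{g})$ by conjugacy classes of pairs $(\mathfrak{h},\psi)$ — is immediate from Theorem \ref{modgfin} applied to $G$, and the second assertion about twists for $u(\mathfrak{g})$ is immediate from parts 2) and 4) of Corollary \ref{twistsfinite}. I expect the proof to be essentially a dictionary check with no real obstacle, which is why the statement carries a \qed rather than a proof. The one point deserving a line of care is the compatibility of the subobject correspondence with the relevant conjugation (adjoint) actions, so that \emph{conjugacy} classes on the group scheme side correspond to the appropriate conjugacy classes of pairs $(\mathfrak{h},\psi)$ on the $p-$Lie algebra side; this follows formally from the functoriality of $\mathfrak{h}\mapsto u(\mathfrak{h})$ under the action of $G(k)$.
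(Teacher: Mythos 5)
Your proposal is correct and follows essentially the same route as the paper: Theorem \ref{modgfinlie} is stated there as the direct translation of Theorem \ref{modgfin} and Corollary \ref{twistsfinite} under the equivalence between height~$1$ infinitesimal group schemes and finite dimensional $p-$Lie algebras, which is exactly the dictionary you spell out. The only point you elaborate beyond the paper is the matching of closed group subschemes with $p-$Lie subalgebras and of conjugacy classes on both sides, which is the right (and standard) verification.
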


\begin{example}\label{abelianres} (Semisimple $p-$Lie algebras) Let
$\mathfrak{t}$ be a torus, i.e., the $p-$Lie algebra of a connected
diagonalizable group scheme. Then Corollary \ref{vanish1} tells us
that the forgetful functor on $\Rep(\mathfrak{t})$ has only the trivial tensor structure,
which is in contrast with the etale case.
\end{example}

\begin{example}\label{abelianres}
Let $\mathfrak{a}$ be the $2-$dimensional abelian $p-$Lie algebra
with basis $h,x$ such that $h^p=0$ and $x^p=0$ (it is the $p-$Lie
algebra of the group scheme $\alpha_{p}\times \alpha_{p}$). Then it
is straightforward to verify that
$$J:=\exp(h\otimes x)=\sum_{i=0}^{p-1}\frac{h^i\otimes x^i}{i!}$$ is
a nondegenerate twist for $u(\mathfrak{a})$. In fact, the algebra
$(u(\mathfrak{a})_J)^*$ is isomorphic to the truncated Weyl algebra
$k[x,y]/(xy-yx-1,x^p,y^p)$, which is known to be a simple algebra (\cite[p.73]{S}).
\end{example}

\begin{example}\label{2dim}
Let $\mathfrak{g}$ be the unique $2-$dimensional nonabelian $p-$Lie
algebra with basis $x,y$ such that $[x,y]=y$, $x^p=x$ and $y^p=0$
(it is the $p-$Lie algebra of the Frobenius kernel of the group
scheme $\mathbb{G}_{m}\ltimes \mathbb{G}_{a}$ of automorphisms of the affine line 
$\mathbb{A}^1$). It is straightforward to verify that the element
$$J:=\sum_{i=0}^{p-1}\frac{x(x-1)\cdots (x-i+1)\otimes y^i}{i!}$$ is
a nondegenerate twist for $u(\mathfrak{g})$, and that
$(u(\mathfrak{g})^J,J_{21}^{-1}J)$ is a noncommutative and noncocommutative minimal triangular Hopf algebra of dimension $p^2$.
\end{example}

\begin{example}\label{frobenius} (Frobenius $p-$Lie algebras)
Let $\mathfrak{g}$ be a finite dimensional \emph{Frobenius} $p-$Lie algebra over $k$. By definition, this means that there exists a linear functional $\xi\in \mathfrak{g}^*$ such that the bilinear form $(x,y)\mapsto \xi([x,y])$ on $\mathfrak{g}$ is nondegenerate. Let $u_{\xi}(\mathfrak{g})$ be the associated reduced universal enveloping algebra, i.e., $u_{\xi}(\mathfrak{g})$ is the quotient algebra of the universal enveloping algebra $U(\mathfrak{g})$ of $\mathfrak{g}$ by the ideal generated by elements $x^p-x^{[p]}-\xi(x)^p1$, $x\in \mathfrak{g}$.
By a well known result of Premet-Skryabin \cite{PS}, $u_{\xi}(\mathfrak{g})$ is a simple algebra. Therefore any Frobenius $p-$Lie algebra possesses a nondegenerate twist.

Since there are nonsolvable Frobenius $p-$Lie algebras, we see that
there exist finite group schemes of central type which are
\emph{not} solvable (unlike in the etale case \cite{EG}). For example, the
$6-$dimensional $p-$Lie subalgebra $\mathfrak {g}$ of $\mathfrak
{gl}_3$, consisting of the matrices with zero last row, is not
solvable but is Frobenius (e.g., let $\xi\in \mathfrak {g}^*$ be
defined on the standard basis $E_{ij}$ by
$\xi(E_{12})=\xi(E_{23})=1$ and
$\xi(E_{11})=\xi(E_{13})=\xi(E_{21})=\xi(E_{22})=0$).
\end{example}

\begin{example} (The Witt $p-$Lie algebra)
Let $\mathfrak{w}$ be the $p-$dimensional $p-$Lie algebra with basis
$x_i$, $i\in \mathbb{F}_p$, such that $[x_i,x_j]=(j-i)x_{i+j}$,
$x_0^p=x_0$ and $x_i^p=0$ for $i\ne 0$. Note that for any $i\ne 0$,
the elements $x:=i^{-1}x_0$, $y:=ix_i$ span a $2-$dimensional
nonabelian $p-$Lie subalgebra of $\mathfrak{w}$. We thus obtain twists $J(i)$ for $u(\mathfrak{w})$, $i\in \mathbb{F}_p^{\times}$, as
in Example \ref{2dim}, and hence $p^p-$dimensional noncommutative
and noncocommutative triangular Hopf algebras
$u(\mathfrak{w})^{J(i)}$. (See \cite{Gr} for similar results.)
\end{example}

\section{isocategorical finite group schemes}\label{isoc}

Following \cite{EG1}, we say that two finite group schemes $G_1$,
$G_2$ over $k$ are \emph{isocategorical} if $\text{Rep}(G_1)$ is
equivalent to $\text{Rep}(G_2)$ as a tensor category (without regard
for the symmetric structure). Then $G_1$,
$G_2$ are isocategorical if and only if the Hopf algebras $k[G_1]$, $k[G_2]$ are twist equivalent \cite{EG1}.

\subsection{The construction of isocategorical finite group schemes.}
The construction of all finite group
schemes isocategorical to a fixed finite group scheme given below extends the one given in \cite{EG1} for etale groups (see also \cite{Da1,Da2}).

Let $G$ be finite group scheme over $k$, let $A$ be a commutative normal closed group subscheme of $G$, let $A^D$ be the Cartier dual of $A$ (see Section 2.1.1), and set $K:=G/A$. Let
$R:A^D\times A^D\to \mathbb{G}_m$ be a $G-$equivariant nondegenerate
skew-symmetric (i.e., $R(a,a)=0$ for all $a\in A^D$) bilinear form on $A^D$. It is known that the image of the
group homomorphism
$$H^2(A^D,\mathbb{G}_m)\to \Hom(A^D\times
A^D,\mathbb{G}_m),\,\,\psi\mapsto \psi\psi_{21}^{-1},$$ is the group
of skew-symmetric bilinear forms on $A^D$. Therefore, the form $R$ defines a class in
$H^2(A^D,\mathbb{G}_m)^K$ represented by any $2-$cocycle $J\in
Z^2(A^D,\mathbb{G}_m)$ such that $R=JJ_{21}^{-1}$.

Let
\begin{equation*}\label{tau}
\tau:H^2(A^D, \mathbb{G}_m)^K\to H^2(K,A)
\end{equation*}
be the homomorphism defined as follows. For $c\in H^2(A^D,
\mathbb{G}_m)^K$, let $J$ be a $2-$cocycle representing $c$. Then
for any $g\in K$, the $2-$cocycle $J^gJ^{-1}$ is a coboundary.
Choose a cochain $z(g): A^D\to \mathbb{G}_m$ such that
$dz(g)=J^gJ^{-1}$, and let
\begin{equation*}\label{btild}
\tilde b(g,h):=z(gh)z(g)^{-1}(z(h)^g)^{-1}.
\end{equation*}
Then for any $g,h\in K$, the function $\tilde b(g,h):A^D\to
\mathbb{G}_m$ is a group homomorphism, i.e., $\tilde b(g,h)$ belongs
to the group $A$. Thus, $\tilde b$ can be regarded as a $2-$cocycle
of $K$ with coefficients in $A$. So $\tilde b$ represents a class
$b$ in $H^2(K,A)$, which depends only on $c$ and not on the choices
we made. So we define $\tau$ by $\tau (c)=b$.

Now, let $b:=\tau (R)$, and let $\tilde b$ be any cocycle
representing $b$. For any $\gamma\in G$, let $\bar \gamma$ be the
image of $\gamma$ in $K$. Introduce a new multiplication law $*$ on
the scheme $G$ by
\begin{equation*}\label{law}
\gamma_1*\gamma_2:=\tilde
b(\bar\gamma_1,\bar\gamma_2)\gamma_1\gamma_2.
\end{equation*}
It is easy to show that this multiplication law introduces a new
group scheme structure on $G$, which (up to an isomorphism) depends
only on $b$ and not on $\tilde b$. Let us call this finite group
scheme $G_b$.

\begin{theorem}\label{th2}
The following hold:

1) The finite group scheme $G_b$ is isocategorical to $G$.

2) Any finite group scheme isocategorical to $G$ is obtained in this way. 

In particular, two isocategorical finite group schemes are necessarily isomorphic as schemes (but not as groups \cite{EG1}).
\end{theorem}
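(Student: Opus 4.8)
The plan is to translate both statements into the language of twists, via the criterion recalled above from \cite{EG1}: $G$ and a finite group scheme $G'$ are isocategorical if and only if $k[G]$ and $k[G']$ are twist equivalent. Observe first that $G_b$ and $G$ share the same underlying scheme, since the new law $\gamma_1*\gamma_2=\tilde b(\bar\gamma_1,\bar\gamma_2)\gamma_1\gamma_2$ alters only the multiplication morphism and leaves $\mathcal{O}(G_b)=\mathcal{O}(G)$ unchanged as an algebra; thus the closing assertion of the theorem will follow immediately once Part 2 is established.

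For Part 1 I would first manufacture the twist. Using the identification $k[A]=\mathcal{O}(A^D)$ of Proposition \ref{twistscommut}, a cocycle $J\in Z^2(A^D,\mathbb{G}_m)$ representing the $K$-invariant class of $R$ is a twist for $k[A]$, hence, as $k[A]$ is a Hopf subalgebra of $k[G]$, a twist for $k[G]$. The decisive point is that $R=JJ_{21}^{-1}$ lies in the commutative algebra $k[A]^{\otimes 2}$, so it commutes with $\Delta(k[A])$ automatically, and it commutes with all of $\Delta(k[G])$ if and only if it is invariant under conjugation by the grouplikes lifting $K$ --- which is exactly the $G$-equivariance of $R$. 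This commutation is precisely the condition for $k[G]^J$ to be cocommutative, so $k[G]^J=k[G']$ for a finite group scheme $G'$. To pin down $G'=G_b$ I would evaluate $\Delta^J$ on a grouplike $\gamma$, finding $\Delta^J(\gamma)=(dz(\bar\gamma))^{-1}(\gamma\otimes\gamma)$ with $z(\bar\gamma)$ the cochains trivializing $J^{\bar\gamma}J^{-1}$; relabeling by the $z(\bar\gamma)$ makes the comultiplication cocommutative, and the failure of this relabeling to be multiplicative is the $A$-valued cocycle $\tilde b(g,h)=z(gh)z(g)^{-1}(z(h)^g)^{-1}$, which resurfaces as the product of $G_b$. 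This exhibits a Hopf algebra isomorphism $k[G]^J\cong k[G_b]$, and \cite{EG1} then gives Part 1.

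For Part 2, let $G'$ be isocategorical to $G$; by \cite{EG1} there is a twist $J$ with $k[G]^J\cong k[G']$, and $k[G']$ is cocommutative. By Corollary \ref{twistsfinite} I may replace $J$ by a gauge-equivalent twist that is nondegenerate on a closed group subscheme $H\subseteq G$. Then $k[G']$ carries the triangular structure $R:=J_{21}^{-1}J$, and the minimal triangular sub-Hopf-algebra $\Image(f_R)$, where $f_R\colon\mathcal{O}(G')\to k[G']$, $\phi\mapsto(\phi\otimes\id)(R)$, equals $k[H]$ by nondegeneracy (Proposition \ref{minond}). Since $f_R$ is an algebra homomorphism and its source $\mathcal{O}(G')$ is commutative, the image $k[H]$ is commutative, forcing $H$ to be abelian; writing $A:=H$ we have $J\in k[A]^{\otimes 2}$, so $R=J_{21}^{-1}J=JJ_{21}^{-1}$ is the skew form of the construction. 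Cocommutativity of $k[G]^J$ forces $R$ to commute with $\Delta(k[G])$, i.e.\ to be invariant under diagonal $G$-conjugation; hence $A$ is normal in $G$ and $R$ is a $G$-equivariant nondegenerate skew form on $A^D$, exactly the input of the construction. Finally, reassembling the cochains $z(g)$ trivializing $J^gJ^{-1}$ into $\tilde b$ shows the extension class of $G'$ over $K=G/A$ equals that of $G$ shifted by $\tau(R)$, whence $G'\cong G_b$.

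The main obstacle lies in the reconstruction closing Part 2: having recovered $(A,R)$, one must verify that the isomorphism type of $G'$ as a group scheme is governed \emph{exactly} by $b=\tau(R)\in H^2(K,A)$, i.e.\ that transporting the twisted comultiplication of $k[G]^J$ through the relabeling by the $z(g)$ reproduces the law of $G_b$ and no other extension of $K$ by $A$. Keeping this cohomological bookkeeping consistent --- and checking that the recovered class is independent of the choice of $J$ in its class and of the trivializing cochains $z(g)$ --- is the delicate part; by contrast, the commutativity of $\Image(f_R)$ makes the identification of $A$ as abelian and normal essentially immediate.
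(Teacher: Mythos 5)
Your proposal is correct and follows essentially the same route as the paper: pass to twists via \cite{EG1}, identify the minimal triangular part of $(k[G]^J,J_{21}^{-1}J)$ as $k[A]$ for a commutative subgroup scheme $A$ (commutative because it is a quotient of the commutative $\mathcal{O}(G')$, cocommutative because it sits inside a cocommutative Hopf algebra), deduce normality of $A$ and $G$-invariance of $R$ from cocommutativity of $\Delta^J$, and then use the trivializing cochains $z(g)$ of the symmetric cocycles $J^gJ^{-1}$ to build the relabeling $\varphi(\gamma)=z(\bar\gamma)^{-1}\gamma$ whose failure of multiplicativity is exactly $\tilde b$. The one place you genuinely diverge is in arranging $J\in k[A]^{\ot 2}$: the paper first shows the minimal part is commutative and cocommutative and then invokes the argument of \cite[Proposition 3.4]{EG1} to replace $J$ by a gauge-equivalent twist supported on the minimal part, whereas you get this for free by invoking Corollary \ref{twistsfinite} together with Proposition \ref{minond} to start with $J$ nondegenerate, hence minimal, on its supporting subgroup $H=A$; this is a mild streamlining made possible by results already established earlier in the paper, at the cost of silently using that gauge-equivalent twists yield isomorphic twisted Hopf algebras.
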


\subsection{Sketch of the proof of Theorem \ref{th2}.}
Suppose that $G_1$ and $G_2$ are isocategorical, and fix a
twist $J$ for $k[G_1]$ such that $k[G_1]^J$ and $k[G_2]$ are
isomorphic as Hopf algebras (but {\em not} necessarily as triangular
Hopf algebras). Clearly, the Hopf algebra $k[G_1]^J$ is cocommutative. Set, $R^J:=J_{21}^{-1}J$.

Let $(k[G_1]^J)_{min}\subseteq k[G_1]^J$ be the minimal triangular
Hopf subalgebra of the triangular Hopf algebra $(k[G_1]^J,R^J)$
\cite{R}. Since $(k[G_1]^J)_{min}$ is isomorphic to its dual with
opposite coproduct (via $R^J$), $(k[G_1]^J)_{min}$ is cocommutative
and commutative. This implies that $(k[G_1]^{J})_{min}$ is
isomorphic to the group algebra $k[A]$ of a commutative group scheme
$A$. Therefore, there exists a twist $J'\in
(k[G_1]^{J})_{min}\otimes (k[G_1]^{J})_{min}$ such that
$R^J=R^{J'}$. But this implies (exactly as in the proof of
Proposition 3.4 in \cite{EG1}) that there exists a twist
$\widehat{J}$ for $k[G_1]$ such that $k[G_1]^J$ is isomorphic to
$k[G_1]^{\widehat{J}}$ as triangular Hopf algebras, and
$\widehat{J}\in (k[G_1]^{\widehat{J}})_{min}\otimes
(k[G_1]^{\widehat{J}})_{min}$.

Thus, we can assume, without loss of generality, that $J\in
(k[G_1]^{J})_{min}^{\otimes 2}$. This implies that
$(k[G_1]^{J})_{min}=k[A]$, where $A$ is a commutative closed group
subscheme of $G_1$, and $J\in k[A]\otimes k[A]$.

\begin{proposition} The closed group
subscheme $A$ is normal in $G_1$ (i.e., $k[A]$ is invariant under
the adjoint action $\Ad$ of $k[G_1]$ on itself), and the action of
the group scheme $K:=G_1/A$ on $A$ by conjugation preserves $R^J$.
\end{proposition}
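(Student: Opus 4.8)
The plan is to reduce both assertions to the single statement that $R^J=J_{21}^{-1}J\in k[A]\otimes k[A]$ is invariant under the diagonal adjoint action of $G_1$ on $k[G_1]\otimes k[G_1]$, i.e. $(\Ad_\gamma\otimes\Ad_\gamma)(R^J)=R^J$ for every functorial point $\gamma\in G_1(\mathfrak R)$. Granting this, the first claim follows from minimality. Since $(k[G_1]^J,R^J)$ is minimal, $R^J$ is nondegenerate, so the left tensorands of $R^J$ (the elements $(\id\otimes\phi)(R^J)$, $\phi\in k[G_1]^*$) span all of $k[A]$. Applying $\Ad_\gamma\otimes\Ad_\gamma$ and using the invariance $(\Ad_\gamma\otimes\Ad_\gamma)(R^J)=R^J$ shows that $\Ad_\gamma$ carries the set of left tensorands onto itself (as $\phi\mapsto\phi\circ\Ad_{\gamma^{-1}}$ ranges over all functionals), whence $\Ad_\gamma(k[A])=k[A]$ for every $\gamma$. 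This is exactly the stability of $k[A]$ under $\Ad$, i.e. the normality of $A$ in $G_1$.

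To establish the key invariance I would exploit the cocommutativity of $k[G_1]^J$. Writing $\Delta$ and $\Delta^J=J\Delta(\cdot)J^{-1}$ for the coproducts of $k[G_1]$ and $k[G_1]^J$, and using that $k[G_1]$ is cocommutative (so $\Delta^{op}=\Delta$), one computes $\Delta^{J,op}(h)=J_{21}\Delta(h)J_{21}^{-1}$. Hence the cocommutativity $\Delta^J=\Delta^{J,op}$ of $k[G_1]^J$ reads $J\Delta(h)J^{-1}=J_{21}\Delta(h)J_{21}^{-1}$ for all $h$, which is equivalent to
$$R^J\,\Delta(h)=\Delta(h)\,R^J\qquad\text{for all }h\in k[G_1].$$
Evaluating this on grouplike elements, where $\Delta(\gamma)=\gamma\otimes\gamma$, gives $(\gamma\otimes\gamma)R^J(\gamma^{-1}\otimes\gamma^{-1})=R^J$, which is precisely the desired $\Ad$-invariance of $R^J$.

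For the second assertion, once $A$ is normal the quotient $K=G_1/A$ acts on $A$ by conjugation; since $A$ is commutative, conjugation by $A$ is trivial on $A$, so the conjugation action of $G_1$ on $A$ — and hence on $k[A]\otimes k[A]$ — factors through $K$. The invariance $(\Ad_\gamma\otimes\Ad_\gamma)(R^J)=R^J$ then says exactly that this induced $K$-action preserves $R^J$, as claimed.

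The only non-formal points are the two dictionary translations, and I expect the main obstacle to lie in making them precise in the group-scheme (rather than abstract-group) setting. First, one must justify that the element-level identity $R^J\Delta(h)=\Delta(h)R^J$ for all $h\in k[G_1]=\mathcal{O}(G_1)^*$ is genuinely equivalent to conjugation-invariance of $R^J$ under all functorial points $\gamma\in G_1(\mathfrak R)$, so that ``carrying a spanning set onto itself'' yields stability of the subscheme $A$ and not merely of a linear span. Second, one must spell out that minimality of $(k[G_1]^J,R^J)$ forces $R^J$ to be nondegenerate, so that the leg-span is all of $k[A]$ rather than a proper Hopf subalgebra. Both are routine but deserve care.
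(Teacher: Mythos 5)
Your argument is correct and follows essentially the same route as the paper's: both derive $R^J\Delta(h)=\Delta(h)R^J$ from the cocommutativity of $k[G_1]$ and of $k[G_1]^J$, deduce $\Ad$-invariance of $R^J$, and then use that minimality forces the tensorands of $R^J$ to span $k[A]$, so that $k[A]$ is $\Ad$-stable and the induced $K$-action preserves $R^J$. The only difference is that you phrase the adjoint invariance via functorial points $\gamma\in G_1(\mathfrak R)$ while the paper works directly with the Hopf-algebraic adjoint action $\Ad(g)R^J=\varepsilon(g)R^J$ for $g\in k[G_1]$, which sidesteps the translation issue you flag at the end.
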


\begin{proof} By cocommutativity of $k[G_1]^J$, $J^{-1}\Delta(g)J=
J_{21}^{-1}\Delta(g)J_{21}$ for all $g\in k[G_1]$, hence
$\Delta(g)R^J=R^J\Delta(g)$ (here we use that $k[A]$ is commutative,
so $R^J=JJ_{21}^{-1}$). But then, using the cocommutativity of
$k[G_1]^J$ again, we get that $R^J$ is invariant under the adjoint
action of $k[G_1]$, i.e., $\Ad (g)R^J=\varepsilon(g)R^J$ for all
$g\in k[G_1]$. Since the left (and right) tensorands of $R^J$ span
$k[A]$, the result follows.
\end{proof}

We can thus view $J$ not only as a twist for $k[A]$ but also as a
$2-$cocycle of $A^D$ with values in $\mathbb{G}_m$, according to
Proposition \ref{twistscommut}. For $g\in K$ let us write $J^g$ for
the action of $g$ on $J$. Since $R^J$ is invariant under $G_1$,
$J^gJ^{-1}=J_{21}^gJ_{21}^{-1}$, which implies that the $2-$cocycle
$J^gJ^{-1}:A^D\times A^D\to \mathbb{G}_m$ is symmetric. Hence there
exists a cochain $z(g): A^D\to \mathbb{G}_m$ (i.e., an invertible
element in $\mathcal{O}(A^D)=k[A]$) such that $J^gJ^{-1}=dz(g)$.

Identifying $k[G_1]^J$ with $k[G_2]$, we can consider the morphism of schemes
$\varphi:G_1\to G_2$, $\varphi(\gamma)=z(\gamma)^{-1}\gamma$ (where by $z(\gamma)$ we mean $z(\gamma A)$). Then
$\varphi$ is bijective (with inverse $\varphi^{-1}(\gamma)=z(\gamma)\gamma$).

Finally, it is obvious from the definition of $\varphi$ that
$$
\varphi(\gamma_1)\varphi(\gamma_2)=\tilde
b(\bar\gamma_1,\bar\gamma_2) \varphi(\gamma_1\gamma_2),
$$
where $\displaystyle{\tilde b(g,h)=z(gh)/z(g)z(h)^g\in
k[A]^\times}$. Furthermore, the morphism $\tilde b:=\tilde b(g,h):A^D\to
\mathbb{G}_m$ is a group homomorphism, i.e., $\tilde b\in A$, and it
is clear that $\tilde b$ is a $2-$cocycle of $K$ with coefficients
in $A$. Let $b$ be the cohomology class of $\tilde b$ in $H^2(K,A)$.
We have shown that
$$
\varphi(\gamma_1*\gamma_2)=\varphi(\gamma_1)\varphi(\gamma_2),
$$
i.e., that $\varphi$ is an isomorphism of group schemes $(G_1)_b\to G_2$. This
completes the proof of Part 2 of Theorem \ref{th2}, since by the
definition of $b$ we have $b=\tau (\overline{R^J})$.

Finally, Part 1 is essentially obvious from the above. Namely, if
$G$ is a finite group scheme, $A$ its commutative normal closed
group subscheme, $K:=G/A$ and $b:=\tau (\bar R)\in H^2(K,A)$, then
choose a twist $J\in k[A]^{\otimes 2}$ such that $R=J_{21}^{-1}J$
and get that $k[G]^J$ is isomorphic as a Hopf algebra to $k[G_b]$
(so the group schemes $G$ and $G_b$ are isocategorical). \qed


\begin{thebibliography}{DGNO}
\bibitem
[A]{A} E. Abe, Hopf algebras. Translated from the Japanese by Hisae
Kinoshita and Hiroko Tanaka. Cambridge Tracts in Mathematics, {\bf
74}. Cambridge University Press, Cambridge-New York, 1980. xii+284
pp.

\bibitem
[AEGN]{AEGN} E. Aljadeff, P. Etingof, S. Gelaki and D. Nikshych, On
twisting of finite-dimensional Hopf algebras, {\em Journal of
Algebra} {\bf 256} (2002), 484--501.

\bibitem
[B]{B} R. Bezrukavnikov, On tensor categories attached to cells in affine Weyl groups. Representation theory of algebraic groups and quantum groups, 69--90, {\em Adv. Stud. Pure Math.}, {\bf 40}, Math. Soc. Japan, Tokyo, 2004.

\bibitem
[Da1]{Da1} A. Davydov, Galois algebras and monoidal functors between categories of representations of finite groups. {\em J. Algebra} {\bf 244} (2001), no. 1, 273--301.

\bibitem
[Da2]{Da2} A. Davydov, Twisted automorphisms of group algebras. {\em Noncommutative structures in mathematics and physics}, 131--150, K. Vlaam. Acad. Belgie Wet. Kunsten (KVAB), Brussels, 2010. 

\bibitem
[De]{DE} P. Deligne, Categories Tannakiennes, In The Grothendick
Festschrift, Vol. II, \emph{Prog. Math.} {\bf 87} (1990), 111--195.

\bibitem
[DM]{DM} P. Deligne and J. Milne, Tannakian Categories,
\emph{Lecture Notes in Mathematics}
{\bf 900}, 101--228, 1982.

\bibitem
[E]{e} P. Etingof, Tensor Categories,\\
http://www-math.mit.edu/~etingof/tenscat1.pdf.

\bibitem
[EG]{EG} P. Etingof and S. Gelaki, The classification of triangular
semisimple and cosemisimple Hopf algebras over an algebraically
closed field, \emph{Internat. Math. Res. Notices} (2000), no.
\textbf{5}, 223--234.

\bibitem
[EG1]{EG1} P. Etingof and S. Gelaki, Isocategorical groups,
\emph{Internat. Math. Res. Notices} (2001), \textbf{no. 2}, 59--76.

\bibitem
[EG2]{EG2} P. Etingof and S. Gelaki, On cotriangular Hopf algebras, \emph{Amer. J. Math.} \textbf{123} (2001), no. 4, 699--713.

\bibitem
[EG3]{EG3} P. Etingof and S. Gelaki, Quasisymmetric and unipotent tensor categories, \emph{Math. Res. Lett.} \textbf{15} (2008), no. 5, 857--866.

\bibitem
[EO]{EO} P. Etingof and V. Ostrik, Finite tensor categories,
\emph{Mosc. Math. J.} \textbf{4} (2004), no. 3, 627--654, 782--783.

\bibitem
[Ge]{G} S. Gelaki, Semisimple triangular Hopf algebras and Tannakian
categories. Arithmetic fundamental groups and noncommutative algebra
(Berkeley, CA, 1999), 497--515, \emph{Proc. Sympos. Pure Math.},
\textbf{70}, Amer. Math. Soc., Providence, RI, 2002.

\bibitem
[Gr]{Gr} C. Grunspan, Quantizations of the Witt algebra and of
simple Lie algebras in characteristic $p$, \emph{J. Algebra}
\textbf{280} (2004), no. 1, 145--161.

\bibitem
[Jac]{J} N. Jacobson, Lie algebras. \emph{Interscience Tracts in
Pure and Applied Mathematics}, No. \textbf{10} Interscience
Publishers (a division of John Wiley \& Sons), New York-London 1962
ix+331 pp.

\bibitem
[Jan]{JJ} J. Jantzen, Representations of algebraic groups. Second
edition. \emph{Mathematical Surveys and Monographs}, \textbf{107}.
American Mathematical Society, Providence, RI, 2003. xiv+576 pp.

\bibitem
[KS]{KS} M. Kashiwara and P. Schapira, Categories and sheaves. {\em Grundlehren der Mathematischen Wissenschaften} [Fundamental Principles of Mathematical Sciences], {\bf 332}. Springer-Verlag, Berlin, 2006. x+497 pp.

\bibitem
[Mon]{Mon} S. Montgomery, Hopf Algebras and Their Actions on Rings,
\emph{CBMS Regional Conference Series in Mathematics}, \textbf{82},
AMS, (1993).

\bibitem
[Mov]{Mov} M. Movshev, Twisting in group algebras of finite groups,
{\em Func. Anal. Appl.} {\bf 27} (1994), 240--244.

\bibitem
[Mum]{Mum} D. Mumford, Abelian varieties. \emph{Tata Institute of
Fundamental Research Studies in Mathematics}, No. \textbf{5}
Published for the Tata Institute of Fundamental Research, Bombay;
Oxford University Press, London 1970 viii+242 pp.

\bibitem
[Oh1]{Oh1} C. Ohn, Quantum $SL(3, \mathbb{C})Õs$ with classical representation theory, \emph{J. of Algebra} \textbf{213} (1999), 721--756.

\bibitem
[Oh2]{Oh2} C. Ohn, Quantum $SL(3, \mathbb{C})Õs$: the missing case. \emph{Hopf algebras in noncommutative geometry}, 245--255, Lecture Notes in Pure and Applied Math. vol. \textbf{239}, Dekker, 2005.

\bibitem
[Os]{O} V. Ostrik, Module categories, weak Hopf algebras and modular
invariants. \emph{Transform. Groups} \textbf{8} (2003), no. 2,
177--206.

\bibitem
[PS]{PS} A. Premet and S. Skryabin, Representations of restricted
Lie algebras and families of associative $L-$algebras. \emph{J.
Reine Angew. Math.} \textbf{507} (1999), 189--218.

\bibitem
[R]{R} D.E. Radford, Minimal quasitriangular Hopf algebras, {\em
Journal of Algebra} {\bf 157} (1993), 285--315.

\bibitem
[S]{S} H. Strade, Zur Darstellungstheorie von Lie-Algebren. (German)
[On the representation theory of Lie algebras] \emph{Abh. Math. Sem.
Univ. Hamburg} \textbf{52} (1982), 67--82.

\bibitem
[SF]{SF} H. Strade and R. Farnsteiner, Modular Lie algebras and
their representations. \emph{Monographs and Textbooks in Pure and
Applied Mathematics}, \textbf{116}. Marcel Dekker, Inc., New York,
1988. x+301 pp.

\bibitem
[W]{W} W. Waterhouse, Introduction to affine group schemes. Graduate
Texts in Mathematics, {\bf 66}. Springer-Verlag, New York-Berlin,
1979. xi+164 pp.
\end{thebibliography}
\end{document}